\theoremstyle{plain}
\newtheorem{theorem}{Theorem}[section]
\newtheorem{corollary}[theorem]{Corollary}
\newtheorem{proposition}[theorem]{Proposition}
\newtheorem{lemma}[theorem]{Lemma}
\newtheorem{conjecture}[theorem]{Conjecture}
\theoremstyle{definition}
\newtheorem{definition}[theorem]{Definition}
\theoremstyle{remark}
\newtheorem{remark}[theorem]{Remark}
\newcommand{\IGNORE}[1]{}
\newcommand{\C}{\mathbb{C}}
\newcommand{\D}{\mathbb{D}}
\newcommand{\N}{\mathbb{N}}
\newcommand{\R}{\mathbb{R}}
\renewcommand{\S}{\mathbb{S}}
\newcommand{\CRIT}{\mathrm{Crit}}
\newcommand{\DIST}{\mathrm{dist}}
\newcommand {\supp}{\mathrm{supp}}
\newcommand{\DIAM}{\mathrm{diam}}
\newcommand{\tr}{\mathrm{tr}}
\numberwithin{equation}{section}
\title{Avoidance of non-strict saddle points by blow-up}
\author[E.\ M.\ Achour]{El Mehdi Achour}
\author[U.\ L.\ Hryniewicz]{Umberto L.\ Hryniewicz}
\author[M.\ Westdickenberg]{Michael Westdickenberg}
\address{%
  El Mehdi Achour,
  Mathematics of Information Processing,
  RWTH Aachen University,
  Pont\-driesch 12-14,
  D-52062 Aachen,
  Germany}
\email{achour@mathc.rwth-aachen.de}
\address{%
  Umberto L.\ Hryniewicz,
  Geometry and Analysis,
  RWTH Aachen University,
  Pontdriesch 10-12,
  D-52062 Aachen,
  Germany}
\email{hryniewicz@mathga.rwth-aachen.de}
\address{%
  Michael Westdickenberg,
  Institute for Mathematics,
  RWTH Aachen University,
  Im Süsterfeld 2,
  D-52072 Aachen,
  Germany}
\email{mwest@instmath.rwth-aachen.de}
\date{\today}
\subjclass[2020]{%
  35K55,  
  42B37,  
  49Q20,  
  53E10,  
  53E40,  
  58J35   
}
\begin{document}

\begin{abstract}
It is an old idea to use gradient flows or time-discretized variants thereof as methods for solving minimization problems. In some applications, for example in machine learning contexts, it is important to know that for generic initial data, gradient flow trajectories do not get stuck at saddle points. There are classical results concerned with the non-degenerate situation. But if the Hessian of the objective function has a non-trivial kernel at the critical point, then these results are inconclusive in general. In this paper, we show how relevant information can be extracted by ``blowing up'' the objective function around the non-strict saddle point, i.e., by a suitable non-linear rescaling that makes the higher order geometry visible.
\end{abstract}

\maketitle

\setcounter{tocdepth}{2}

\tableofcontents

\section{Introduction}

Gradient flows are well-known as important special cases of dynamical systems. Despite a long and rich history of mathematical research, new applications and mathematical questions and insights continue to emerge. Recently, the use of gradient flows and their time-discretized siblings as building blocks in optimization procedures, most notably in machine learning contexts, has reignited interest in the convergence behavior of these flows, especially in nonconvex regimes. Since in these applications the initial data is often chosen randomly, an important issue is to identify criteria that ensure that \emph{generic} gradient flow trajectories do not ``get stuck'' at saddle points, but instead flow into (local) minima of the objective function. In this paper we are strictly concerned with finite-dimensional manifolds, and generic is meant in the measure-theoretic sense. We also discuss relevant examples in machine learning.

\subsection{Basic definitions and statement of main problem}

We work under the convention that manifolds are Hausdorff, second countable and finite-dimensional. Let $(M,g)$ be a smooth connected Riemannian manifold, and let
\begin{equation}
\label{function_f}
f:M\to\R
\end{equation}
be a smooth function. The differential of $f$ is denoted by $df$ and its set of critical points by $\CRIT(f)=\{w\in M : df(w)=0\}$. For any~$w\in M$ the unit sphere at~$w$ is denoted by $S_wM=\{v\in T_wM:g(v,v)=1\}$. 

The gradient vector field $\nabla f$ is determined by $df=g(\nabla f,\cdot)$. The flow of $-\nabla f$ will be called the gradient flow of $f$, in spite of the minus sign. This is the unique smooth map $\varphi:\mathcal{D}\to M$, where $\mathcal{D}\subset \R\times M$ is open, $\{0\}\times M \subset \mathcal{D}$, for all $w_0\in M$ the open set 
\[
J_{w_0} = \{t\in\R:(t,w_0)\in\mathcal{D}\}
\]
is the maximal interval of definition of the solution $w(t)$ of the initial value problem 
\[
\dot w=-\nabla f\circ w, \qquad w(0)=w_0
\]
and $\varphi(t,w_0)=w(t)$. We write $\varphi^t(w_0)$ instead of~$\varphi(t,w_0)$. Given $C\subset\CRIT(f)$, consider the~sets
\begin{equation*}
\begin{aligned}
W^s_{-}(C) &= \left\{w_0\in M : [0,+\infty) \subset J_{w_0}, \ \inf _{t\geq0} \ \DIST(\varphi^t(w_0),C) = 0  \right\}, \\
W^s(C) &= \left\{w_0\in M : [0,+\infty) \subset J_{w_0}, \ \lim_{t\to+\infty} \DIST(\varphi^t(w_0),C) = 0 \right\}, \\
W^s_{+}(C) &= \left\{w_0\in M : [0,+\infty) \subset J_{w_0}, \ \lim_{t\to+\infty} \varphi^t(w_0) = w_* \ \text{for some} \ w_*\in C \right\}.
\end{aligned}
\end{equation*}
We call the sets $W^s_-(C)$, $W^s(C)$ and $W^s_+(C)$ the \textbf{weak stable}, \textbf{stable} and \textbf{strong stable} sets of $C$, respectively. There are inclusions
\[
C\subset W_+^s(C)\subset W^s(C)\subset W^s_-(C).
\]
If $w_*$ is an isolated point of $\CRIT(f)$ and $C=\{w_*\}$ then all these sets coincide: $W_+^s(\{w_*\})= W^s(\{w_*\})= W^s_-(\{w_*\})$.

We say that \textbf{$f$ attains its value at $w_*\in M$ with order $0\leq k\leq\infty$} if for every smooth curve $\gamma:(-1,1)\to M$ satisfying $\gamma(0)=w_*$, all derivatives up to order $k$ of $t\mapsto f(\gamma(t))-f(w_*)$ vanish at $t=0$. Equivalently, in any choice of local coordinates around $w_*$ all partial derivatives of $f-f(w_*)$ of order $\leq k$ vanish at $w_*$. In particular, $w_*$ is a critical point precisely when $f$ attains its value at~$w_*$ with order~$1$. If the value of $f$ at $w_*$ is attained with order $k\geq j$, then it is also attained with order~$j$.

If $f$ attains its value at $w_*$ with order $k-1$, then there is a well-defined homogeneous polynomial $P:T_{w_*}M \to \R$ of order $k$ given by $P(v) = \frac{d^{k}}{dt^{k}}f\circ\gamma(t)|_{t=0}$ where $\gamma:(-1,1)\to M$ is any smooth curve satisfying $\dot\gamma(0)=v$. The value $P(v)$ does not depend on the choice of~$\gamma$, and
\begin{equation}
\label{expansion_f_defn_P}
f(\exp(v)) = f(w_*) + P(v) + \varepsilon(v), \quad \varepsilon(v) = O(\|v\|_g^{k+1}) \ \text{as} \ v\in T_{w_*}M, \ v\to0.
\end{equation}
holds, where $\|v\|_g = \sqrt{g(v,v)}$ is the induced norm and $\exp$ is the exponential map of $(M,g)$. In fact, $P$ is characterized by~\eqref{expansion_f_defn_P}. Moreover, $P(v)=Q(v,\dots,v)$ for some symmetric $\R$-valued $k$-linear form $Q$ on $T_{w_*}M$. If $k=1$ then $Q=P$ is the differential of $f$ at~$w_*$. If $k=2$ then $Q$ is the \textbf{Hessian} of $f$ at $w_*$, and will be denoted by $D^2f(w_*)$. 

Let $w_*\in\CRIT(f)$. The \textbf{Morse index} $\mu(w_*)\in\{0,\dots,d=\dim M\}$ is defined as the maximal dimension of a linear subspace $V\subset T_{w_*}M$ where $D^2f(w_*)$ is negative-definite, i.e. $D^2f(w_*)(v,v)<0$ holds for all $v \in V \setminus\{0\}$. If $g$ is used to represent $D^2f(w_*)$ as a $g$-symmetric linear map $\nabla^2f(w_*):T_{w_*}M\to T_{w_*}M$ by $g(\nabla^2f(w_*)\cdot,\cdot)=D^2f(w_*)(\cdot,\cdot)$, then $\mu(w_*)$ is the number of negative eigenvalues of $\nabla^2f(w_*)$ counted with multiplicities. We say that $w_*$ is a \textbf{non-degenerate} critical point if $D^2f(w_*)$ is a non-degenerate bilinear form, in the sense that for each $v\in T_{w_*}M$, $v\neq0$,  there exists $v'\in T_{w_*}M$ such that $D^2f(w_*)(v,v')\neq0$. Otherwise, we say that $w_*$ is a \textbf{degenerate} critical point. The \textbf{nullity} of $w_*$ is $\nu(w_*)=\dim \ker D^2f(w_*)$, where $\ker D^2f(w_*)$ is the set of vectors $v\in T_{w_*}M$ such that $D^2f(v,v')=0$ for all $v'\in T_{w_*}M$. Note that $\ker D^2f(w_*)=\ker\nabla^2f(w_*)$, and that~$w_*$ is non-degenerate if, and only if, $\nu(w_*)=0$.

One calls $w_*\in\CRIT(f)$ a \textbf{saddle point} of~$f$ if $w_*$ is neither a local minimum nor a local maximum of~$f$. The saddle point $w_*$ is called \textbf{strict} if $\mu(w_*)>0$. Otherwise it is called a \textbf{non-strict} saddle point. Note that a non-strict saddle point is necessarily a degenerate critical point.

The general problem which motivates this paper can be stated as follows.

\bigskip

\noindent \textbf{Avoidance Problem.} Given $C\subset\CRIT(f)$, is it possible to decide whether ``most'' gradient trajectories avoid $C$? More precisely, can we decide whether $W^s_+(C)$, $W^s(C)$ or $W^s_-(C)$ have zero measure?

\bigskip

The above problem is fundamental. But stated as above, it serves only as a guideline. One does not expect to solve it in this level of generality since the dynamics of a gradient flow can be wild. Already in the case that $C$ consists of a single critical point $w_*$ not much is known, in general, about the structure of the corresponding stable sets. If~$w_*$ is a strict local maximum or minimum then the Avoidance Problem is clear. It is also clear in the non-degenerate case: $W^s_+(\{w_*\})=W^s(\{w_*\})=W^s_-(\{w_*\})$ is a smooth disk of dimension $d-\mu(w_*)$, hence, has zero measure if, and only if, $\mu(w_*)>0$. It turns out that the assumption $\mu(w_*)>0$ guarantees that $W^s(\{w_*\})$ has zero measure even when the non-degeneracy assumption is dropped. This is well-known and can be proved, for instance, with the help of the Splitting Lemma by Gromoll and Meyer~\cite{GromollMeyer}, but will also follow independently from our main results. The study of the much harder case of a non-strict saddle point is, of course, the main goal of our paper.


\subsection{Main results}

We start by defining the appropriate class of saddle points we work with.

\begin{definition}
\label{defn_weak_saddle}
A saddle point $w_* \in \CRIT(f)$ is called \textbf{weakly strict} if:
\begin{itemize}
\item $f$ does not attain its value at $w_*$ with infinite order.
\item Let $2\leq k<\infty$ be the largest integer such that $f$ attains its value at $w_*$ with order $k-1$, and let $P:T_{w_*}M\to\R$ be the homogeneous polynomial of degree $k$ so that $f(\exp(v))=f(w_*)+P(v)+\varepsilon(v)$, where $\varepsilon(v)=O(\|v\|_g^{k+1})$ as $v\to0$. Let $p$ be the restriction of $P$ to $S_{w_*}M$. Every $z \in \CRIT(p)$ satisfying $p(z)\geq0$ has positive Morse index.
\end{itemize}
A weakly strict saddle point $w_*$ is \textbf{tamed} if $\CRIT(p)$ consists of isolated points.
\end{definition}

The motivation for the definition above is that many non-strict saddle points are weakly strict. For an easy and abundant source of examples, let $d\geq2$, $k\geq 3$ and consider a homogeneous polynomial $P$ of degree $k$ in $\R^d$ that attains positive and negative values, and such that every critical point of $p=P|_{\S^{d-1}}$ in $p^{-1}([0,+\infty))$ has positive Morse index. Let $Q:\R^d\to\R$ be smooth, $m>k$ be even and $c\in\R$. Then $0\in\R^d$ is a non-strict saddle point of $K(x)=c+P(x)+\|x\|^{m}Q(x)$ that is weakly strict; here $\|x\|$ denotes the Euclidean norm of $x$ in $\R^d$. If $P$ is so that $p$ is Morse and $p^{-1}([0,+\infty))$ contains no local minima then $0$ is a tamed weakly strict saddle point of $K$. A direct calculation shows that $(x,y,z)\mapsto\frac{1}{2}(xyz-1)^2$ fits this situation. Another generous and important source of examples in the context of linear neural networks is given by Proposition~\ref{prop_lnn_app} below.

\begin{remark}
Strict saddle points are always weakly strict. In fact, in the above notation, $k=2$ when $w_*$ is a strict saddle point and the Hessian $P=D^2f(w_*)$ is a quadratic form that attains negative values. Represent $P(v)=g(Av,v)$ for a linear $g$-symmetric operator $A:T_{w_*}M\to T_{w_*}M$. Then $\CRIT(p)$ consists precisely of the eigenvectors of $A$ normalized to lie on $S_{w_*}M$, the critical values being the corresponding eigenvalues. In particular, $\lambda_-=\inf p <0$ is an eigenvalue. If $v_*\in\CRIT(p)$ and $p(v_*)\geq0>\lambda_-$, then the Hessian of $p$ at $v_*$, being precisely $P|_{(\R v_*)^\bot}$ and represented by $A|_{(\R v_*)^\bot}$, has $\lambda_-$ as an eigenvalue. The spectral theorem for symmetric linear maps was used here. Hence, $v_*$ has positive Morse index as a critical point of $p$ and the strict saddle point $w_*$ checks the requirements of Definition~\ref{defn_weak_saddle}. 
\end{remark}

Our first result reads as follows.

\begin{theorem}
\label{main_thm_larger_sets}
Let $C \subset \CRIT(f)$ be such that every $w_* \in C$ is a tamed weakly strict saddle point. Then $W^s_+(C)$ has zero measure.
\end{theorem}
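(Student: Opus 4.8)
The plan is to reduce the theorem to a local statement about a single tamed weakly strict saddle point $w_*$, and then to analyze the gradient flow near $w_*$ by a nonlinear rescaling (``blow-up'') adapted to the leading polynomial $P$. For the reduction: $W^s_+(C)=\bigcup_{w_*\in C}W^s_+(\{w_*\})$, and for any neighborhood $U$ of $w_*$ one has $W^s_+(\{w_*\})=\bigcup_{T\in\Q_{\GS0}}\varphi^{-T}\big(\Sigma(w_*,U)\big)$, where $\Sigma(w_*,U):=\{y\in U:\varphi^s(y)\in U\ \forall s\GS0,\ \varphi^s(y)\to w_*\}$, because a trajectory converging to $w_*$ eventually enters and stays in $U$ and because $\varphi^t(w_0)=w_*$ forces $w_0=w_*$. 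Since $\varphi^{-T}$ is a measure-preserving diffeomorphism, it suffices to show each $\Sigma(w_*,U)$ is null for $U$ small. This gives the theorem at once when $C$ is countable; for general $C$ one repeats the analysis below fiberwise over a neighborhood of $C$, exploiting that convergence to a \emph{point} (rather than merely to $C$) confers extra codimension — that fiberwise bookkeeping is a secondary, though not entirely routine, step.

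Fix $w_*$ and use geodesic normal coordinates to identify a ball around $w_*$ with a ball about $0$ in $T_{w_*}M\cong\R^d$; write $\|\cdot\|$ for the Euclidean norm and $\S^{d-1}=S_{w_*}M$ for the unit sphere. By~\eqref{expansion_f_defn_P}, $f(\exp v)-f(w_*)=P(v)+\varepsilon(v)$ with $P$ homogeneous of degree $k\GS2$ and $\varepsilon$ smooth, $\varepsilon(v)=O(\|v\|^{k+1})$; since $g_{ij}(v)=\delta_{ij}+O(\|v\|^2)$, we get $-\nabla f(v)=-\nabla P(v)-R(v)$ with $R$ smooth, $R(v)=O(\|v\|^k)$, and (Hadamard's lemma) $R(rz)=r^k\rho(r,z)$ with $\rho$ smooth up to $r=0$. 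Writing $v=rz$ with $r=\|v\|>0$, $z\in\S^{d-1}$, and using Euler's identity $\nabla P(v)=r^{k-1}\big(k\,p(z)\,z+\nabla_\S p(z)\big)$ with $p=P|_{\S^{d-1}}$ and $\nabla_\S$ the round gradient, the flow obeys $\dot r=r^{k-1}\big(-k\,p(z)+O(r)\big)$ and $\dot z=r^{k-2}\big(-\nabla_\S p(z)+O(r)\big)$. After the time change $d\sigma=r^{k-2}\,dt$ the flow is generated by the vector field
\[
X(z,r)=\Big(-\nabla_\S p(z)+O(r),\ r\big(-k\,p(z)+O(r)\big)\Big),
\]
which extends smoothly across $\{r=0\}$; the sphere $\{r=0\}$ is invariant with $X|_{r=0}=(-\nabla_\S p,0)$, the gradient flow of $p$ on $\S^{d-1}$.

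Let $w_0\in\Sigma(w_*,U)\setminus\{w_*\}$, so $r(t)>0$ for all $t$ and $r(t)\to0$ as $t\to\infty$. Then $\sigma\to\infty$ (otherwise $\sigma$ has a finite limit and, since $\tfrac{d}{d\sigma}\log r=-k\,p(z)+O(r)$ is bounded, $\log r$ stays bounded, contradicting $r\to0$). Since $f(\varphi^t(w_0))$ is non-increasing with limit $f(w_*)$, $0\LS f(\varphi^t(w_0))-f(w_*)=r(t)^k p(z(t))+O(r(t)^{k+1})$, so $p(z(t))\GS-Cr(t)\to0$. Because $w_*$ is \emph{tamed}, $\CRIT(p)$ is finite, hence $p$ has finitely many critical values; combined with $\tfrac{d}{d\sigma}p(z(\sigma))=-|\nabla_\S p(z)|^2+O(r)$ this forces $p(z(\sigma))$ to converge (an oscillation between $\liminf$ and $\limsup$ would require traversing a critical-value-free slab $\{a\LS p\LS b\}$ in the increasing direction for arbitrarily large $\sigma$, impossible once $r$ is small since there $|\nabla_\S p|\GS c>0$ and hence $\tfrac{d}{d\sigma}p<0$). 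Therefore the $\omega$-limit set of the $X$-trajectory lies in a single level set of $p$ within $\{r=0\}$, is invariant under $-\nabla_\S p$, hence lies in $\CRIT(p)$; being connected and $\CRIT(p)$ discrete, it is a single point $(z_*,0)$ with $z_*\in\CRIT(p)$ and $p(z_*)\GS0$. By weak strictness $z_*$ has positive Morse index for $p$, so $-\HESS_\S p(z_*)$ has a positive eigenvalue; as $DX(z_*,0)$ is block upper triangular with diagonal blocks $-\HESS_\S p(z_*)$ and $-k\,p(z_*)\LS0$, it too has a positive eigenvalue.

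Hence $(z_*,0)$ has a nontrivial unstable subspace, so its local center-stable manifold $W^{cs}_{\LOC}(z_*,0)$ has dimension at most $d-1$, and the $X$-trajectory, converging to $(z_*,0)$, eventually lies in it. Pushing forward by the diffeomorphism $(z,r)\mapsto rz$ on $\{r>0\}$ and then by $\exp_{w_*}$, we conclude that $\varphi^T(w_0)$ lies, for some rational $T\GS0$, in the fixed null set $\exp_{w_*}\big(\{rz:r>0,\ (z,r)\in W^{cs}_{\LOC}(z_*,0)\}\big)$. Taking the union over $T\in\Q_{\GS0}$ and over the finitely many $z_*\in\CRIT(p)$ with $p(z_*)\GS0$ exhibits $\Sigma(w_*,U)\setminus\{w_*\}$ as a subset of a null set; together with the reduction this proves the theorem. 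I expect the principal obstacle to be exactly this asymptotic analysis near $\{r=0\}$ — showing that a converging trajectory selects one blow-up equilibrium and that this equilibrium is unstable in the required directions — together with the fiberwise version of the argument needed to handle an arbitrary set $C$.
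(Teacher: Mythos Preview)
Your blow-up analysis is essentially the paper's: you pass to polar coordinates, rescale time by $r^{k-2}$ to get a smooth vector field $X$ on $[0,\rho)\times\S^{d-1}$ with $X|_{r=0}=(-\nabla_{\S}p,0)$, observe that at each equilibrium $(z_*,0)$ with $z_*\in\CRIT(p)$ the linearization of $-X$ has an unstable eigenvalue (this is exactly the paper's Lemma~\ref{lemma_C1_extension} combined with Definition~\ref{defn_weak_saddle}), and invoke a center-stable manifold. Your argument that the blown-up trajectory converges to a single $(z_*,0)$ is a bit cleaner than the paper's Lemma~\ref{lemma_simple_analysis}, which only pins the $\omega$-limit of $u(t)$ near $\CRIT(p)$; both suffice.

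The genuine gap is the reduction. You write $W^s_+(C)=\bigcup_{w_*\in C}W^s_+(\{w_*\})$ and then propose to handle an uncountable $C$ by a ``fiberwise'' argument exploiting ``extra codimension''. This is not how the paper proceeds, and it is not clear your route closes: an uncountable union of null sets need not be null, and there is no obvious fibration structure here to turn codimension into measure. The paper's fix is to prove a \emph{stronger} local statement (Theorem~\ref{thm_weak_center_stable}): there is a neighborhood $B$ of $w_*$ and a null set $W(w_*)$ such that \emph{any} trajectory that merely stays in $B$ for all $t\GS0$ --- not necessarily converging to $w_*$ --- lies in $W(w_*)$. With this in hand one takes a \emph{countable} cover $\{B(w_*^i)\}$ of $C$ (second countability), notes that a trajectory converging to any $w_*\in C$ eventually stays in some $B(w_*^i)$, and concludes $W^s_+(C)\subset\bigcup_i\bigcup_n\Phi^{-n}(W(w_*^i))$, a countable union of null sets.

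Your local analysis almost proves this stronger statement: the center-stable lemma (the paper's Lemma~\ref{lemma_measure_zero_vectorfields_manifolds}) already traps all trajectories that stay near $(z_*,0)$, not only those converging to it; and the paper's observation that $-DX(0,z_*)$ has an unstable eigenvalue for \emph{every} $z_*\in\CRIT(p)$ (if $p(z_*)<0$ take $-kp(z_*)>0$; if $p(z_*)\GS0$ use weak strictness) removes the need for your step $p(z_*)\GS0$, which is the only place you use $r\to0$. With those two adjustments your blow-up argument upgrades to the paper's Theorem~\ref{thm_weak_center_stable}, and the countable-cover reduction replaces the vague fiberwise step.
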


\begin{corollary}
\label{cor_main}
If $w_*$ is a tamed weakly strict saddle point, and is isolated as a critical point, then $W^s_-(\{w_*\})$ has zero measure.
\end{corollary}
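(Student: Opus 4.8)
The plan is to apply Theorem~\ref{main_thm_larger_sets} to the singleton $C=\{w_*\}$ and then upgrade its conclusion from the strong stable set to the weak stable set, using that $w_*$ is an isolated critical point. Since $w_*$ is a tamed weakly strict saddle point, Theorem~\ref{main_thm_larger_sets} already gives that $W^s_+(\{w_*\})$ has zero measure. Because $C$ is a singleton we have $W^s(\{w_*\})=W^s_+(\{w_*\})$, and $W^s_+(\{w_*\})\subseteq W^s_-(\{w_*\})$ is automatic, so everything reduces to the reverse inclusion $W^s_-(\{w_*\})\subseteq W^s(\{w_*\})$: a gradient trajectory that merely accumulates at the isolated critical point $w_*$ must in fact converge to it. This is the coincidence of the three stable sets at an isolated critical point recorded in the introduction, and I sketch its (standard) proof.

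Fix $w_0\in W^s_-(\{w_*\})$, so $[0,+\infty)\subset J_{w_0}$ and $\DIST(\varphi^{s_n}(w_0),w_*)\to0$ for some $s_n\geq0$. If $(s_n)$ has a bounded subsequence, pass to it and assume $s_n\to s_*<\infty$; continuity gives $\varphi^{s_*}(w_0)=w_*$, so by uniqueness of solutions the trajectory through $w_0$ is constant equal to $w_*$, whence $w_0=w_*$ and the claim is trivial. So assume $s_n\to+\infty$. The function $t\mapsto f(\varphi^t(w_0))$ is non-increasing with derivative $-\|\nabla f(\varphi^t(w_0))\|_g^2$, hence decreases to a limit $\ell\leq f(w_0)$; evaluating along $s_n$ and using continuity of $f$ forces $\ell=f(w_*)>-\infty$, so that
\[
\int_0^{+\infty}\|\nabla f(\varphi^t(w_0))\|_g^2\,dt=f(w_0)-f(w_*)<+\infty .
\]

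Suppose for contradiction that $L:=\limsup_{t\to+\infty}\DIST(\varphi^t(w_0),w_*)>0$, and fix $\rho\in(0,L)$ small enough that the closed annulus $A_\rho=\{w\in M:\rho/2\leq\DIST(w,w_*)\leq\rho\}$ is compact and contains no critical point of $f$ — possible because $w_*$ is isolated in $\CRIT(f)$ — and set $\delta_\rho=\min_{A_\rho}\|\nabla f\|_g>0$. Since $\DIST(\varphi^t(w_0),w_*)$ drops below $\rho/2$ along the times $s_n$ and rises above $\rho$ (as $\limsup=L>\rho$) at arbitrarily large times, one extracts infinitely many pairwise disjoint intervals $[a_j,b_j]$ on each of which $\varphi^t(w_0)$ stays in $A_\rho$ while moving from the $(\rho/2)$-sphere around $w_*$ to the $\rho$-sphere, so the arclength over $[a_j,b_j]$ is at least $\rho/2$ and hence
\[
\int_{a_j}^{b_j}\|\nabla f(\varphi^t(w_0))\|_g^2\,dt\geq\delta_\rho\int_{a_j}^{b_j}\|\nabla f(\varphi^t(w_0))\|_g\,dt\geq\frac{\delta_\rho\rho}{2} .
\]
Summing over $j$ contradicts the finiteness of the total dissipation, so $L=0$; thus $\varphi^t(w_0)\to w_*$, i.e. $w_0\in W^s(\{w_*\})$, which finishes the proof. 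There is no genuine obstacle here: the only delicate points are the exclusion of $\ell=-\infty$ — exactly where membership in $W^s_-$ is used — and the uniform lower bound $\delta_\rho$ on $\|\nabla f\|_g$ over annuli, which uses that $w_*$ is an isolated critical point; alternatively, one can bypass the last two paragraphs by directly invoking the coincidence $W^s_-(\{w_*\})=W^s_+(\{w_*\})$ stated in the introduction.
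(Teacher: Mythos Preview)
Your proof is correct and follows the same route the paper implicitly takes: apply Theorem~\ref{main_thm_larger_sets} to $C=\{w_*\}$ and then use the coincidence $W^s_-(\{w_*\})=W^s(\{w_*\})=W^s_+(\{w_*\})$ for isolated critical points, which the paper simply asserts in the introduction without proof. Your added value is a clean self-contained argument for that coincidence via the finite-dissipation/annulus-crossing contradiction, which is standard but worth having on record.
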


We shall prove Theorem~\ref{main_thm_larger_sets} as a consequence of the following result.

\begin{theorem}
\label{thm_weak_center_stable}
Assume that $x_0 \in \CRIT(f)$ is a tamed weakly strict saddle point. There exists a set $W(x_0)\subset M$ with the following properties.
\begin{itemize}
\item[(a)] For some open neighborhood $B$ of $x_0$, if $w_0$ is such that $\varphi^t(w_0)$ is defined and satisfies $\varphi^t(w_0) \in B$ for all $t\geq 0$, then $w_0 \in W(x_0)$.
\item[(b)] $W(x_0)$ has zero measure.
\end{itemize}
\end{theorem}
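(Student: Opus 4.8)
The plan is to pass to a local model obtained by \emph{blowing up} $f$ at $x_0$, and then to exhibit the orbits that remain near $x_0$ as lying on finitely (or at worst countably) many center-stable manifolds of positive codimension. \textbf{Step 1 (normal coordinates and blow-up).} Normalize $f(x_0)=0$ and work in a normal-coordinate chart $\beta_0\colon\{\|v\|<\delta\}\to M$ given by $\exp$; there $f=P+\varepsilon$, with $P$ the homogeneous polynomial of degree $k$ from Definition~\ref{defn_weak_saddle}, $\varepsilon(v)=O(\|v\|^{k+1})$, and $g=\mathrm{id}+O(\|v\|^2)$, so the gradient flow of $f$ reads $\dot v=-\nabla_{\mathrm{Eucl}}P(v)+O(\|v\|^{k})$. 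Pass to polar coordinates $v=rz$ on the manifold-with-boundary $N:=[0,\delta)\times S^{d-1}$ and divide the resulting vector field by $r^{k-2}$ (a genuine time change when $k\geq3$, nothing when $k=2$). Invoking Hadamard's lemma to see that the remainder stays smooth across $r=0$, one obtains a smooth vector field
\[
\mathcal F(r,z)=\bigl(\,-r\,(k\,p(z)+r\,a(r,z))\,,\ -\nabla_{S^{d-1}}p(z)+r\,b(r,z)\,\bigr)
\]
on $N$, tangent to $\partial N=\{0\}\times S^{d-1}$, whose restriction to $\partial N$ is the negative gradient flow of $p=P|_{S^{d-1}}$ and whose restriction to $N\setminus\partial N$ is conjugate, via $\beta_0$ and the time change, to the gradient flow of $f$ on the punctured chart. (That the time change does not truncate the forward interval of existence of the orbits in question is routine: orbits staying in $\{r<\delta'\}$ with $\delta'<\delta$ stay in a compact subset of $N$.) Write $\Psi$ for the flow of $\mathcal F$.

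\textbf{Step 2 (every relevant equilibrium of $\mathcal F$ is a strict saddle).} The equilibria of $\mathcal F$ on $\partial N$ are exactly $\{0\}\times\CRIT(p)$, a finite set because $x_0$ is tamed. At $(0,z_c)$ the linearization block-decomposes, the $r$-block being multiplication by $-k\,p(z_c)$ and the $S^{d-1}$-block being $-\HESS_{S^{d-1}}p(z_c)$. If $p(z_c)<0$ the first block is already a positive eigenvalue; if $p(z_c)\geq0$ then, by weak strictness, $z_c$ has positive Morse index as a critical point of $p$, so $-\HESS_{S^{d-1}}p(z_c)$ has a positive eigenvalue. In either case $(0,z_c)$ has an unstable direction, hence a local center-stable manifold $W^{cs}_{\mathrm{loc}}(0,z_c)\subset N$ of dimension $\leq d-1$, which contains every point whose forward $\Psi$-orbit remains in a small neighborhood of $(0,z_c)$. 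The interior equilibria of $\mathcal F$ are the critical points $v$ of $f$ in the punctured chart; along any sequence of them with $v_n\to x_0$ one has $\nabla_{\mathrm{Eucl}}P(v_n/\|v_n\|)=O(\|v_n\|)\to0$, so a subsequence of $v_n/\|v_n\|$ converges to some $z_c\in\CRIT(p)$ with $p(z_c)=0$, again a strict saddle of $p$; since $\HESS f(v)=\|v\|^{k-2}\bigl(\HESS P(v/\|v\|)+O(\|v\|)\bigr)$ and $\HESS P(z_c)|_{T_{z_c}S^{d-1}}=\HESS_{S^{d-1}}p(z_c)$ (using $\nabla_{\mathrm{Eucl}}P(z_c)=0$), it follows that $\HESS f(v_n)$ has a negative eigenvalue for large $n$. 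Thus there is $\delta'>0$ so that every critical point of $f$ in $\{0<\|v\|<\delta'\}$ is a \emph{strict} saddle, for which the theorem is elementary (the forward-locally-bounded orbits lie on a center-stable manifold of dimension $<d$).

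\textbf{Step 3 (measure estimate and construction of $W(x_0)$).} Shrink $B$, equivalently $\delta'<\delta$, and put $W'=\{q\in N:\ \Psi^s(q)\in\{r<\delta'\}\ \text{for all }s\geq0\}$. The $\omega$-limit set of such an orbit is a compact connected invariant subset of the equilibrium set of $\mathcal F$ in $\{r\leq\delta'\}$ --- finitely many boundary equilibria (strict saddles) together with critical points of $f$ (strict saddles by Step 2). Using $f$ on $\{r>0\}$ and $p$ on $\partial N$ as Lyapunov functions, one shows such a forward orbit eventually enters, and then stays inside, an arbitrarily small neighborhood of the equilibrium set; combined with Step 2 this puts $\Psi^{n}(q)$, for some $n\in\N$, on one of a fixed finite family of manifolds of dimension $<d$. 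Hence $W'\subset\bigcup_{n\in\N}\Psi^{-n}(Z)$ for a null set $Z\subset N$, so $W'$ is null. Taking $W(x_0)=\{x_0\}\cup\beta_0\bigl(W'\cap\{r>0\}\bigr)$ yields a null set, giving (b); and if $w_0\neq x_0$ satisfies $\varphi^t(w_0)\in B$ for all $t\geq0$ then $\beta_0^{-1}(w_0)\in W'$ --- here one uses that a non-stationary gradient trajectory cannot reach $x_0$ in finite time, so the lifted orbit stays in the punctured chart --- hence $w_0\in W(x_0)$, giving (a).

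\textbf{Main obstacle.} The subtle point is the asymptotic control in Step 3 near $\partial N$: the natural Lyapunov functions are not strict there ($f$ is constant on $\partial N$, and $p$ degenerates in a thin cusp around each boundary equilibrium), and --- as $\tfrac12(xyz-1)^2$ illustrates --- the critical set of $f$ near a tamed weakly strict saddle need not be discrete, so an orbit could in principle converge to a positive-dimensional critical submanifold rather than to a single point. When $x_0$ is isolated in $\CRIT(f)$ (in particular in Corollary~\ref{cor_main}) this difficulty is absent and the argument is clean; in general one has to treat the critical submanifolds of $f$ near $x_0$ as well, which are built from strict saddles and can be handled by the same center-stable philosophy together with a compactness/covering argument --- and it is precisely here that the tamed weakly strict hypotheses are used in full.
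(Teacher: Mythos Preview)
Your blow-up (Step~1) and the linearization at the boundary equilibria $(0,z_c)$ (first half of Step~2) match the paper's construction (Lemma~\ref{lemma_extension_of_gradient}) essentially verbatim. The gap is in Step~3, and you identify it yourself in your ``Main obstacle'' paragraph.

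The sentence ``using $f$ on $\{r>0\}$ and $p$ on $\partial N$ as Lyapunov functions, one shows such a forward orbit eventually enters, and then stays inside, an arbitrarily small neighborhood of the equilibrium set'' is not justified and does not work as written. The function $f=r^kp+O(r^{k+1})$ collapses to $0$ uniformly as $r\to0$, so it carries no information about where on $\partial N$ the orbit accumulates; there is no mechanism to ``hand off'' to $p$. Moreover $\mathcal F$ is not a gradient flow on $N$, so it is not automatic that the $\omega$-limit set of an orbit in $\{0<r<\delta'\}$ consists of equilibria---it could a priori contain entire gradient trajectories of $p$ on $\partial N$, or straddle $\partial N$ and a positive-dimensional piece of $\CRIT(f)\cap\{r>0\}$. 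This is what forces you into the second half of Step~2 (interior critical points are strict), and then, as you admit, into an unspecified ``compactness/covering argument'' for non-isolated strict saddles.

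The paper sidesteps all of this with Lemma~\ref{lemma_simple_analysis}, a direct perturbation argument that replaces your Lyapunov reasoning. Since the $u$-component of $-\mathcal F$ equals $-\nabla p(u)+O(r)$, the curve $u(t)$ is a $C^0$-small perturbation of a gradient trajectory of $p$ on $S^{d-1}$, uniformly once $r$ stays below $R$. Trapping $u(t)$ between nearby regular level sets of $p$ and using a uniform lower bound on $\|\nabla p\|$ away from $\CRIT(p)$, one shows directly that $\omega(u(\cdot))$ lies in the $\delta$-neighborhood of $\CRIT(p)$ for $R$ small. Tamedness makes $\CRIT(p)$ finite, so the disjoint center-stable balls $B(u_*)$ around the boundary equilibria can be chosen to cover $[0,R_0]\times\{u:\DIST(u,\CRIT(p))<2\delta\}$; hence every orbit in $(0,R]\times S^{d-1}$ eventually enters and remains in a single $B(u_*)$, and lands in the corresponding measure-zero set. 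The interior critical points of $f$ never appear in the argument, and the difficulty you raise about positive-dimensional critical sets simply does not arise.
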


\begin{proof}
[Proof of Theorem~\ref{main_thm_larger_sets}]
The proof consists of an elementary covering argument combined with Theorem~\ref{thm_weak_center_stable}. Denote $E=W^s_+(C)$, for simplicity. For each $t\in \R$ the map $\varphi^t$ is defined on the open set $\mathcal{D}_t=\{w\in M:(t,w)\in\mathcal{D}\}$. Let $\Phi = \varphi^1$, $U = \mathcal{D}_1$. The $n$-fold iteration $\Phi^n$ of the map $\Phi$ is defined on an open set $U_n$ defined inductively as $$ U_1=U \qquad U_n = \Phi^{-1}(U_{n-1}) = \{ w \in U : \Phi(w) \in U_{n-1}\}. $$
Note that $\Phi^n$ defines a diffeomorphism of $U_n$ onto the open set $\Phi^n(U_n)$. The pre-image of a set $A$ by $\Phi^n$ is denoted by $\Phi^{-n}(A) = \{ w\in U_n : \Phi^n(w)\in A\}$. Since $\Phi^n$ is a diffeomorphism, if $A$ has Lebesgue measure equal to zero then so does $\Phi^{-n}(A)$.

For each $w_* \in C$ let $W(w_*)$ be the set associated to $w_*$ given by Theorem~\ref{thm_weak_center_stable}.
According to (a) in Theorem~\ref{thm_weak_center_stable}, each $w_*$ has an open neighborhood $B(w_*)$ with the following property: if $w_0\in B(w_*)$ is such that $\varphi^t(w_0)$ is defined and belongs to $B(w_*)$ for all $t\geq 0$, then $w_0 \in W(w_*)$.
Let $\{B(w^i_*)\}_{i\in \N}$ be a countable open cover of $C$ by such balls.
Define $$ E_0 = \bigcup_i \bigcup_n \Phi^{-n}\big(W(w_*^i)\big). $$
Each $\Phi^{-n}(W(w_*^i))$ has zero measure since so does each $W(w_*^i)$.
It follows that $E_0$ has zero measure.
Let $w_0 \in E$ be arbitrary, and let $w_* = \lim_{t\to+\infty}\varphi^t(w_0) \in C$.
There exists $i\in \N$ such that $w_* \in B(w_*^i)$.
Hence we find $T>0$ such that $t\geq T \Rightarrow \varphi^t(w_0) \in B(w_*^i)$.
Hence $n\geq T \Rightarrow \Phi^n(w_0) \in W(w_*^i)$ in view of (a) in Theorem~\ref{thm_weak_center_stable}.
We have shown that $E_0 \supset E$. It follows that $E$ has zero measure.
\end{proof}

Our main result reads as follows.

\begin{theorem}
\label{main_thm_real_analytic_larger_sets}
Assume that $(M,g)$ and $f: M \to \R$ are real-analytic, and also that $f$ is non-constant. Let $C$ be a compact connected component of $\CRIT(f)$ consisting of tamed weakly strict saddle points. Then $W^s_-(C)$ has zero measure.
\end{theorem}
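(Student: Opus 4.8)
The plan is to reduce the statement to Theorem~\ref{main_thm_larger_sets} by establishing, under the real-analyticity, compactness and connectedness hypotheses, the identity $W^s_-(C)=W^s_+(C)$. Since the inclusion $W^s_+(C)\subseteq W^s_-(C)$ always holds, everything rests on proving the reverse inclusion, and for this the \L{}ojasiewicz gradient inequality is the decisive tool.

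First I would record a few preliminaries rooted in analyticity. By the pointwise \L{}ojasiewicz gradient inequality, for each $w_*\in\CRIT(f)$ there are $c>0$, $\theta\in(0,\tfrac12]$ and a neighborhood of $w_*$ on which $\|\nabla f(w)\|_g\ge c\,|f(w)-f(w_*)|^{1-\theta}$; evaluating this at a critical $w$ forces $f(w)=f(w_*)$, so $f$ is locally constant on $\CRIT(f)$ and hence constant on the connected component $C$, with common value $f_*$. Moreover $\CRIT(f)$ is locally a real-analytic set, hence locally connected, so the connected component $C$ is open in $\CRIT(f)$; being also compact, it satisfies $\DIST(C,\CRIT(f)\setminus C)>0$, so there is an open neighborhood $\mathcal{O}$ of $C$ with $\mathcal{O}\cap\CRIT(f)=C$. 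Covering the compact set $C$ by finitely many \L{}ojasiewicz neighborhoods and shrinking $\mathcal{O}$, one obtains uniform constants $c>0$, $\theta\in(0,\tfrac12]$ such that $\|\nabla f(w)\|_g\ge c\,|f(w)-f_*|^{1-\theta}$ for all $w\in\mathcal{O}$. (Non-constancy of $f$ guarantees, via analyticity, that no point attains its value with infinite order, consistently with Definition~\ref{defn_weak_saddle}.)

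Now let $w_0\in W^s_-(C)$ and put $w(t)=\varphi^t(w_0)$, so that $[0,\infty)\subseteq J_{w_0}$ and $\DIST(w(t_n),C)\to0$ for some $t_n\to+\infty$. Because $t\mapsto f(w(t))$ is non-increasing and $f(w(t_n))\to f_*$, we get $f(w(t))\searrow f_*$ with $f(w(t))\ge f_*$ for all $t\ge0$; set $g(t)=f(w(t))-f_*\ge0$. Then I would run the classical \L{}ojasiewicz trapping argument: fix $r>0$ with the closed $r$-neighborhood of $C$ contained in $\mathcal{O}$, and pick $n$ so large that $w(t_n)$ lies in the $(r/2)$-neighborhood of $C$ and $\tfrac{1}{c\theta}g(t_n)^{\theta}<r/2$. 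As long as $w(t)$ stays in $\mathcal{O}$ for $t\ge t_n$ one has $|\dot w(t)|=\|\nabla f(w(t))\|_g\le-\tfrac{1}{c\theta}\tfrac{d}{dt}g(t)^{\theta}$, which integrated bounds the arclength of $w|_{[t_n,t]}$ by $\tfrac{1}{c\theta}g(t_n)^{\theta}<r/2$; a first-exit/continuity argument then shows $w(t)$ never leaves the $r$-neighborhood of $C$, hence $w|_{[t_n,\infty)}$ has finite length and $w(t)$ converges to some $w_\infty$. Since $\int_0^\infty\|\nabla f(w(t))\|_g^2\,dt=f(w_0)-f_*<\infty$ and $\nabla f(w(t))\to\nabla f(w_\infty)$, we get $w_\infty\in\CRIT(f)\cap\mathcal{O}=C$, i.e.\ $w_0\in W^s_+(C)$. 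Thus $W^s_-(C)=W^s_+(C)$, and Theorem~\ref{main_thm_larger_sets} applied to $C$ (every point of which is a tamed weakly strict saddle point) yields that $W^s_-(C)$ has zero measure.

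I expect no deep obstacle here, since the essential geometric content is already in Theorems~\ref{main_thm_larger_sets} and~\ref{thm_weak_center_stable}; the only points requiring care are (i) verifying that $C$ is isolated in $\CRIT(f)$ and that $f|_C$ is constant, so that a single uniform \L{}ojasiewicz inequality with one reference value $f_*$ is valid on a neighborhood of the compact set $C$, and (ii) the sign subtlety that near a saddle point $f$ is not bounded below by $f_*$ — handled above by observing that monotonicity of $t\mapsto f(w(t))$ combined with $\DIST(w(t_n),C)\to0$ forces $f(w(t))\ge f_*$ along the entire forward trajectory, which is precisely what makes $g\ge0$ and legitimizes working with $g^{\theta}$.
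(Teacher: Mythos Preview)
Your proof is correct and shares the paper's overall strategy: establish $W^s_-(C)\subseteq W^s_+(C)$ via the \L{}ojasiewicz gradient inequality and then invoke Theorem~\ref{main_thm_larger_sets}. The execution differs in an interesting way. You first argue that $C$ is open in $\CRIT(f)$ (using local connectedness of real-analytic varieties), hence isolated, then fix a uniform \L{}ojasiewicz inequality on a neighborhood $\mathcal{O}$ of $C$ with the single reference value $f_*$, and run the classical trapping argument with the Lyapunov function $(f-f_*)^\theta$ to confine the trajectory near $C$ and force convergence. The paper instead bypasses the isolation step entirely: it uses a purely topological argument (the structure of $\omega$-limit sets from Lemma~\ref{rmk_omega_sets} together with the fact that $C$ is a connected \emph{component} of $\CRIT(f)$) to show $\omega(w_0)\subset C$, hence compact, and only then applies the packaged \L{}ojasiewicz convergence result Lemma~\ref{lemma_omega_limit_set_one_pt}. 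Your route is more direct and uses the familiar $(f-f_*)^\theta$ trick; the paper's route avoids invoking the (true but nontrivial) local connectedness of analytic sets and separates the \L{}ojasiewicz step into a reusable lemma. Both are sound.
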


\begin{proof}
We present a proof based on Theorem~\ref{main_thm_larger_sets} and on the well-known Lemma~\ref{lemma_omega_limit_set_one_pt}. Denote $E=W^s_+(C)$ and $E'=W^s_-(C)$ for simplicity. Since by Theorem~\ref{main_thm_larger_sets}, $E$ has Lebesgue measure equal to zero, it suffices to show that $E' \subset E$.

Let $w_0 \in E'$. Denote by $T_+ \in (0,+\infty]$ the maximal existence time of $\varphi^t(w_0)$. We claim that $T_+=+\infty$. If $T_+<+\infty$ then standard ODE theory implies $\varphi^t(w_0)$ escapes every compact subset of $M$ as $t\to T_+$. Since $C$ is compact, $\liminf_{t\to T_+}\DIST(\varphi^t(w_0),C)>0$, in contradiction to the definition of~$E'$. This proves the claim.

Denote the $\omega$-limit set of $t\mapsto \varphi^t(w_0)$ by $\omega(w_0)$ (Definition~\ref{defn_omega_limit_sets}). Then $\omega(w_0)$ is a closed set contained in $\CRIT(f)$. By the definition of $E'$ we must have $\omega(w_0) \cap C \neq \emptyset$, in particular $\omega(w_0) \neq \emptyset$.

We claim that $\omega(w_0)$  is compact. Let $\omega_*$ be a connected component of $\omega(w_0)$ containing a point in $\omega(w_0) \cap C$. Hence $\omega_* \cup C$ is a connected subset of $\CRIT(f)$ that contains the connected component~$C$. It follows that $\omega_* \cup C = C \ \Rightarrow \ \omega_* \subset C$. If $\omega(w_0)$ is disconnected then $\omega_*$ is not compact (Remark~\ref{rmk_omega_sets}), in contradiction to the assumed compactness of~$C$. Hence $\omega(w_0)=\omega_*$ is connected and $\omega(w_0) \subset C$, thus proving the claim.

So far we have that $\omega(w_0)$ is compact, connected and non-empty. Hence $\varphi^{[0,+\infty)}(w_0)$ is bounded. Lemma~\ref{lemma_omega_limit_set_one_pt} applies and we find $w_* \in C$  such that $\omega(w_0)=\{w_*\}$, i.e. $\varphi^t(w_0) \to w_*$ as $t\to+\infty$. Hence $w_0 \in E$.
\end{proof}


We end this paragraph with a conjecture which is key to our approach to the avoidance problem. Theorem~\ref{thm_weak_center_stable} and its consequences, namely Theorem~\ref{main_thm_larger_sets} and Corollary~\ref{cor_main}, and also our main result Theorem~\ref{main_thm_real_analytic_larger_sets},
conjecturally hold without the tameness assumption. Proposition~\ref{prop_lnn_app} below reveals the wide range of applications that the validity of this conjecture would unleash for linear neural networks. 

\begin{conjecture}
\label{conjecture_bold}
Let $(M,g)$ be real-analytic, and $f:M\to\R$ be real-analytic and non-constant. If a compact connected component $C$ of $\CRIT(f)$ consists of weakly strict saddle points, then $W^s_-(C)$ has zero measure. Moreover, if $M$ is compact and $C\subset\CRIT(f)$ is any compact subset, then $W^s_-(C)$ has zero measure.
\end{conjecture}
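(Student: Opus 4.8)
The plan is to run the same three–layer argument that proves Theorem~\ref{main_thm_real_analytic_larger_sets}, isolating the one place where tameness actually enters. Observe first that the covering argument in the proof of Theorem~\ref{main_thm_larger_sets} and the {\L}ojasiewicz–type reduction ``$W^s_-(C)\subset W^s_+(C)$'' in the proof of Theorem~\ref{main_thm_real_analytic_larger_sets} use no property of the saddle points beyond the existence, for each of them, of a local zero–measure ``center–stable'' set as in Theorem~\ref{thm_weak_center_stable}; so both go through verbatim. For the ``moreover'' clause, if $M$ is compact then every trajectory is complete and bounded, its $\omega$–limit set is a non-empty compact connected subset of $\CRIT(f)$, hence a single point $w_*$ by the {\L}ojasiewicz Lemma~\ref{lemma_omega_limit_set_one_pt}; covering the compact set $C$ by finitely many of the neighborhoods $B(w_*)$ supplied by Theorem~\ref{thm_weak_center_stable}(a) and pulling back the corresponding null sets $W(w_*)$ through the diffeomorphisms $\varphi^{-n}$, exactly as in the proof of Theorem~\ref{main_thm_larger_sets}, shows that $W^s_-(C)=W^s_+(C)$ has zero measure. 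Thus the entire conjecture reduces to establishing Theorem~\ref{thm_weak_center_stable} \emph{without} the assumption that $\CRIT(p)$ is finite.

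For that local statement I would argue by blow–up. Fix normal coordinates at $x_0$, write a tangent vector as $v=rz$ with $r=\|v\|_g\ge 0$ and $z\in S_{x_0}M$, so that $f(\exp(rz))-f(x_0)=r^k p(z)+O(r^{k+1})$ with $k$ and $p=P|_{S_{x_0}M}$ as in Definition~\ref{defn_weak_saddle}. In the variables $(r,z)$, after the time change $ds=r^{k-2}\,dt$ that removes the common factor $r^{k-2}$ from the radial and angular components of $\nabla f$, the gradient flow of $-\nabla f$ becomes a perturbation of an autonomous system of the schematic form $r'=-r\,(k\,p(z)+O(r))$ and $z'=-\GRAD p(z)+O(r)$, where $\GRAD p$ is the gradient of $p$ along the sphere. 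The exceptional locus $\{r=0\}$ is invariant and carries the gradient flow of $-p$ on $S_{x_0}M$, whose equilibria are exactly the points of $\CRIT(p)$; along any trapped trajectory $p$ is a Lyapunov function for the $z$–component, so $z(s)$ converges to $\CRIT(p)$, and for $r$ to stay small it must in fact converge to the compact set $N:=\CRIT(p)\cap\{p\ge 0\}$. The content of weak strictness is precisely that at each point of $N$ the Hessian of $p|_{S_{x_0}M}$ has a negative eigenvalue, i.e.\ the blown–up flow has a genuine expanding direction there; so one wants to show that the set of initial conditions whose forward trajectory is trapped near $x_0$ maps, under the blow–up, into a set assembled from the center–stable set of $N$ on $\{r=0\}$, and that the latter has positive codimension in $S_{x_0}M$. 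Since the blow–up map and the finitely many charts preserve null sets, this yields the desired $W(x_0)$.

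The step that tameness trivializes — and that must now be supplied — is the positive–codimension claim for the center–stable set of $N$; here real–analyticity is to be used more essentially than before. As $P$ is a polynomial, $\CRIT(p)$ is a compact semialgebraic subset of $S_{x_0}M$ with finitely many connected components $Z_1,\dots,Z_m$, and because each $Z_j$ is connected and $dp$ vanishes on it, $p$ is constant on $Z_j$, say $p\equiv c_j$, so $N$ is automatically the finite union of those $Z_j$ with $c_j\ge 0$. Fix such a $Z_j$ and take a semialgebraic Whitney stratification: along a smooth stratum the Hessian of $p|_{S_{x_0}M}$ annihilates the tangent directions to the stratum, so positive Morse index is a condition transverse to $Z_j$, and the {\L}ojasiewicz gradient inequality $\|\GRAD p(z)\|\ge c\,|p(z)-c_j|^{1-\theta}$ near $Z_j$ quantifies how fast nearby $z$–trajectories move. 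The plan is to combine these into a stratified center–stable construction — building, stratum by stratum from the top down, an invariant set that contains every forward–trapped trajectory and is cut out, on each stratum, by at least one equation coming from the expanding direction — equivalently, to show that a full–measure set of initial conditions near $N$ escapes a fixed tube around $N$ along the expanding direction before $r$ can collapse, leaving the trapped set null.

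The hard part is exactly this last construction. The obstruction is that $N$ is not known to be normally hyperbolic: beyond the one guaranteed expanding direction the transverse Hessian may be degenerate, and $\CRIT(p)$ may be genuinely singular, so classical (center–)stable manifold theory does not apply off the shelf, and the $O(r)$ errors in the blown–up system must be controlled with constants uniform over the possibly positive–dimensional set $N$. The natural remedy is a second, iterated blow–up, this time along $\CRIT(p)$ inside the exceptional sphere $\{r=0\}$, repeated finitely often — termination being guaranteed by the semialgebraic stratification — until every equilibrium set capable of attracting trajectories toward $\{r=0\}$ has been reduced to one for which a quantitative unstable–manifold estimate does hold; carrying this out with uniform constants is precisely the point at which a complete proof is currently missing, which is why the statement is left as a conjecture.
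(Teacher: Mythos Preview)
The paper offers no proof of this statement: it is explicitly labeled a \emph{conjecture} and left open. Your proposal is therefore not to be compared against a paper proof; rather, it is an outline of a possible attack, and you yourself correctly identify in the last paragraph that the crucial step --- a stratified or iterated blow-up yielding a uniform center--stable construction over a possibly positive-dimensional, non-normally-hyperbolic equilibrium set $N\subset\CRIT(p)$ --- is missing. Your reduction of the first clause to an untamed analogue of Theorem~\ref{thm_weak_center_stable}, via the covering argument of Theorem~\ref{main_thm_larger_sets} and the \L ojasiewicz reduction $W^s_-(C)\subset W^s_+(C)$ of Theorem~\ref{main_thm_real_analytic_larger_sets}, is exactly the structure the paper uses in the tamed case, and your sketch of the blown-up system $(r'=-r(kp(z)+O(r)),\ z'=-\GRAD p(z)+O(r))$ matches Lemma~\ref{lemma_extension_of_gradient}. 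So as a roadmap for the first clause your proposal is faithful to the paper's methods and honest about the obstruction.

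There is, however, a genuine gap in your treatment of the ``moreover'' clause. You argue by covering $C$ with neighborhoods $B(w_*)$ furnished by Theorem~\ref{thm_weak_center_stable}(a), but that theorem (even in the untamed form you are aiming for) applies only at \emph{weakly strict saddle points}, whereas the conjecture's second sentence allows $C$ to be \emph{any} compact subset of $\CRIT(f)$. Read literally, that clause is false --- take $C=\{w_*\}$ with $w_*$ a strict local minimum, and $W^s_-(C)$ contains an open basin --- so either the authors intend an unstated restriction (e.g.\ that $C$ consist of saddle points, or of weakly strict ones) or the clause is a much bolder statement than your argument addresses. In either case your covering-by-$B(w_*)$ step does not go through without an additional hypothesis on the points of $C$, and you should flag this rather than fold it into the same reduction.
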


Our current results may already find applications to other problems in machine learning. This is the content of future work.

\subsection{Linear neural networks}

For each pair $(r,s)\in\N\times\N$ denote by $\R^{r\times s}$ the space of real matrices with $r$ rows and $s$ columns. The Euclidean inner-product on $\R^{r\times s}$ can be written as $\left<A,B\right>=\tr AB^T$, where $B^T$ denotes the transpose of $B$, and the Euclidean norm as $\|A\| = \sqrt{\tr AA^T}$. 

Let $d_1,\dots,d_{N+1}\in\N$, $x_1,\dots,x_m \in \R^{d_1} \simeq \R^{d_1\times 1}$ and $y_1,\dots,y_m\in\R^{d_{N+1}} \simeq \R^{d_{N+1}\times 1}$. Let $X\in\R^{d_1\times m}$ be the matrix whose $i$-th column is $x_i$, and $Y\in\R^{d_{N+1}\times m}$ be the matrix whose $i$-th column is $y_i$. Consider the polynomial function 
\begin{equation*}
f:\R^{d_2\times d_1} \times \R^{d_3\times d_2} \times \dots \times \R^{d_{N+1}\times d_{N}} \to \R
\end{equation*}
defined by
\begin{equation}
\label{fnct_obj_linear_neural_network}
f(W_1,\dots,W_N) = \frac{1}{2} \| W_N\cdots W_1X - Y\|^2, \qquad W_i \in \R^{d_{i+1}\times d_i}.
\end{equation}

The motivation is as follows. The matrix $W=W_N\dots W_1$ is a linear neural network with weights $W_j$. It feeds from an input $x\in\R^{d_1}\simeq\R^{d_1\times 1}$ to give the output $y = Wx \in \R^{d_{N+1}}\simeq \R^{d_{N+1}\times 1}$. The matrices $X$, $Y$ are the training data: one would like $W$ to satisfy $Wx_i=y_i$ for all $i$, and $f$ measures how much this fails. The function $f$ is called the loss function. We train the model by evolving $(W_1,\dots,W_N)$ under the gradient flow to minimize the loss. As the flow evolves, the linear neural network achieves a progressively better fit to the training data.

Assume that $f$ is non-constant. The variable in $$ V = \R^{d_2\times d_1} \times \R^{d_3\times d_2} \times \dots \times \R^{d_{N+1}\times d_{N}} $$ will be denoted as $\vec{W}=(W_1,\dots,W_N)$. Define $\zeta(\vec{W})\in\{0,\dots,N\}$, $\kappa(\vec{W})\geq1$ by:
\begin{itemize}
    \item $\zeta(\vec{W}) = \#\{i\in\{1,\dots,N\} : W_i=0\}$.
    \item $\kappa(\vec{W})$ the largest integer $k\geq 1$  such that $f$ attains its value at $\vec{W}$ with order~$k-1$.
\end{itemize}
Note that $\vec{W} \in\CRIT(f)$ if, and only if, $\kappa(\vec{W})\geq2$. Note also that, since $f$ is a polynomial of degree $2N$, if $\kappa(\vec{W})>2N$ then $f$ is constant. Hence, $\kappa(\vec{W})\leq 2N$ for every $\vec{W}$. The main goal of this section is the following statement.

\begin{proposition}
\label{prop_lnn_app}
Every saddle point $\vec{W}$ of $f$ such that $\zeta(\vec{W})=\kappa(\vec{W})$ is weakly strict.
\end{proposition}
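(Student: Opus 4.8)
The plan is to verify the two clauses of Definition~\ref{defn_weak_saddle}. The first is immediate: $f$ is a non-constant polynomial, so it cannot attain its value at $\vec W$ with infinite order. For the second, the first task is to compute the blow-up polynomial $P$. Put $k=\kappa(\vec W)=\zeta(\vec W)$; since $\vec W$ is a saddle point it is in particular critical, so $k\ge 2$, and exactly $k$ of the matrices vanish, say $W_i=0$ for $i\in S=\{j_1<\dots<j_k\}$. Replacing each $W_i$ by $W_i+v_i$ in the product $W_N\cdots W_1$, every monomial of $v$-degree less than $k$ still fails to supply the $k$ missing factors and hence vanishes, while the degree-$k$ part is the single ordered product $M(v)$ formed from the $W_i$ with $i\notin S$ and the blocks $v_{j_\ell}$. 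Inserting this into $\tfrac12\|(\,\cdot\,)X-Y\|^2$, and noting that the quadratic term contributes only at order $2k>k+1$, yields $f(\vec W+v)-f(\vec W)=-\langle M(v)X,Y\rangle+O(\|v\|^{k+1})$; since $\kappa(\vec W)=k$ this degree-$k$ part is not identically zero, so that $P$ equals, up to a positive constant, the form $v\mapsto-\langle M(v)X,Y\rangle$. The crucial structural fact to exploit is that $P$ is multilinear: separately linear in each block $v_{j_\ell}$ and independent of the remaining coordinates $v_i$, $i\notin S$.

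Now fix $z\in\CRIT(p)$ with $p(z)\ge 0$, where $p=P|_S$ and $S$ is the unit sphere of the parameter space; we must produce a negative direction of the Hessian of $p$ at $z$. The Lagrange condition is $\nabla P(z)=\mu z$; pairing with $z$ and using Euler's identity for the $k$-homogeneous $P$ gives $\mu=k\,p(z)$. Suppose first $p(z)>0$, hence $\mu>0$. Then $z_i=0$ for every $i\notin S$ (because $\partial_{v_i}P\equiv0$), and multilinearity forces each block $U_\ell:=z_{j_\ell}$ to be nonzero. Pairing the $\ell$-th block equation of $\nabla P(z)=\mu z$ with $U_\ell$ and using linearity of $P$ in that block gives $p(z)=\mu\|U_\ell\|^2$, so all blocks have the same norm $k^{-1/2}$. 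Restrict $p$ to the submanifold $\Sigma$ of unit vectors of the form $(\lambda_1U_1,\dots,\lambda_kU_k)$, $\lambda\in\R^k$ (other coordinates $0$); since $\Sigma\subset S$ passes through the critical point $z$, the point $z$ is also critical for $p|_\Sigma$ and its Hessian there is the restriction of the Hessian of $p$ at $z$. Up to a rescaling of the $\lambda$-coordinates, $\Sigma$ is a round $(k-1)$-sphere, $z$ corresponds to its symmetric point, and $p|_\Sigma$ equals $p(z)$ times the elementary product $\lambda_1\cdots\lambda_k$, whose Hessian along the sphere at the symmetric point is — by a short direct computation — a negative multiple of the identity. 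Since $p(z)>0$, the Hessian of $p$ at $z$ is negative definite on $T_z\Sigma$, and so $z$ has Morse index at least $k-1\ge 1$.

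The remaining case $p(z)=0$ — whence $\mu=0$, $\nabla P(z)=0$, and the Hessian of $p$ at $z$ is simply $D^2P(z)$ restricted to $z^\perp$ — is the crux, and the step I expect to be the main obstacle. One must find $w\perp z$ with $D^2P(z)(w,w)<0$. By multilinearity $D^2P(z)(w,w)=2\sum_{\ell<\ell'}B_{\ell,\ell'}(w_{j_\ell},w_{j_{\ell'}})$, where $B_{\ell,\ell'}$ is the bilinear form obtained from $P$ by freezing every block other than $\ell$ and $\ell'$ at its value in $z$. The intended argument is to rewrite $\nabla P(z)=0$ as the vanishing of the block-wise gradients $R_\ell XY^{T}P_\ell$ — with $P_\ell,R_\ell$ the partial products of $M(z)$ to the left and to the right of the $\ell$-th block — and to combine this with the product structure of $M$ to preclude $\sum_{\ell<\ell'}B_{\ell,\ell'}$ from being positive semidefinite on $z^\perp$; taking $w$ supported on a suitable pair of blocks then produces a bilinear, hence indefinite, restriction and thus the desired negative direction. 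This is delicate because for a general homogeneous polynomial a critical point of value $0$ can genuinely have Morse index $0$, so the factorised form of $M$ together with the vanishing of the block gradients at $z$ must enter in an essential way.
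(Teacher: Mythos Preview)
Your computation of the blow-up polynomial is correct: $P$ is, up to sign and a factorial, the multilinear form $\vartheta$ obtained by inserting the perturbations into the $k$ zero slots of $W_N\cdots W_1 XY^T$, exactly as in the paper. Your treatment of the case $p(z)>0$ is also correct and genuinely different from the paper's: by restricting to the sub\-sphere $\Sigma$ spanned by rescalings of the individual blocks of $z$, you reduce to $\lambda_1\cdots\lambda_k$ on a round sphere, whose Hessian at the diagonal point is negative definite. This is elegant and in fact yields a sharper bound on the Morse index (namely $\ge k-1$) than the paper's argument provides.

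The gap is the case $p(z)=0$, which you yourself flag as the crux. Your sketch --- restrict to a pair of blocks, observe that the resulting form $B_{\ell,\ell'}$ is bilinear, conclude indefiniteness --- requires that \emph{some} $B_{\ell,\ell'}$ be nonzero, and you never establish this; nor is the ``preclude positive semidefiniteness via the block gradients'' step ever actually carried out. The paper avoids the case split entirely with one structural observation (Lemma~\ref{lemma_trace_zero}): because $P$ is separately linear in $k$ \emph{distinct} coordinate blocks, every diagonal second partial vanishes and hence $\tr\,\partial^2P\equiv 0$. Feeding this into the identity $\sum_j D^2p(z)(e_j,e_j)=\tr\,\partial^2P(z)-k(k+d-2)\,p(z)$ derived in Lemma~\ref{lemma_spectrum_crucial} gives $\sum_j D^2p(z)(e_j,e_j)\le 0$ at every critical point with $p(z)\ge 0$, producing a negative eigendirection whenever $D^2p(z)\not\equiv 0$. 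This trace argument is the idea you are missing, and it handles $p(z)>0$ and $p(z)=0$ uniformly without any appeal to the particular matrix structure of $\vartheta$.
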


\begin{corollary}
If $XY^T\neq0$ then $\vec{0}$ is a weakly strict saddle point.
\end{corollary}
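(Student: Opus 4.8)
The plan is to deduce the statement from Proposition~\ref{prop_lnn_app}: it suffices to check that $\vec0$ is a saddle point of $f$ and that $\zeta(\vec0)=\kappa(\vec0)$. Since all blocks of $\vec0$ vanish, $\zeta(\vec0)=N$ is immediate, so the two things to establish are $\kappa(\vec0)=N$ and the saddle property, both of which I would read off from the explicit homogeneous decomposition of the polynomial $f$ about $\vec0$.

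Using $\langle A,B\rangle=\tr(AB^{T})$ and cyclicity of the trace, one expands
\[
f(\vec W)=\tfrac12\|Y\|^{2}-\tr(W_N\cdots W_1\,XY^{T})+\tfrac12\|W_N\cdots W_1 X\|^{2},
\]
whose three summands are homogeneous of degrees $0$, $N$ and $2N$ in $\vec W=(W_1,\dots,W_N)$. Thus $f-f(\vec0)$ has homogeneous components only in degrees $N$ and $2N$; as soon as the degree-$N$ part $P(\vec W):=-\tr(W_N\cdots W_1\,XY^{T})$ is not the zero polynomial, the lowest nonzero component sits in degree $N$, i.e.\ $f$ attains its value at $\vec0$ with order exactly $N-1$, so $\kappa(\vec0)=N=\zeta(\vec0)$, and $P$ is the homogeneous polynomial of Definition~\ref{defn_weak_saddle} (note $2N\geq N+1$, so the degree-$2N$ part is the error term $\varepsilon$). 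Here $N\geq2$, the regime of interest, since $\kappa(\vec0)=N\geq2$ is exactly what makes $\vec0$ a critical point; for $N=1$ and $XY^{T}\neq0$ the point $\vec0$ fails to be critical.

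It remains to show $P\not\equiv0$ from the hypothesis $XY^{T}\neq0$, and this is the one step that requires an argument. Fix $a\in\{1,\dots,d_1\}$ and $b\in\{1,\dots,d_{N+1}\}$ with $(XY^{T})_{ab}\neq0$, write $e_a\in\R^{d_1}$, $e_b\in\R^{d_{N+1}}$ for the corresponding standard basis vectors, pick any nonzero auxiliary vectors $v_i\in\R^{d_{i+1}}$ for $1\leq i\leq N-1$, and set $v_0=e_a$, $v_N=e_b$. Then the rank-one matrices $W_i=v_i v_{i-1}^{T}/\|v_{i-1}\|^{2}$ telescope to $W_N\cdots W_1=e_b e_a^{T}$, whence $\tr(W_N\cdots W_1\,XY^{T})=e_a^{T}XY^{T}e_b=(XY^{T})_{ab}\neq0$, so $P\not\equiv0$. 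The only mild subtlety is that the product $W_N\cdots W_1$ is constrained to have rank at most $\min_i d_i$, but the construction above always lands on the specific rank-one matrix $e_b e_a^{T}$ needed to detect the chosen nonzero entry of $XY^{T}$.

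Finally, $\vec0$ is a genuine saddle point. Indeed $P$ takes both signs: if $P(\vec W)\neq0$, then replacing $W_1$ by $-W_1$ reverses the sign of $W_N\cdots W_1$, hence of $P$. Along the ray $t\mapsto t\vec W$ one has $f(t\vec W)-f(\vec0)=t^{N}P(\vec W)+\tfrac12 t^{2N}\|W_N\cdots W_1 X\|^{2}$, so for small $t>0$ the sign of $f(t\vec W)-f(\vec0)$ equals that of $P(\vec W)$; choosing $\vec W$ with $P(\vec W)>0$ and then with $P(\vec W)<0$ produces points arbitrarily close to $\vec0$ where $f$ exceeds, respectively undershoots, $f(\vec0)$. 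Hence $\vec0$ is neither a local minimum nor a local maximum, it is a saddle point with $\zeta(\vec0)=\kappa(\vec0)$, and Proposition~\ref{prop_lnn_app} applies and yields that $\vec0$ is weakly strict.
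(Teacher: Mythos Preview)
Your proof is correct and follows the same route as the paper's: expand $f$ at $\vec 0$ into its degree-$0$, degree-$N$ and degree-$2N$ homogeneous parts, identify the degree-$N$ part as the leading term $P$, verify it is non-zero (hence $\kappa(\vec 0)=N=\zeta(\vec 0)$), observe the saddle property, and invoke Proposition~\ref{prop_lnn_app}. You simply fill in two details the paper leaves implicit---the explicit rank-one construction showing $P\not\equiv 0$ from $XY^T\neq 0$, and the sign-flip argument for the saddle property---and you correctly flag the $N=1$ edge case.
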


\begin{proof}
The Taylor expansion of $f$ at $\vec{0}$ is
\[
f(\vec{W})= \frac{1}{2}\|Y\|^2 - \left< WX,Y \right> + \frac{1}{2}\|WX\|^2 = \frac{1}{2}\|Y\|^2 - P(\vec{W}) + O(\|\vec{W}\|^{2N})
\]
where $P(\vec{H})=\left< HX,Y \right>=\tr\, HXY^T$ is a non-zero homogeneous polynomial in $\vec{H}$ of degree $N$; here $\vec{H}=(H_1,\dots,H_N)$ and $H=H_N\cdots H_1$. It follows that  the critical point $\vec{0}$ is a saddle point and that $N=\zeta(\vec{0})=\kappa(\vec{0})$.
\end{proof}

We are now concerned with the proof of Proposition~\ref{prop_lnn_app}. We need two algebraic lemmas.

\begin{lemma}
\label{lemma_trace_zero}
Consider $V_1,\dots,V_N$ finite-dimensional real vector spaces, and set $V=V_1\times \dots\times V_N$. Denote the projection onto the $i$-th component by $\pi_i:V\to V_i$. Consider $1\leq k\leq N$, $1\leq i_1<\dots<i_k\leq N$ and $\vartheta:V_{i_1}\times\dots\times V_{i_k} \to \R$ a $k$-linear map. Set $P:V\to\R$, $P(v) = \vartheta(\pi_{i_1}v,\dots,\pi_{i_k}v)$. Denote by $\partial^2P(v):V\times V\to\R$ the second derivative of $P$ at $v\in V$, seen as a bilinear symmetric map. Then $\tr \, \partial^2P(v)=0$ holds for all $v\in V$.
\end{lemma}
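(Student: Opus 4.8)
The plan is to exploit that, since the indices $i_1 < \dots < i_k$ are pairwise distinct, each factor $V_j$ of $V = V_1 \times \dots \times V_N$ feeds into at most one of the $k$ arguments of $\vartheta$. Therefore, with the other blocks held fixed, the map $V_j \ni u \mapsto P(v_1, \dots, u, \dots, v_N)$ is affine (its linear part being present exactly when $j \in \{i_1, \dots, i_k\}$). The lemma then amounts to the assertion that $P$, being separately affine --- in particular separately harmonic --- in each block, is harmonic, i.e. has vanishing Laplacian, and that $\tr \, \partial^2 P(v)$ is exactly this Laplacian. So the first step I would take is to fix on $V$ the product inner product assembled from inner products on the factors $V_j$; this is the convention with respect to which $\tr$ is to be read, and is the relevant one for the applications, where each $V_j$ carries its Euclidean structure.

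Next I would identify $V_j$ with the subspace $\{0\} \times \dots \times V_j \times \dots \times \{0\} \subset V$, so that $V = \bigoplus_{j=1}^N V_j$ orthogonally, and pick an orthonormal basis of $V$ equal to the union of orthonormal bases $(e^j_\alpha)_\alpha$ of the factors. Then
\[
\tr \, \partial^2 P(v) = \sum_{j=1}^N \sum_\alpha \partial^2 P(v)\bigl(e^j_\alpha, e^j_\alpha\bigr),
\]
and for each fixed $j$ the inner sum is the trace of the Hessian at $\pi_j v$ of the restricted map $u \mapsto P(v_1, \dots, u, \dots, v_N)$ on $V_j$, because the quantities $\partial^2 P(v)(e^j_\alpha, e^j_\beta)$ are precisely the second-order partial derivatives of $P$ in the $V_j$-directions. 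Since this restricted map is affine, its Hessian vanishes identically; hence the inner sum is $0$ for every $j$, and summing over $j$ yields $\tr \, \partial^2 P(v) = 0$.

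I expect no real obstacle here --- the content is just that a polynomial of the form ``a multilinear form evaluated on pairwise distinct blocks'' is harmonic. The only things deserving a little care are (i) making the $\tr$-convention (an inner product adapted to the product splitting of $V$) explicit, and (ii), for readers who prefer a direct computation to the separate-harmonicity argument, expanding $P(v + sh + th') = \vartheta(v_{i_1} + sh_{i_1} + th'_{i_1}, \dots, v_{i_k} + sh_{i_k} + th'_{i_k})$ by $k$-linearity of $\vartheta$ to read off
\[
\partial^2 P(v)(h,h') = \sum_{\substack{1 \leq a, b \leq k \\ a \neq b}} \vartheta\bigl(v_{i_1}, \dots, h_{i_a}, \dots, h'_{i_b}, \dots, v_{i_k}\bigr),
\]
with $h_{i_a}$ placed in the $a$-th slot and $h'_{i_b}$ in the $b$-th slot; then, evaluating at $h = h' = e^j_\alpha$, a nonzero summand would force $i_a = j = i_b$ and hence $a = b$, contradicting $a \neq b$, so all diagonal terms of the Hessian vanish.
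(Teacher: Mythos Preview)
Your proposal is correct and follows essentially the same route as the paper: choose a basis of $V$ adapted to the product decomposition (each basis vector supported in a single factor $V_j$), compute $\partial^2P(v)(h,h')$ as a sum over pairs $a\neq b$ of slots, and observe that every diagonal term $\partial^2P(v)(e,e)$ vanishes because a basis vector $e$ supported in a single $V_j$ cannot simultaneously feed two distinct slots $i_a\neq i_b$. Your ``separately affine, hence separately harmonic'' gloss is a pleasant conceptual packaging of the same observation, and your care in making the trace convention (product inner product, orthonormal basis) explicit is well placed --- the paper's phrase ``does not depend on the choice of $b$'' tacitly assumes this.
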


\begin{proof}
Set $d=\sum_{i=1}^N\dim V_i$. Let $v\in V$ be arbitrary. The trace of $\partial^2P(v)$ is $\sum_{j=1}^d \partial^2P(v)(e_j,e_j)$ where $b=\{e_1,\dots,e_d\}$ is a basis of $V$. It does not depend on the choice of $b$. Choose $b$ consisting of vectors $e\in V$ with the following key property: $\pi_\lambda e\neq0 \Rightarrow \pi_\mu e=0$ for all $\mu\neq\lambda$. We compute
\[
\begin{aligned}
dP(v)u
&= \vartheta(\pi_{i_1}u,\pi_{i_2}v,\pi_{i_3}v,\dots,\pi_{i_k}v) \\
&\qquad + \vartheta(\pi_{i_1}v,\pi_{i_2}u,\pi_{i_3}v,\dots,\pi_{i_k}v) \\
&\qquad \quad \dots \\
&\qquad+ \vartheta(\pi_{i_1}v,\dots,\pi_{i_{k-1}}v,\pi_{i_k}u)
\end{aligned}
\]
and
\[
\begin{aligned}
&\partial^2P(v)(u_1,u_2) \\
&= \sum_{1\leq r<s\leq k} \vartheta(\pi_{i_1}v,\dots,\pi_{i_{r-1}}v,\pi_{i_{r}}u_1,\pi_{i_{r+1}}v,\dots,\pi_{i_{s-1}}v,\pi_{i_{s}}u_2,\pi_{i_{s+1}}v,\dots,\pi_{i_k}v) \\
&\qquad \qquad \quad + \vartheta(\pi_{i_1}v,\dots,\pi_{i_{r-1}}v,\pi_{i_{r}}u_2,\pi_{i_{r+1}}v,\dots,\pi_{i_{s-1}}v,\pi_{i_{s}}u_1,\pi_{i_{s+1}}v,\dots,\pi_{i_k}v)
\end{aligned}
\]
from where it follows that $\partial^2P(v)(e,e)=0$ for every element $e$ of a basis $b$ satisfying the key property mentioned above, in fact, if $e\in b$ then $\pi_{i_r}e\neq0 \Rightarrow \pi_{i_s}e=0$.
\end{proof}

\begin{lemma}
\label{lemma_spectrum_crucial}
Let $P:\R^d\to\R$ be a non-constant homogeneous polynomial of degree~$k$. Let $y_0\in\S^{d-1}$ be a critical point of $p=P|_{\S^{d-1}}$. If the matrix $\partial^2P(y_0)=[\partial^2_{ij}P(y_0)]$ satisfies $\tr\,\partial^2P(y_0) \leq k(k+d-2)P(y_0)$ and $D^2p(y_0)$ does not vanish as a bilinear form on $T_{y_0}\S^{d-1}$, then there exists $z\in T_{y_0}\S^{d-1}$ such that $D^2p(y_0)(z,z)<0$.
\end{lemma}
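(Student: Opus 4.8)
The plan is to relate the Hessian of $p$ on the sphere to the ambient Hessian $\partial^2 P(y_0)$ and to exploit homogeneity via Euler's identity. First, recall that for a homogeneous polynomial $P$ of degree $k$, Euler's identity gives $dP(y)\cdot y = kP(y)$ and, differentiating again, $\partial^2 P(y)\cdot y = (k-1)\,dP(y)^\T$, i.e. $y$ is an eigenvector-like direction: $\partial^2 P(y_0)(y_0,y_0) = k(k-1)P(y_0)$. Moreover, the condition that $y_0$ is a critical point of $p = P|_{\S^{d-1}}$ means that $dP(y_0)$ is parallel to $y_0$; combined with Euler this forces $dP(y_0) = kP(y_0)\,y_0^\T$ (as a covector, using the round metric to identify $T_{y_0}\S^{d-1}$ with $y_0^\perp$).

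Next I would compute $D^2p(y_0)$ explicitly in terms of $\partial^2 P(y_0)$ and the second fundamental form of the sphere. A standard computation (e.g. extend $z \in T_{y_0}\S^{d-1}$ to the geodesic $t\mapsto \cos(t)\,y_0 + \sin(t)\,z$ for $\|z\|=1$, and differentiate $t\mapsto P$ twice at $t=0$) yields
\[
D^2p(y_0)(z,z) = \partial^2 P(y_0)(z,z) - k\,P(y_0)\,\|z\|^2 \qquad \text{for } z\in T_{y_0}\S^{d-1}.
\]
So $D^2p(y_0)$, as a bilinear form on the $(d-1)$-dimensional space $T_{y_0}\S^{d-1}$, is the restriction of $\partial^2 P(y_0)$ shifted by $-kP(y_0)$ times the identity. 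Its trace over $T_{y_0}\S^{d-1}$ is therefore
\[
\tr_{T_{y_0}\S^{d-1}} D^2p(y_0) = \tr \partial^2 P(y_0) - \partial^2 P(y_0)(y_0,y_0) - k(d-1)P(y_0) = \tr\partial^2 P(y_0) - k(k+d-2)P(y_0),
\]
using $\partial^2 P(y_0)(y_0,y_0) = k(k-1)P(y_0)$ from the previous step. By hypothesis this trace is $\leq 0$.

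Now the conclusion is immediate: $D^2p(y_0)$ is a symmetric bilinear form on a finite-dimensional inner product space with non-positive trace, so the sum of its eigenvalues is $\leq 0$. If it did not attain a negative value, all eigenvalues would be $\geq 0$, forcing every eigenvalue to be $0$, hence $D^2p(y_0) \equiv 0$ as a bilinear form on $T_{y_0}\S^{d-1}$ — contradicting the hypothesis that it does not vanish. Therefore some eigenvalue is strictly negative, and the corresponding unit eigenvector $z \in T_{y_0}\S^{d-1}$ satisfies $D^2p(y_0)(z,z) < 0$, as claimed.

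The only step requiring genuine care is the identification of the sphere Hessian: one must be careful that $D^2p(y_0)$ is the intrinsic (Riemannian) Hessian on $\S^{d-1}$, which at a critical point equals the ordinary second derivative of $p$ along geodesics, and that the geodesics of the round sphere through $y_0$ in direction $z$ are exactly the great circles $\cos(t)y_0 + \sin(t)z$. The factor $k(k+d-2)$ appearing in the hypothesis is precisely $k(k-1) + k(d-1)$, i.e. the "$y_0$-direction" contribution $k(k-1)$ plus the $(d-1)$ copies of $kP(y_0)$ subtracted off; this is exactly the bookkeeping that must come out right. I expect the main obstacle is purely notational — keeping the ambient versus intrinsic objects and the Euler-identity constants straight — rather than conceptual.
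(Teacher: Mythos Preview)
Your proof is correct and follows essentially the same approach as the paper: both establish the key identity $D^2p(y_0)(z,z) = \partial^2 P(y_0)(z,z) - kP(y_0)\|z\|^2$ on $T_{y_0}\S^{d-1}$, compute the trace as $\tr\partial^2P(y_0) - k(k+d-2)P(y_0) \leq 0$, and conclude via the eigenvalue argument. The only cosmetic difference is that you derive the Hessian identity by differentiating along great circles and invoking Euler's identity, whereas the paper pulls $p$ back through the radial projection $V(y)=y/\|y\|$ and computes $D^2(P\circ V)$ via the chain rule; your route is slightly more direct.
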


\begin{proof}
We write $\left<\cdot,\cdot\right>$ and $\|\cdot\|$ to denote the Euclidean inner-product and norm on~$\R^d$, respectively. Consider the function $h:\R^d\to\R$, $h(y)=\|y\|$, and its Euclidean gradient $V:\R^d\setminus\{0\} \to \R^d$, $V(y)=\frac{y}{\|y\|}$. Note that $y_0\in\S^{d-1}$ is a critical point of $p$ if, and only if, it is a critical point $P\circ V$, which means that $\nabla P(y_0)\in\R y_0$. The Hessian $D^2p(y_0):T_{y_0}\S^{d-1}\times T_{y_0}\S^{d-1}\to\R$ at such a critical point $y_0$ coincides with the restriction to $T_{y_0}\S^{d-1}=(\R y_0)^\bot$ of the Hessian $D^2\big(P\circ V\big)(y_0)$. In the following we will use $D^2$ also to denote the second derivative of maps seen as symmetric bilinear forms. Then $\nabla(P\circ V)(y_0)= DV(y_0)^T\nabla P(y_0)$ and
\[
\begin{aligned}
D^2(P\circ V)(y_0)(z_1,z_2) 
&= D^2P(y_0)(DV(y_0)z_1,DV(y_0) z_2) \\
&\qquad + \left< \nabla P(y_0),D^2V(y_0)(z_1,z_2)\right>.
\end{aligned}
\]
We compute $DV(y) z = \frac{1}{\|y\|} \left( z-\left<z,\frac{y}{\|y\|}\right> \frac{y}{\|y\|} \right) = h(y)^{-1} \big( z-\left<z,V(y)\right> V(y) \big)$ and
\[
\begin{aligned}
D^2V(y)(z_1,z_2)
&= \left<\nabla h^{-1}(y),z_2\right> \big( z_1-\left<z_1,V(y)\right> V(y) \big) \\
&\qquad - h(y)^{-1} \big( \left<z_1,DV(y)\cdot z_2\right> V(y) + \left<z_1,V(y)\right> DV(y)\cdot z_2  \big)
\end{aligned}
\]
for all $y\in\R^d\setminus\{0\}$ and $z_1,z_2\in\R^d$. Hence,
\begin{equation*}
D^2V(y)(z_1,z_2) = -\left<z_1,z_2\right>y \qquad \forall y\in\S^{d-1}, \ \forall z_1,z_2\in T_y\S^{d-1},
\end{equation*}
from where it follows that 
\begin{equation}
\begin{aligned}
D^2p(y_0)(z_1,z_2) 
&= D^2P(y_0)(z_1,z_2)-\left<z_1,z_2\right>\left<\nabla P(y_0),y_0\right> \\
&= D^2P(y_0)(z_1,z_2)-\left<z_1,z_2\right>kP(y_0)
\end{aligned}
\end{equation}
Let $e_2,\dots,e_d$ be an orthonormal basis of $T_{y_0}\S^{d-1}$ such that $D^2p(y_0)(e_\alpha,e_\beta)=0$ when $\alpha\neq \beta$. One can use the spectral theorem and the fact that $D^2p(y_0)$ is symmetric to obtain such a basis. Set $e_1=y_0$. Then
\begin{equation*}
\begin{aligned}
\sum_{j=2}^d D^2p(y_0)(e_j,e_j)
&= \left(\sum_{j=2}^d D^2P(y_0)(e_j,e_j)\right) - (d-1)kP(y_0) \\
&= \left(\sum_{j=1}^d D^2P(y_0)(e_j,e_j)\right) - D^2P(y_0)(y_0,y_0) - (d-1)kP(y_0) \\
&= \tr \, \partial^2P(y_0) - (k-1)kP(y_0) - (d-1)kP(y_0) \\
&= \tr \, \partial^2P(y_0) - (k+d-2)kP(y_0) \leq 0.
\end{aligned}
\end{equation*}
If $D^2p(y_0)(e_j,e_j)$ vanishes for every $j=2,\dots,d$ then $D^2p(y_0)$ vanishes on $T_{y_0}\S^{d-1}$. Hence, $D^2p(y_0)(e_j,e_j)<0$ holds for some  $j\in\{2,\dots,d\}$.
\end{proof}

When $V,E$ are vector spaces and $\theta:V^r\to E$ is a multi-linear map, its symmetrization is defined as 
\[
\theta^{\mathrm{sym}}(v_1,\dots,v_r) = \sum_{\sigma\in S_r} \theta^\sigma(v_1,\dots,v_r), \qquad \theta^\sigma(v_1,\dots,v_r) = \theta(v_{\sigma(1)},\dots,v_{\sigma(r)}).
\]
where $S_r$ denotes the group of permutations of $\{1,\dots,r\}$. This notation will be used in the proof below.

\begin{proof}
[Proof of Proposition~\ref{prop_lnn_app}]
Assume that $f$ is non-constant. Let $\vec{W}$ be a critical point of $f$ such that $k=\zeta(\vec{W})=\kappa(\vec{W})\leq N$. Let $I=(i_1,\dots,i_k) \in \{1,\dots,N\}^k$ be the indices $i$ such that $W_i=0$, labeled in increasing order $1\leq i_1<i_2<\dots<i_k\leq N$. Set $V_i=\R^{d_{i+1}\times d_{i}}$, so that $V = V_1\times\dots\times V_N$. The projection onto the $i$-th factor is denoted $\pi_i:V\to V_i$. Consider the multi-linear map $\vartheta : V_{i_1}\times\dots\times V_{i_k} \to\R$
\[
\vartheta(Q_1,\dots,Q_k) = \tr \ W_N\cdots W_{i_k+1}Q_{k}W_{i_k-1} \cdots W_{i_1+1}Q_{1}W_{i_1-1} \cdots W_1XY^T
\]
The $j$-th derivative $D^jf(\vec{W})$ vanishes for all $j=1,\dots,k-1$, and
\begin{equation}
\label{expression_k_th_derivative_of_f}
D^kf(\vec{W})(\vec{H^1},\dots,\vec{H^k}) = -\Theta^{\mathrm{sym}}(\vec{H^1},\dots,\vec{H^k})
\end{equation}
where
\begin{equation*}
\begin{aligned}
\Theta(\vec{H^1},\dots,\vec{H^k}) &= \vartheta(\pi_{i_1}(\vec{H^1}),\dots,\pi_{i_k}(\vec{H^k})) \\
& = \tr \ W_N\cdots W_{i_k+1}H^k_{i_k}W_{i_k-1} \cdots W_{i_1+1}H^1_{i_1}W_{i_1-1} \cdots W_1XY^T
\end{aligned}
\end{equation*}
Consider the associated homogeneous polynomial $P(\vec{H})=D^kf(\vec{W})(\vec{H},\dots,\vec{H})$. It is non-constant and its degree is $k$. Note that
\[
P(\vec{H}) = -k! \ \Theta(\vec{H},\dots,\vec{H}) = -k! \ \vartheta(\pi_{i_1}(\vec{H}),\dots,\pi_{i_k}(\vec{H})).
\]
The above formula shows that $P$ fulfills the hypotheses of Lemma~\ref{lemma_trace_zero}. A direct application of this lemma shows that $\tr\ \partial^2P(\vec{H})=0$ for every $\vec{H}$. By Lemma~\ref{lemma_spectrum_crucial}, if $SV=\{\vec{H}\in V:\|\vec{H}\|=1\}$ and $\vec{H}_0\in SV$ is a critical point of $p=P|_{SV}$ satisfying $p(\vec{H}_0)\geq0$, then the Morse index of $\vec{H}_0$ is positive. In other words, $\vec{W}$ is a weakly strict saddle point of $f$.
\end{proof}

\section{Proof of Theorem~\ref{thm_weak_center_stable}}

The smooth Riemannian manifold $(M,g)$, the smooth function $f:M\to\R$ and the tamed weakly strict saddle point $x_0$ are fixed throughout this section. Assume that $f(x_0)=0$ without loss of generality. By Definition~\ref{defn_weak_saddle}, the jet of $f$ does not vanish to $\infty$ order at $x_0$, i.e. for some $k\geq2$, $f$ attains its value at $x_0$ with order $k-1$ but not with order $k$.

\subsection{Normal polar coordinates}
\label{ssec_normal_polar}

Let $\mathcal{O} \subset TM$ be the set consisting of tangent vectors $v$ such that the unique geodesic with initial velocity $v$ at time $t=0$ is defined up to time $t=1$. Then $\mathcal{O}$ is open and the exponential map is the smooth map defined as
\[
\exp:\mathcal{O}\to M, \qquad \exp(v) = \gamma(1)
\]
where $\gamma(t)$ is the geodesic satisfying $\dot\gamma(0)=v$. Note that $\mathcal{O}$ is a fiberwise star-shaped subset of $TM$. 

Given $\rho>0$, denote by $B_\rho$ the open $\rho$-ball $\{v\in T_{x_0}M:g(v,v)<\rho^2\}$. If $\rho$ is small enough, $\exp|_{B_\rho}:B_\rho\to U$ is a diffeomorphism onto an open neighborhood $U$ of $x_0$ in $M$. In fact, $U$ is the $\rho$-ball centered at $x_0$ with respect to the distance induced by $g$. A choice of $g$-orthonormal basis of $T_{x_0}M$ then determines a smooth coordinate system $w=(w_1,\dots,w_d)$ on $U$ via this diffeomorphism, where $w$ varies on the open Euclidean $\rho$-ball in $\R^d$. These coordinates are called normal coordinates at $x_0$. We might abuse notation and sometimes see $B_\rho$ as the open $\rho$-ball in the Euclidean $\R^d$. The metric $g$ gets represented as 
\begin{equation}
\label{g_in_normal_coords}
\sum_{i,j=1}^n \left(\delta_{ij} + r^2b_{ij}\right) dw_i\otimes dw_j
\end{equation}
for suitable smooth functions $b_{ij}$ satisfying $b_{ij}=b_{ji}$. Here $r^2=\sum_{i=1}^d w_i^2$. The Euclidean metric on $\R^d$ is denoted by 
\[
g_0
=\sum_{i=1}^d dw_i\otimes dw_i
\]
and the Euclidean norm by  $\|w\|=\sqrt{g_0(w,w)}=\sqrt{w_1^2+\dots+w_d^2}$. Then~\eqref{g_in_normal_coords} becomes
\begin{equation}
g = g_0 + r^2b, \qquad b = \sum_{i,j=1}^d b_{ij} \, dw_i\otimes dw_j.
\end{equation}
Let $\S^{d-1} = \{ w \in \R^d \colon \|w\|=1 \}$ be the unit sphere and $\iota:\S^{d-1} \to \R^d$ be the inclusion map. Then $h = \iota^*g_0$ will be referred to as the Euclidean metric on~$\S^{d-1}$. 

Let $\phi$ be the ``polar coordinates map'' 
\[
\phi:\R\times \S^{d-1} \to \R^d, \qquad \phi(r,u)=ru.
\]
It defines a $C^\infty$-diffeomorphism $(0,+\infty)\times \S^{d-1} \simeq \R^d \setminus \{0\}$. The differential $d\phi(r,u) : \R\times T_u\S^{d-1} \to T_{ru}\R^d = \R^d$ is the linear map 
\[
(\delta r,\delta u) \mapsto \delta r \, u + r \, \delta u
\]
as  shown by a direct calculation. We always identify 
\[
T_{(r,u)}\R\times \S^{d-1} = T_r\R\times T_u\S^{d-1} = \R\times T_u\S^{d-1}
\]
so that vector fields on some open set $\mathcal{U}\subset\R\times \S^{d-1}$ can be seen as functions
\[
V:\mathcal{U} \to \R\times T\S^{d-1}, \qquad V(r,u) = (V_1(r,u),V_2(r,u)) \in \R\times T_u\S^{d-1},
\]
in other words, functions $V:\mathcal{U}\to\R\times \R^d$ satisfying $g_0(V_2(r,u),u)=0$. 

On $\R\times \S^{d-1}$ consider the Riemannian metric $g_{\mathrm{ref}} = dr \otimes dr + h$ where we write $h$ to denote the pull-back of $h$ by the projection $\R\times \S^{d-1}\to \S^{d-1}$ with no fear of ambiguity. On $(-\rho,\rho)\times \S^{d-1}$ we also have two smooth $(0,2)$-tensors $\phi^*g$, $\phi^*g_0$. They define Riemannian metrics on $(0,\rho)\times \S^{d-1}$ that become degenerate at $r=0$.  We would like to write $\phi^*g$ with respect to $g_{\mathrm{ref}}$ as 
\begin{equation}
\label{g_in_terms_of_g_ref}
\phi^*g(\cdot,\cdot)=g_{\mathrm{ref}}(\Lambda\cdot,\cdot)
\end{equation}
where $\Lambda$ is the smooth field on $(-\rho,\rho)\times \S^{d-1}$ of $g_{\mathrm{ref}}$-symmetric linear maps
\[
\Lambda(r,u) : T_{(r,u)}(\R\times \S^{d-1})\to T_{(r,u)}(\R\times \S^{d-1})
\]
determined by~\eqref{g_in_terms_of_g_ref}. Consider the smooth matrix-valued function $B=[b_{ij}]$ defined on the $\rho$-ball. It satisfies $B^T=B$ pointwise. For each $u\in \S^{d-1}$, $\pi_uw = w-g_0(w,u)u$ denotes the $g_0$-orthogonal projection onto $T_u\S^{d-1}$. Let smooth functions $\lambda$, $v$ and~$\alpha$ be defined on $(-\rho,\rho)\times \S^{d-1}$ by
\begin{equation}
\label{fncts_lambda_v_alpha}
\begin{aligned}
\lambda(r,u) &= b(ru)(u,u) =g_0(B(ru)u,u)\\
v(r,u) &= \pi_uB(ru)u\\
\alpha(r,u)&=\pi_uB(ru)
\end{aligned}
\end{equation}
respectively. Note that $\lambda(r,u)\in\R$, $v(r,u)\in T_u\S^{d-1}$, $\alpha(r,u):T_u\S^{d-1}\to T_u\S^{d-1}$ is an $h$-symmetric linear map. A direct calculation shows that
\begin{equation}
\Lambda = 
\begin{pmatrix}
1+r^2\lambda & r^3h(v,\cdot) \\ r^3v & r^2I+r^4\alpha
\end{pmatrix}
\end{equation}
where $\Lambda$ is written in blocks with respect to the splitting $\R\times T_u\S^{d-1}$. If $|r|>0$ is small enough then $\Lambda(r,u)$ is invertible, and one readily computes its inverse to be
\begin{equation}
\label{Lambda_inverse}
\Lambda^{-1} =
\begin{pmatrix}
c & -rc\,h(w,\cdot) \\
-rcw & r^{-2}(I+r^2\alpha)^{-1} + r^2c\,h(w,\cdot)w
\end{pmatrix}
\end{equation}
where
\begin{equation}
\label{fncts_w_c}
\begin{aligned}
w(r,u) &= (I+r^2\alpha)^{-1}v  \\
c(r,u) &= \frac{1}{1+r^2\lambda-r^4\,h(v,(I+r^2\alpha)^{-1}v)}
\end{aligned}
\end{equation}
It is important to note that $w(r,u)\in T_u\S^{d-1}$ and $c(r,u)\in\R$ are smooth functions defined for all $(r,u) \in(-\tau,\tau)\times \S^{d-1}$ provided $\tau>0$ is small enough, but $\Lambda(r,u)^{-1}$ is not defined for $r=0$. 

We use the above to compute the gradient with respect to $\phi^*g$ of a $C^1$ function $F$ defined on an open subset of $(0,\rho)\times \S^{d-1}$. This is the vector field $\nabla^{\phi^*g}F=(a,z)$ determined by
\[
g_{\mathrm{ref}}((D_1F,\nabla F^r),\cdot)=dF = \phi^*g(\nabla^{\phi^*g}F,\cdot) = g_{\mathrm{ref}}(\Lambda \nabla^{\phi^*g}F,\cdot)
\]
where
\[
D_1F(r,u) = \lim_{t\to0} \frac{F(r+t,u)-F(r,u)}{t},
\]
$F^r:\S^{d-1}\to\R$ is the function $F^r=F(r,\cdot)$, and $\nabla F^r$ is its $h$-gradient. Hence,
\begin{equation*}
\begin{aligned}
\begin{pmatrix} a \\ z \end{pmatrix}
&= \Lambda^{-1} \begin{pmatrix} D_1F \\ \nabla F^r \end{pmatrix} 
\end{aligned} 
\end{equation*}
from where we get
\begin{equation}
\label{main_formula_gradient}
\begin{aligned} 
a(r,u) &= c\, \big(D_1F - r\,h(w,\nabla F^r)\big) \\ 
z(r,u) &= -c\, r \big(  D_1F - r\,h(w,\nabla F^r)\big)w + r^{-2}(I+r^2\alpha)^{-1}\nabla F^r 
\end{aligned}
\end{equation}
in view of~\eqref{Lambda_inverse}.

\subsection{The blow-up construction}
\label{ssec_blow_up}

Let $w=(w_1,\dots,w_d)$ be normal coordinates defined on the image~$U$  by the exponential map of an open ball in $T_{x_0}M$ centered at the origin of small radius $\rho>0$, as in subsection~\ref{ssec_normal_polar}. In these coordinates $x_0\simeq 0$. Write $f=f(w)$ on $U$, with no fear of ambiguity. There is a non-zero homogeneous polynomial $P:\R^d\to\R$ of degree $k$ such that
\begin{equation}
f(w) = P(w) + \varepsilon(w)
\end{equation}
where $\varepsilon$ is smooth and satisfies 
\begin{equation}
\label{estimate_varepsilon}
\text{$\varepsilon(w)= O(\|w\|^{k+1})$ as $\|w\|\to 0^+$.}
\end{equation}
Define for $(r,u) \in (-\rho,\rho) \times \S^{d-1}$
\begin{equation*}
F(r,u) := f(ru) = r^kp(u) + \varepsilon(ru)
\end{equation*}
where $p:\S^{d-1} \to \R$ is defined by $p = P|_{\S^{d-1}}$. Note that $F(r,u)$ is a smooth function $(-\rho,\rho) \times \S^{d-1} \to \R$. Hence, we can find smooth functions
\begin{equation}
\begin{aligned}
& \theta:(-\rho,\rho) \times \S^{d-1}\to\R \\
& Z:(-\rho,\rho) \times \S^{d-1}\to T\S^{d-1}
\end{aligned}
\end{equation}
satisfying $Z(r,u)\in T_u\S^{d-1}$ and
\begin{equation}
\label{exp_grad_at_r=0}
\begin{aligned}
D_1F(r,u) &= r^{k-1}kp(u) +r^k\theta(r,u) \\
\nabla F^r(u) &= r^k\nabla p(u)+r^{k+1}Z(r,u)
\end{aligned}
\end{equation}
where $\nabla p$ is the $h$-gradient of $p$. This shows that $\nabla^{\phi^*g}F$, which in principle was well-defined and smooth only on $(0,\rho)\times
S^{d-1}$, extends smoothly to $(-\rho,\rho)\times\S^{d-1}$. But, in fact, more is true: plugging~\eqref{exp_grad_at_r=0} into~\eqref{main_formula_gradient} we obtain
\begin{equation}
\begin{aligned}
r^{2-k}a(r,u) &= rkp(u)+r^2\theta(r,u) + r^3\hat{\theta}(r,u) \\
r^{2-k}z(r,u) &= \nabla p(u)+rZ(r,u) + r^2\hat{Z}(r,u)
\end{aligned}
\end{equation}
for suitable smooth functions $\hat{\theta},\hat{Z}$ defined on $(-\tau,\tau)\times\S^{d-1}$ for some $0<\tau<\rho$ small. Here~\eqref{fncts_lambda_v_alpha} and~\eqref{fncts_w_c} are strongly used. The following lemma follows immediately.

\begin{lemma}
\label{lemma_C1_extension}
\label{lemma_extension_of_gradient}
The vector field $r^{2-k}\nabla^{\phi^*g}F$ admits a smooth extension to a vector field~$X$ on $(-\rho,\rho) \times \S^{d-1}$ that satisfies $X(0,u)=(0,\nabla p(u))$. If $\sigma\subset\R$ is the spectrum of the Hessian of $p$ at some $u_*\in\CRIT(p)$, then the spectrum of the linearization of $X$ at the corresponding rest point $(0,u_*) \in \{0\}\times \S^{d-1}$ is precisely $\{kp(u_*)\}\cup\sigma$.
\end{lemma}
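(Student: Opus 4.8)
The plan is to establish the two assertions of Lemma~\ref{lemma_C1_extension} separately. For the smooth extension, I would simply combine the expansions derived immediately above the statement: equation~\eqref{exp_grad_at_r=0} shows $D_1F$ and $\nabla F^r$ each factor as $r^{k-1}$ (resp.\ $r^k$) times a smooth function, and plugging these into the gradient formula~\eqref{main_formula_gradient}, together with the smoothness of $c$, $w$, $\alpha$ on $(-\tau,\tau)\times\S^{d-1}$ guaranteed by~\eqref{fncts_lambda_v_alpha}--\eqref{fncts_w_c}, yields that $r^{2-k}a$ and $r^{2-k}z$ are smooth on $(-\tau,\tau)\times\S^{d-1}$ with the stated values at $r=0$, namely $a(0,u)=0$ and $z(0,u)=\nabla p(u)$ (using $(I+r^2\alpha)^{-1}\to I$ and $c\to1$ as $r\to0$). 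So $X:=r^{2-k}\nabla^{\phi^*g}F$ extends smoothly, first to $(-\tau,\tau)\times\S^{d-1}$; extension to all of $(-\rho,\rho)\times\S^{d-1}$ is automatic since for $r\neq0$ the field $r^{2-k}\nabla^{\phi^*g}F$ is already defined and smooth there and agrees with the local extension on the overlap.

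For the spectral statement, fix $u_*\in\CRIT(p)$ and write everything in a chart for $\R\times\S^{d-1}$ near $(0,u_*)$ adapted to the product splitting $\R\times T_{u_*}\S^{d-1}$. The point $(0,u_*)$ is a rest point of $X$ because $X(0,u_*)=(0,\nabla p(u_*))=(0,0)$. The linearization $DX(0,u_*)$ acts on $\R\times T_{u_*}\S^{d-1}$, so I would compute it blockwise. The key observation is that along $\{0\}\times\S^{d-1}$ the first component $a$ satisfies $a=O(r)$ and the second component $z$ equals $\nabla p(u)+O(r)$; differentiating at $(0,u_*)$, the $\partial_r$-derivative of the first component picks out the coefficient of $r$ in $r^{2-k}a$, which from the expansion $r^{2-k}a=rkp(u)+O(r^2)$ equals $kp(u_*)$, while the $T_{u_*}\S^{d-1}$-derivative of the first component vanishes because $a=O(r)$ has no $r^0$ term depending on $u$. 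Thus $DX(0,u_*)$ is block lower-triangular (in fact block diagonal on the relevant level): the $r$-block contributes the eigenvalue $kp(u_*)$, and the $T_{u_*}\S^{d-1}$-block is the linearization of $u\mapsto\nabla p(u)$ at $u_*$, which by the standard identification of the Hessian of a function at a critical point with the derivative of its gradient is precisely the $h$-symmetric operator representing $D^2p(u_*)$, hence has spectrum $\sigma$. Triangularity then gives $\mathrm{spec}\,DX(0,u_*)=\{kp(u_*)\}\cup\sigma$.

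The only genuinely delicate point is verifying that the off-diagonal block from the $r$-direction into $T_{u_*}\S^{d-1}$ does not contribute — i.e.\ that $\partial_r z$ evaluated at $(0,u_*)$, even if nonzero, sits in a position that does not affect the spectrum. This is handled by triangularity: one orders the coordinates as $(r$-direction, then $\S^{d-1}$-direction$)$ and notes that $\partial_u a|_{(0,u_*)}=0$, so regardless of the value of $\partial_r z|_{(0,u_*)}$ the matrix of $DX(0,u_*)$ is block lower-triangular, and its spectrum is the union of the spectra of the diagonal blocks. I expect this bookkeeping — keeping careful track of which partial derivatives vanish at $r=0$ purely for order-of-vanishing reasons, and correctly identifying $D(\nabla p)(u_*)$ with $\nabla^2 p(u_*)$ on $T_{u_*}\S^{d-1}$ — to be the main thing to get right; the computation itself is routine given the expansions already in hand.
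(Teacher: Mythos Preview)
Your proposal is correct and follows exactly the approach the paper intends: the paper derives the expansions $r^{2-k}a=rkp(u)+r^2\theta+r^3\hat\theta$ and $r^{2-k}z=\nabla p(u)+rZ+r^2\hat Z$ and then simply states that the lemma ``follows immediately,'' and your argument is precisely the routine verification of that claim. Your observation that $DX(0,u_*)$ is block lower-triangular (with the possibly nonzero entry $\partial_r z|_{(0,u_*)}=Z(0,u_*)$ sitting below the diagonal) is the right way to read off the spectrum, and matches what the expansions give.
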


\subsection{Controlling trajectories}

\begin{definition}
\label{defn_omega_limit_sets}
The following notions of~$\omega$-limit sets will be useful:
\begin{itemize}
\item Let $X$ be a topological space and ~$c:[0,+\infty) \to X$ be continuous. Its~$\omega$-limit set is defined as $$ \omega(c) = \{ x \in X : \exists t_n \to+\infty \ \text{such that} \ c(t_n) \to x \} \, . $$
\item Let~$w_0 \in M$ and $c:[0,+\infty) \to M \setminus\{w_0\}$ be continuous. Assume that $w_0 \in \omega(c)$. The secondary~$\omega$-limit set
\[
\omega_2(c;w_0) \subset S_{w_0}M
\]
is defined as the set of $u_*$ for which there exist sequences $t_n\to+\infty$ and $u_n\in S_{w_0}M$ satisfying $$ r_n=\DIST(c(t_n),w_0)\to0, \qquad c(t_n)=\exp(r_nu_n), \qquad u_n\to u_*. $$
\end{itemize}
\end{definition}

\begin{lemma}
\label{rmk_omega_sets}
The following facts hold:
\begin{itemize}
\item $\omega$-limit sets are always closed. If $c$ has pre-compact image then $\omega(c)$ is compact connected and non-empty.
\item Assume $X$ is a locally compact metric space. Then:
\begin{itemize}
    \item If $\omega(c)$ is compact non-empty, then the image of $c$ is pre-compact.
    \item If $\omega(c)$ is disconnected then all its connected components are not compact.
\end{itemize}
\item If $w:[0,+\infty) \to M$ is an integral curve of~$-\nabla f$ then~$\omega(w)\subset\CRIT(f)$. If~$\omega(w)$ contains two distinct points then~$w$ has infinite length.
\end{itemize}
\end{lemma}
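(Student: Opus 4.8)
The plan is to verify the three bullet points in turn; all are classical and share a common mechanism: a curve that accumulates on two portions of its $\omega$-limit set (Definition~\ref{defn_omega_limit_sets}) that are ``separated'' by a compact set $S$ must cross $S$ at times tending to $+\infty$, and a limit point of those crossings is then a point of the $\omega$-limit set lying where it cannot. For the first bullet, closedness follows from the identity $\omega(c)=\bigcap_{T\ge0}\overline{c([T,+\infty))}$. If $c$ has pre-compact image, then $\omega(c)$ is a closed subset of the compact set $\overline{c([0,+\infty))}$, hence compact, and it is non-empty since $(c(n))_{n\in\N}$ has a convergent subsequence. For connectedness, suppose $\omega(c)=A\sqcup B$ with $A$, $B$ non-empty, disjoint and closed; then $\delta:=\DIST(A,B)>0$ as $A$, $B$ are disjoint non-empty compacta in a metric space. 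Since $c$ comes arbitrarily close to $A$ but also stays at distance at least $\delta/2$ from $A$ at arbitrarily large times (because it returns near $B$), the intermediate value theorem applied to $t\mapsto\DIST(c(t),A)$ gives $\tau_n\to+\infty$ with $\DIST(c(\tau_n),A)=\delta/2$; a subsequential limit $x$ of $(c(\tau_n))$ lies in $A\cup B$ yet satisfies $\DIST(x,A)=\delta/2$, contradicting $x\in A$ and contradicting $x\in B$.

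For the second bullet, let $X$ be locally compact metric. If $\omega(c)$ is compact and non-empty, enclose it in a relatively compact open set $U$ (the union of finitely many relatively compact open sets covering $\omega(c)$). If $c$ left $U$ at times tending to $+\infty$ it would --- since it also returns to $U$, accumulating at a point of $\omega(c)\subset U$ --- meet the compact set $\partial U$ at times tending to $+\infty$ by connectedness, yielding a point of $\omega(c)\cap\partial U=\emptyset$; so $c(t)\in U$ for all large $t$ and $\overline{c([0,+\infty))}\subset c([0,T_0])\cup\overline U$ is compact. For the last assertion, assume $\omega(c)$ is disconnected; then it is non-empty, and non-compact by the previous step together with the first bullet. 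Were some component $K$ of $\omega(c)$ compact, choose a relatively compact open $W\supset K$; since $\overline W\cap\omega(c)$ is compact with $K$ as a component, $K$ equals the intersection of its clopen neighbourhoods in $\overline W\cap\omega(c)$, and a finite-intersection argument (using that the compact set $\partial W\cap\omega(c)$ misses $K$) produces a set $L$, clopen in $\overline W\cap\omega(c)$, with $K\subset L\subset W$; then $L$ is compact and clopen in $\omega(c)$. As $\omega(c)$ is not compact, $\omega(c)\setminus L\ne\emptyset$; it is closed in $X$, so $\delta:=\DIST(L,\omega(c)\setminus L)>0$, and intersecting a relatively compact open neighbourhood of $L$ with the $\delta/3$-neighbourhood of $L$ yields a relatively compact open $V$ with $L\subset V$ and $\overline V\cap\omega(c)=L$. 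Now $c$ accumulates on $L\subset V$ and on the non-empty set $\omega(c)\setminus L$, which is disjoint from $\overline V$; so $c$ meets the compact set $\partial V$ at times tending to $+\infty$, and a limit point of those crossings lies in $\overline V\cap\omega(c)=L\subset V$ and simultaneously in $\partial V=\overline V\setminus V$ --- a contradiction.

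For the third bullet, along an integral curve $w$ of $-\nabla f$ one has $\tfrac{d}{dt}f(w(t))=-\|\nabla f(w(t))\|_g^2\le0$, so $f\circ w$ is non-increasing; if $\omega(w)\ne\emptyset$ it has a subsequential limit, hence $f(w(t))$ converges to a finite value $L$ and $\int_0^{+\infty}\|\nabla f(w(t))\|_g^2\,dt=f(w(0))-L<+\infty$. Let $x\in\omega(w)$ and suppose $\nabla f(x)\ne0$. Choose $\rho>0$ so small that $B=\{w'\in M:\DIST(w',x)\le2\rho\}$ is compact and $\|\nabla f\|_g\ge\eta>0$ on $B$. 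The curve returns to $\{\DIST(\cdot,x)<\rho\}$ at times tending to $+\infty$. If it eventually stays in $B$, then $\int_T^{+\infty}\|\nabla f(w)\|_g^2\,dt\ge\int_T^{+\infty}\eta^2\,dt=+\infty$ for a suitable $T$, which is impossible; otherwise it also leaves $B$ at times tending to $+\infty$, and hence there are disjoint intervals $[a_n,b_n]$ with $a_n\to+\infty$, $\DIST(w(a_n),x)=\rho$, $\DIST(w(b_n),x)=2\rho$ and $w([a_n,b_n])\subset B$; on each of them $\|\dot w\|_g=\|\nabla f(w)\|_g\ge\eta$ and $\int_{a_n}^{b_n}\|\dot w\|_g\,dt\ge\DIST(w(a_n),w(b_n))\ge\rho$, so $\int_{a_n}^{b_n}\|\nabla f(w)\|_g^2\,dt\ge\eta\rho$, and summing over $n$ contradicts integrability. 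Therefore $\nabla f(x)=0$, i.e.\ $\omega(w)\subset\CRIT(f)$. For the length statement, if $x\ne y$ both lie in $\omega(w)$, set $3\rho=\DIST(x,y)>0$; the curve visits $\{\DIST(\cdot,x)<\rho\}$ and $\{\DIST(\cdot,y)<\rho\}$ alternately at times tending to $+\infty$, so there are infinitely many disjoint time intervals on each of which $w$ runs from one set to the other between endpoints at distance $\ge\rho$, whence that piece has length $\ge\rho$ and the total length $\int_0^{+\infty}\|\dot w\|_g\,dt$ of $w$ is infinite. The only step that is more than a routine compactness-and-connectedness argument is the second assertion of the middle bullet, which relies on the point-set fact that a compact connected component of a closed subset of a locally compact Hausdorff space has arbitrarily small relatively compact open neighbourhoods whose topological boundary misses that subset --- itself a consequence of the coincidence of components and quasi-components in compact Hausdorff spaces.
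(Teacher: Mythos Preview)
Your proof is correct. For the first bullet and the first half of the second bullet you argue essentially as the paper does (the paper obtains connectedness of $\omega(c)$ in the pre-compact case as an intersection of nested compact connected sets rather than by your direct crossing argument, but this is a minor variation). The paper declines to prove the third bullet, calling it standard, so your treatment there is simply more complete.

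The one genuine difference is in the second half of the middle bullet. The paper argues constructively: given a compact connected subset $\omega_*\subset\omega(c)$, it builds a set $Q$ consisting of all limits of $c(s_n)$ along sequences $t_n\le s_n$ with $t_n\to+\infty$, $\DIST(c(t_n),\omega_*)\to 0$ and $c([t_n,s_n])$ contained in a fixed compact neighbourhood of $\omega_*$; it then shows $Q$ is compact, connected, contained in $\omega(c)$, and strictly larger than $\omega_*$, so $\omega_*$ cannot be a maximal connected subset. You instead invoke the fact that in a compact Hausdorff space components coincide with quasi-components to produce a compact set $L$ that is clopen in $\omega(c)$, and then run a boundary-crossing contradiction. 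Your route is shorter and more conceptual, at the cost of appealing to that point-set theorem; the paper's route is longer but entirely self-contained. Both reach the same conclusion.
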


\begin{proof}
It is clear from $\omega(c) = \cap_{n\in\N} cl\big(c([n,+\infty))\big)$ that $\omega(c)$ is closed. Here $cl$ denotes closure in $X$. If $cl\big(c([0,+\infty))\big)$ is compact then $\omega(c)$ is the intersection of a nested sequence of compact connected non-empty subsets and, as such, it is also compact connected and non-empty.




From now on assume that $(X,d)$ is a locally compact metric space. 

Suppose that $\omega(c)$ is compact non-empty. Then $K=\{x\in X:d(x,\omega(c))\leq\delta\}$ is compact provided $\delta>0$ is small enough. If there is no $T\geq0$ such that $c([T,+\infty)) \subset K$, then one finds $t'_n\to+\infty$ such that $d(c(t'_n),\omega(c))>\delta$. By definition of $\omega(c)$ we find $t_n\to+\infty$ such that $d(c(t_n),\omega(c))<\delta$. By the intermediate value theorem there exist $s_n\to+\infty$ satisfying $d(c(s_n),\omega(c))=\delta$. By compactness of $K$ we can assume, without loss of generality, that $c(s_n)\to w$, $d(w,\omega(c))=\delta$. This contradicts $w\in\omega(c)$. Hence, $c([T,+\infty)) \subset K$ for some $T\geq0$.

Assume now that $\omega(c)$ is disconnected. Hence, by the above, the image of $c$ is not pre-compact. We wish to show that an arbitrary connected component of $\omega(c)$ is not compact. To this end, it suffices to show that an arbitrary compact connected set $\omega_*\subset \omega(c)$ is contained in a strictly larger connected subset of $\omega(c)$. Choose $\delta>0$ small enough so that $K=\{w\in M:\DIST(w,\omega_*)\leq\delta\}$ is compact. Define a set $Q$ by requiring that $q\in Q$ if, and only if, there exist sequences $t_n\leq s_n$ such that: $t_n\to+\infty$, $\DIST(c(t_n),\omega_*) \to 0$, $c([t_n,s_n])\subset K$, $c(s_n)\to q$. It follows that $\omega_*\subset Q\subset K$. First we claim that $Q$ is closed. Let $q_k\in Q$, $q_k\to q\in K$, and consider corresponding sequences $t^k_n\leq s^k_n$ as above. For each $k$, there exists $n_k$ such that if $n\geq n_k$ then $t^k_n\geq k$, $\DIST(c(t^k_n),\omega_*)\leq \frac{1}{k}$, $d(c(s^k_n),q_k)\leq\frac{1}{k}$. Hence, taking $t'_k=t^k_{n_k}$, $s'_k=s^k_{n_k}$ we obtain $\DIST(c(t'_k),\omega_*) \to0$, $c(s'_k)\to q$ and $c([t'_k,s'_k])\subset K$. Thus, $q\in Q$. Having proved that $Q$ is closed, we conclude that $Q$ is compact since so is $K$. Now we show that $Q$ is connected. Consider $U,V\subset M$ open sets such that $Q\subset U\cup V$ and $Q\cap U\cap V=\emptyset$. Both $Q\cap U$ and $Q\cap V$ are closed in $Q$, hence compact. Hence, up to shrinking $U$ and $V$, it can be assumed that $U\cap V=\emptyset$. Since $\omega_*$ is connected we can assume, up to relabeling, that $\omega_*\subset U$. Assume, by contradiction, that $Q\cap V \neq\emptyset$. Choose $q\in Q\cap V$. By definition, we find $t_n\leq s_n$ such that $t_n\to+\infty$, $\DIST(c(t_n),\omega_*) \to 0$, $c([t_n,s_n])\subset K$, $c(s_n)\to q$. Hence, there exists $s'_n \in (t_n,s_n)$ such that $c(s'_n)\in K \setminus (U\cup V)$. There exists a subsequence $s'_{n_j}$ such that $c(s'_{n_j})\to q'\in K\setminus (U\cup V)$ as $j\to\infty$. Note that $\DIST(c(t_{n_j}),\omega_*)\to0$, $c([t_{n_j},s_{n_j}'])\subset c([t_{n_j},s_{n_j}])\subset K$ as $j\to\infty$. Hence, $q'\in Q\setminus (U\cup V)$ which is an absurd. Hence, $Q\subset U$. This proves that $Q$ is connected. The final task is now to show that $Q\setminus \omega_*\neq\emptyset$. Since the image of $c$ is not pre-compact and $\omega_*\subset\omega(c)$, we find $t_n <\tau_n$ such that, $t_n\to+\infty$, $\DIST(c(t_n),\omega_*)\to0$ and $\DIST(c(\tau_n),\omega_*) >\delta$. By the intermediate value theorem, we find $s_n \in (t_n,\tau_n)$ such that $c([t_n,s_n])\subset K$ and $\DIST(c(s_n),\omega_*)=\frac{\delta}{2}$. Up to choice of a subsequence, $c(s_n)$ converges to a point  $q\in Q$ satisfying $\DIST(q,\omega_*)=\frac{\delta}{2}$. In particular, $q\in Q\setminus \omega_*$. It has been proved that $Q$ is a connected subset of $\omega(c)$ such that $\omega_*\subset Q$ and $Q\setminus\omega_*\neq\emptyset$.

It remains to be shown that if $w:[0,+\infty) \to M$ is an integral curve of~$-\nabla f$ then~$\omega(w)\subset\CRIT(f)$.
This is standard and its proof will not be presented here.
\end{proof}

Recall the radius $\rho>0$ of the ball in $T_{x_0}M$ centered at the origin, fixed in subsection~\ref{ssec_normal_polar}.

\begin{lemma}
\label{lemma_simple_analysis}
For every $\delta>0$ there exists $0<R<\rho$ such that if $$ \gamma:[0,+\infty) \mapsto (0,R]\times \S^{d-1} \qquad \gamma(t) = (r(t),u(t)) $$ is an integral curve of $-X$ then $$ \begin{aligned} & \sup \ \{ \DIST(x,\CRIT(p)) : x\in \omega(u(t)) \} \leq \delta, \\ & \DIAM \ p\big(\omega(u(t))\big) = \sup \ \{ |p(x)-p(y)| : x,y \in \omega(u(t)) \} \leq \delta . \end{aligned} $$
\end{lemma}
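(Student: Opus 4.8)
The plan is to analyze the $\omega$-limit set $\Omega:=\omega(\gamma)$ of the curve $\gamma$ inside the ambient manifold $(-\rho,\rho)\times\S^{d-1}$ and to transfer the conclusions to $\omega(u(\cdot))$. Write $\pi:(-\rho,\rho)\times\S^{d-1}\to\S^{d-1}$ for the projection. Since $\gamma([0,+\infty))\subset(0,R]\times\S^{d-1}$ and the latter is pre-compact in $(-\rho,\rho)\times\S^{d-1}$, the set $\Omega$ is non-empty, compact, connected and invariant under the flow of $-X$ (by Lemma~\ref{rmk_omega_sets}), is contained in $[0,R]\times\S^{d-1}$, and satisfies $\omega(u(\cdot))=\pi(\Omega)$. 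It therefore suffices to show, choosing $R$ small, that $\pi(\Omega)$ lies in the $\delta$-neighborhood of $\CRIT(p)$ and that $p$ varies by at most $\delta$ on $\pi(\Omega)$. I would split $\Omega=\Omega^{+}\cup\Omega^{0}$, where $\Omega^{+}=\Omega\cap\{r>0\}$ and $\Omega^{0}=\Omega\cap\{r=0\}$.

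For $\Omega^{+}$ I would use that $f$ --- that is, $F$ in normal polar coordinates --- is a Lyapunov function. Since $X=r^{2-k}\nabla^{\phi^{*}g}F$ on $\{r>0\}$ and $dF=\phi^{*}g(\nabla^{\phi^{*}g}F,\cdot)$, along an integral curve of $-X$ in $\{r>0\}$ one has
\[
\frac{d}{dt}F(\gamma(t))=-\,r(t)^{2-k}\bigl\|\nabla^{\phi^{*}g}F(\gamma(t))\bigr\|_{\phi^{*}g}^{2}\le 0,
\]
so $F\circ\gamma$ decreases to a limit $F_{\infty}$ and $F\equiv F_{\infty}$ on $\Omega$. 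Every point of $\Omega^{+}$ lies on a full orbit contained in $\Omega^{+}$ (the invariant slice $\{r=0\}$ cannot be reached in finite time), along which $F$ is constant; the displayed identity then forces $\nabla^{\phi^{*}g}F\equiv 0$ on $\Omega^{+}$, i.e.\ for $(r,u)\in\Omega^{+}$ the point with normal coordinates $ru$ is a critical point of $f$. Feeding this into $f(w)=P(w)+\varepsilon(w)$ with $\varepsilon(w)=O(\|w\|^{k+1})$ gives $r^{k-1}\nabla P(u)+O(r^{k})=0$, hence $\|\nabla P(u)\|\le C_{0}r$ for a constant $C_{0}$ depending only on $f$ and $\rho$; since $\nabla P(u)=\nabla p(u)+kP(u)u$ with $\nabla p(u)\perp u$ on $\S^{d-1}$ (Euler's identity), this yields $\|\nabla p(u)\|\le C_{0}r$ and $|p(u)|\le C_{0}r$ for all $(r,u)\in\Omega^{+}$. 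As $p$ is continuous on the compact sphere with zero set $\CRIT(p)$, there is $\epsilon_{0}(\delta)>0$ with $\{\|\nabla p\|\le\epsilon_{0}(\delta)\}\subset\{\DIST(\cdot,\CRIT(p))<\delta\}$; I would then fix $R$ so small that $C_{0}R<\min\{\epsilon_{0}(\delta),\delta/2\}$, so that $\pi(\Omega^{+})$ lies in the $\delta$-neighborhood of $\CRIT(p)$ and $|p|\le\delta/2$ there.

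For $\Omega^{0}$ I would use that the slice $\{r=0\}\times\S^{d-1}$ is invariant under $-X$, on which $-X$ restricts to $-\nabla p$ (Lemma~\ref{lemma_extension_of_gradient}). The goal is $\Omega^{0}\subset\{0\}\times\CRIT(p)$, which settles the $\delta$-neighborhood condition on $\pi(\Omega^{0})$ for free. Then, if $\Omega^{+}\neq\emptyset$, a connectedness argument using the bound $|p|\le C_{0}r$ on $\Omega^{+}$ shows that the points of $\Omega^{0}$ lying in the closure of $\Omega^{+}$ have $p=0$, and since $\Omega$ is connected this forces $p\equiv 0$ on $\Omega^{0}$, whence $p(\pi(\Omega))\subset[-C_{0}R,C_{0}R]$; while if $\Omega^{+}=\emptyset$ then $\Omega=\Omega^{0}$ is a connected subset of the discrete set $\CRIT(p)$, hence a single point, and the diameter is $0$. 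To obtain $\Omega^{0}\subset\CRIT(p)$ I would invoke that $\Omega$, being the $\omega$-limit set of a pre-compact semiorbit, is internally chain transitive for the flow of $-X$, and combine this with the gradient structure of the real-analytic function $p$ on the compact sphere and the classical identification of the chain recurrent set of a gradient flow with its critical set --- the tameness hypothesis, i.e.\ finiteness of $\CRIT(p)$, being exactly what makes that identification apply.

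The hard part is this last step: one must rule out that the trace $\Omega^{0}$ of the limit set on the blow-up boundary is a nontrivial heteroclinic configuration of $p$, since the stated conclusion genuinely fails otherwise; hence a global dynamical input is needed, and I have routed it through internal chain transitivity of $\omega$-limit sets together with Conley's description of the chain recurrent set of a gradient flow. One should verify that these standard facts apply in the present locally compact, smooth-manifold setting. A more hands-on alternative avoids that machinery by exploiting the identity $\frac{d}{dt}\log r=-\bigl(k\,p(u)+O(r)\bigr)$ along $\gamma$: a curve confined to $r\le R$ cannot let its angular component linger where $p\le-\eta<0$, for otherwise $r$ escapes $(0,R]$; together with weak strictness --- which forces every sink of $-\nabla p$ to satisfy $p<0$ --- and with $F_{\infty}=0$ whenever $\Omega^{0}\neq\emptyset$ (so that $p(u(t))\ge-C_{0}r(t)$ along $\gamma$), this pins down the angular dynamics, but turning it into a complete proof requires care in the case where $u(t)$ does not converge.
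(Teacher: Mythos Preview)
Your approach is genuinely different from the paper's. The paper never analyzes $\omega(\gamma)$ as a subset of the blown-up cylinder and never invokes Conley theory; it works purely perturbatively with the angular component, using that $\dot u+\nabla p(u)=O(r)$ uniformly. For the diameter bound it fixes regular values $a_0<\dots<a_N$ of $p$ with $a_{i+1}-a_i<\delta$ and shows that when $\|\dot u+\nabla p(u)\|_{L^\infty}$ is small enough, $\dot u$ stays transverse to each level $p^{-1}(a_i)$, so $u^{-1}(p^{-1}([a_i,a_{i+1}]))$ is an interval and $\omega(u)$ is trapped in a single slab. For the distance bound it argues by contradiction: if there were curves $\gamma_n$ with $\|r_n\|_{L^\infty}\to0$ and points $x_n\in\omega(u_n)$ at distance $\geq\delta$ from $\CRIT(p)$, then near the limit $x_*$ one has $\|\nabla p\|\geq\eta>0$ on a definite neighborhood, and a direct estimate of $\frac{d}{dt}p(u_n(t))$ forces a uniform drop of $p$ over a fixed time window, contradicting the diameter bound already proved. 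Your $\Omega^+$ analysis is correct and in fact extracts more structure than the paper needs (it identifies $\Omega^+$ with critical points of $f$), but the paper's route is more elementary and uniform.

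The real gap in your plan is the step $\Omega^0\subset\{0\}\times\CRIT(p)$ in the case $\Omega^+\neq\emptyset$. Internal chain transitivity of $\Omega$ is for the flow of $-X$, and the natural Lyapunov function there is $V=p\circ\pi$: non-increasing along $-X$-orbits in $\Omega$ because $\Omega^+$ consists of rest points and on $\Omega^0$ the flow is $-\nabla p$. But Conley's identification of chain recurrence with rest points requires the set of rest values of $V$ to be nowhere dense, and here that set contains $p(\pi(\Omega^+))$, which by your own bound is only known to lie in an interval $[-C_0R,C_0R]$, not to be totally disconnected. So the cited machinery does not deliver $\Omega^0\subset\CRIT(p)$ in this case; nor can you reduce to internal chain transitivity of $\Omega^0$ for $-\nabla p$, since the restriction of an internally chain transitive set to a closed invariant subset need not be chain transitive. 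The lemma makes no hypothesis excluding other critical points of $f$ near $x_0$, so $\Omega^+\neq\emptyset$ cannot be ruled out a priori. (When $\Omega^+=\emptyset$ your argument is clean: then $\Omega\subset\{r=0\}$ is internally chain transitive for $-\nabla p$, and tameness forces it to be a single critical point.) The paper's perturbative argument is insensitive to whether $f$ has nearby critical points, which is precisely why it avoids this difficulty.
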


\begin{proof}
In the following we might use $\left<\cdot,\cdot\right>$ to denote the Riemannian metric $h=j^*g_0$ on $\S^{d-1}$ or the metric $g_0$ on $\R^d$, and $\|u\|=\|u\|_h$, $\|v\|=\|v\|_{g_0}$ to denote the corresponding norms of vectors $u\in T\S^{d-1}$, $v\in\R^d$. We might write $\|q\|_{L^\infty}$ instead of $\|q\|_{L^\infty([0,+\infty))}$ to denote the $L^\infty$-norm of a function $q:[0,+\infty)\to\R$.

The first step is to address the claim about $\DIAM \ p\big(\omega(u(t))\big)$. To this end consider $a<b$ regular values of $p$. We claim that there exists $L = L(a,b)>0$ such that if $u:[0,+\infty) \to \S^{d-1}$ is $C^1$ and satisfies 
\[
\|\dot u + \nabla p(u)\|_{L^\infty} < L, \qquad p(\omega(u(t))) \cap [a,b] \neq \emptyset 
\]
then
\[
\omega(u(t)) \subset p^{-1}((a,b)) 
\]
holds. In particular, $\DIAM \ p\big(\omega(u(t))\big) < b-a$. To prove the claim, note first that $K = p^{-1}([a,b])$ is a smooth compact manifold with boundary $\partial K = p^{-1}(a) \cup p^{-1}(b)$. On $p^{-1}(b)$ the vector field $-\nabla p$ points into $K$ transversely to the boundary, on $p^{-1}(a)$ the vector field $-\nabla p$ points out of $K$ transversely to the boundary. By compactness of $K$, there exists $L(a,b)>0$ such that if $\|\dot u+\nabla p(u)\|_{L^\infty}<L(a,b)$ then $\dot u(t)$ points into $K$ transversely to the boundary for all $t$ such that $p(u(t))=b$, and $\dot u(t)$ points out of $K$ transversely to the boundary for all $t$ such that $p(u(t))=a$. It follows that $u^{-1}(K)$ is an interval if $u^{-1}(K)\neq\emptyset$ and $\|\dot u+\nabla p(u)\|_{L^\infty}<L(a,b)$. In this case, if $\omega(u(t)) \cap K \neq \emptyset$ then the interval $u^{-1}(K)$ is not bounded from above and $u(t)$ stays at a positive distance from $\partial K$ when~$t$ is large enough. Hence, $\omega(u(t)) \subset K \setminus \partial K$ in this case. Note that if $p\big(\omega(u(t))\big) \cap [a,b] \neq \emptyset$ then $\omega(u(t)) \cap K \neq \emptyset$ and the above conclusions hold, that is, $p\big(\omega(u(t))\big) \subset (a,b)$ and $\omega(u(t)) \subset K \setminus \partial K$.

Now let $\delta>0$ be given arbitrarily. Consider regular values of $p$ $$ a_0 < \min p < a_1 < \dots < a_{N-1} < \max p < a_N $$ such that $a_{i+1}-a_i < \delta$ for all $0\leq i\leq N-1$. Choose $L$ such that 
\[
0 < L < \min_i \ L(a_i,a_{i+1})
\] holds, and suppose that $u:[0,+\infty) \to \S^{d-1}$ satisfies $\|\dot u + \nabla p(u)\|_{L^\infty} < L$.
Note that $p\big(\omega(u(t))\big)$ is a closed interval, since $\omega(u(t))$ is a connected compact subset of $\S^{d-1}$. We find $i$ such that $p(\omega(u(t))) \cap [a_i,a_{i+1}] \neq \emptyset$. Above it was shown that $p(\omega(u(t))) \subset (a_i,a_{i+1})$. Hence $\DIAM \ p\big(\omega(u(t))\big) < \delta$.

By Lemma~\ref{lemma_C1_extension} there exist $0<R_0<\frac{\rho}{2}$, $C>0$ such that
\begin{equation}
\label{X_2}
0<r\leq R_0 \ \Rightarrow \ |X_1(r,u)| + \|X_2(r,u) - \nabla p(u)\| \leq C r
\end{equation}
where we write $X=(X_1,X_2)$ according to $T_{(r,u)}\R\times \S^{d-1} = \R \times T_u\S^{d-1}$. Hence, a trajectory $\gamma(t)=(r(t),u(t))$ ($t\geq 0$) of $-X$ contained in $(0,R_0]\times \S^{d-1}$ necessarily satisfies $\|\dot u(t)-\nabla p(u(t))\|_{L^\infty} \leq C\|r\|_{L^\infty}$. Combining with what was proved before, one gets that $\DIAM \ p\big(\omega(u(t))\big)$ becomes arbitrarily small provided $\|r\|_{L^\infty}$ is small enough. 

For the remaining of the proof we argue by contradiction and assume that for some $\delta>0$ there exists a sequence $$ \gamma_n(t) = (r_n(t),u_n(t)):[0,+\infty) \to (0,\rho) \times \S^{d-1} \qquad (n\in \N) $$ of integral curves of $-X$ satisfying 
\begin{equation}
\label{curves_gamma_n_conditions}
\lim_{n\to +\infty} \ \|r_n\|_{L^\infty} = 0, \quad
\forall n \ \exists x_n \in \omega(u_n(t)): \ \DIST(x_n,\CRIT(p)) \geq \delta.
\end{equation}
Let $n_0 \in \N$ satisfy $n\geq n_0 \Rightarrow \|r_n\|_{L^\infty} \leq R_0$. By~\eqref{X_2} 
\begin{equation}
\label{esimate_from_radius}
\begin{aligned} 
n\geq n_0 \ \Rightarrow \ & |\dot r_n|+\|\dot u_n + \nabla p(u_n) \| _{L^\infty} \\
&\qquad= |X_1(r_n,u_n)|+\| \nabla p(u_n)-X_2(\gamma_n) \|_{L^\infty} \leq C\|r_n\|_{L^\infty}.
\end{aligned}
\end{equation}
By the first part of the proof
\begin{equation}
\label{diam_goes_to_zero}
\lim_{n\to+\infty} \ \DIAM \ p\big(\omega(u_n(t))\big) = 0.
\end{equation}
Up to choice of subsequence, assume $x_n \to x_*$. Then $\DIST(x_*,\CRIT(p)) \geq \delta$. Denote
$$ \eta = \|\nabla p(x_*)\| > 0, \qquad M = \sup \ \{|p(u)|+\|\nabla p(u)\| : u \in \S^{d-1} \}. $$
Recall the Riemannian metric $g_{\mathrm{ref}} = dr \otimes dr + h$ on $\R \times \S^{d-1}$. Note that we have $\|X(0,x_*)\|_{g_{\mathrm{ref}}} = \|\nabla p(x_*)\| = \eta$. Let $R_1>0$ and $U \subset \S^{d-1}$ be such that $2R_1<R_0$, $U$ is an open neighborhood of $x_*$ and
\begin{equation}
\label{size_X_g_1}
\forall (r,u) \in [0,R_1) \times U: \qquad \|X(r,u)\|_{g_{\mathrm{ref}}} \geq \frac{\eta}{2} \, .
\end{equation}
Let $d_{\mathrm{ref}}$ be the distance function on $\R\times \S^{d-1}$ associated to~$g_{\mathrm{ref}}$ and consider a closed neighborhood~$W \subset U$ of~$x_*$ in~$\S^{d-1}$ such that the $d_{\mathrm{ref}}$-distance from $[0,R_1/2]\times W$ to the complement of $[0,R_1) \times U$ in $[0,+\infty) \times \S^{d-1}$ is a positive number $\rho_1$. Choose 
\[
M' > \sup \ \{\|X(r,u)\|_{g_{\mathrm{ref}}} : (r,u) \in [0,R_0]\times \S^{d-1} \}.
\]
If $\alpha:[a,b] \to [0,+\infty) \times \S^{d-1}$ is $C^1$, $\alpha(a) \in [0,R_1/2]\times W$, $\alpha(b) \not\in [0,R_1) \times U$ and $\|\dot\alpha(t)\|_{g_{\mathrm{ref}}} \leq m$ holds for all $t\in [a,b]$, then
\begin{equation}
\label{est_b-a}
\rho_1 \leq \int_a^b \|\dot\alpha(t)\|_{g_{\mathrm{ref}}} dt \leq (b-a)m \quad \Rightarrow \quad b-a \geq \frac{\rho_1}{m} \, .
\end{equation}
Set $\tau = \frac{\rho_1}{2M'}$. We claim that for all $n\in\N$ and $t_* \geq \tau$ such that $\gamma_n(t_*) \in [0,R_1/2] \times W$ the following holds:
$$ 
\gamma_n([t_*-\tau,t_*+\tau]) \subset [0,R_1) \times U. 
$$
For if not, then there would exist $t'\in [t_*-\tau,t_*+\tau]$ such that $\gamma_n(t')\not\in[0,R_1)\times U$, and the closed interval $J\subset[t_*-\tau,t_*+\tau]$ bounded by $t'$ and $t_*$ would satisfy $\gamma_n(J) \subset [0,R_0] \times \S^{d-1}$. Since $\|\dot\gamma_n(t)\|_{g_{\mathrm{ref}}} < M'$ holds for all $t\in J$ in view of the definition of $M'$, estimate~\eqref{est_b-a} implies $\tau\geq |t_*-t'| \geq\frac{\rho_1}{M'}$, in contradiction to the definition of $\tau$. Hence, 
\[
\forall t\in[t_*-\tau,t_*+\tau]: \qquad \|X(\gamma_n(t))\|_{g_{\mathrm{ref}}} \geq \frac{\eta}{2}. 
\]
holds for all $n$ and $t_*\geq\tau$ satisfying $\gamma_n(t_*) \in[0,R_1/2]\times W$. 

Let $n_1$ be such that $n\geq n_1$ implies $C\|r_n\|_{L^\infty} \leq \eta/4$. If $n\geq\max\{n_0,n_1\}$ and $t_* \geq \tau$ is such that $\gamma_n(t_*) \in [0,R_1/2] \times W$, then for all $t\in [t_*-\tau,t_*+\tau]$ the following estimate holds:
$$
\begin{aligned}
\|\nabla p(u_n(t))\| 
& \geq \|\dot u_n(t)\| - \|\nabla p(u_n(t))+\dot u_n(t)\|  \\
& = |\dot r_n(t)| + \|\dot u_n(t)\| - |\dot r_n(t)| - \|\nabla p(u_n(t))+\dot u_n(t)\| \\
& \geq \sqrt{|\dot r_n(t)|^2 + \|\dot u_n(t)\|^2} - C\|r_n\|_{L^\infty} \\
& = \|X(\gamma_n(t))\|_{g_{\mathrm{ref}}} - C\|r_n\|_{L^\infty} \geq \frac{\eta}{2}-\frac{\eta}{4} = \frac{\eta}{4}.
\end{aligned}
$$
Note that
\begin{equation}
\label{p(u(t))_decay}
\begin{aligned}
&\frac{d}{dt}p(u_n(t)) = dp(u_n(t)) \, \dot u_n(t) \\ 
&= -\|\nabla p(u_n(t))\|^2 + \left< \nabla p(u_n(t)),\dot u_n(t) +\nabla p(u_n(t)) \right> \\ 
&\leq -\|\nabla p(u_n(t))\|^2 + MC\|r_n\|_{L^\infty([0,+\infty)])}
\end{aligned}
\end{equation}
holds provided $n\geq n_0$.
Let $n_2$ satisfy $n\geq n_2 \Rightarrow MC\|r_n\|_{L^\infty([0,+\infty)])} \leq \eta^2/32$.
Then, by~\eqref{p(u(t))_decay}, for all $n\geq \max\{n_0,n_1,n_2\}$ and $t_* \geq \tau \ \text{such that} \ \gamma_n(t_*) \in [0,R_1/2] \times W $, we have
$$
\begin{aligned}
p(u_n(t_*+\tau)) \leq p(u_n(t_*)) + \tau(-\eta^2/16 + MC\|r_n\|_{L^\infty}) \leq p(u_n(t_*)) -\tau\frac{\eta^2}{32} \, .
\end{aligned}
$$
Since $x_n \to x_*$, $x_n\in\omega(u_n(t))$, $x_*$ is an interior point of $W$, $\|r_n\|_{L^\infty}\to0$, we find that for each $n$ large enough then there exists sequence $t^j_{*} \to +\infty$ ($j\to+\infty$), such that $\gamma_n(t^j_*)\in [0,R_1/2]\times W$. The above gives $p(u_n(t^j_*+\tau)) \leq p(u_n(t^j_*)) - \tau\eta^2/32$. With such $n$ fixed, up to choice of subsequence, we find $x',x'' \in \omega(u_n)$ such that $u_n(t^j_*+\tau)\to x'$, $u_n(t^j_*)\to x''$ as $j\to+\infty$. By the continuity of $p$, $p(x'')-p(x')\geq \tau\eta^2/32$. It follows that $p\big(\omega(u_n(t))\big)$ has diameter larger than $\tau\frac{\eta^2}{32}>0$ provided $n$ is large enough. This is in contradiction to~\eqref{diam_goes_to_zero}.
\end{proof}

The following lemma is well-known for real-analytic functions on Euclidean space. We include a proof for the sake of completeness since we need it for general real-analytic Riemannian manifolds.

\begin{lemma}
\label{lemma_omega_limit_set_one_pt}
Let~$w:[0,+\infty) \to M$ be a non-constant solution of $\dot w = -\nabla f(w)$. Assume that~$(M,g)$ and~$f$ are real-analytic, and that~$\omega(w)$ is compact and non-empty. Then there exists a critical point $y_0$ of~$f$ such that $\lim_{t\to+\infty} w(t) = y_0$.
\end{lemma}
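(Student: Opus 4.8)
The plan is to run the classical Łojasiewicz argument for gradient flows, adapted to the real-analytic Riemannian setting. First I would normalize the situation. Along the flow $\frac{d}{dt}f(w(t)) = -\|\nabla f(w(t))\|_g^2 \leq 0$, so $f(w(t))$ is non-increasing; since the trajectory is pre-compact (Lemma~\ref{rmk_omega_sets}, as $\omega(w)$ is compact and non-empty in the locally compact space $M$), $f$ is bounded on its image and $f(w(t))$ converges to some $c\in\R$. For any $y\in\omega(w)$ one has $f(y)=c$, so $f\equiv c$ on $\omega(w)$; replacing $f$ by $f-c$ we may assume $c=0$, whence $f(w(t))\geq 0$ for all $t$. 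If $f(w(t_0))=0$ for some $t_0$, then $f(w(t))\equiv 0$ and hence $\nabla f(w(t))\equiv 0$ for $t\geq t_0$, so $w(t)=w(t_0)$ for $t\geq t_0$ and we are already done; hence I may assume $f(w(t))>0$ for all $t\geq 0$.

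The second step is to make the Łojasiewicz gradient inequality available near $\omega(w)$. Fixing $y_0\in\omega(w)$ and a real-analytic chart around it, $f$ is real-analytic in that chart and vanishes at $y_0$, so the Euclidean Łojasiewicz inequality yields $\theta_{y_0}\in(0,\HALF]$, $C_{y_0}>0$ and a neighborhood on which $|f|^{1-\theta_{y_0}}\leq C_{y_0}\,|\nabla_{\mathrm{eucl}}f|$. Because the matrix of $g$ in the chart is real-analytic and positive definite, the Euclidean and $g$-gradient norms are comparable on a (possibly smaller) neighborhood, so there $|f|^{1-\theta_{y_0}}\leq C_{y_0}'\,\|\nabla f\|_g$. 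Covering the compact set $\omega(w)$ by finitely many such neighborhoods, shrinking them so that $|f|\leq 1$ on each (which lets me replace every exponent by the smallest one, $\theta:=\min_i\theta_{y_i}$), and setting $C:=\max_i C'_{y_i}$, I obtain an open neighborhood $\mathcal{U}$ of $\omega(w)$ together with $\theta\in(0,\HALF]$ and $C>0$ such that $|f(x)|^{1-\theta}\leq C\,\|\nabla f(x)\|_g$ for all $x\in\mathcal{U}$.

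For the conclusion: since the trajectory is pre-compact and $\omega(w)$ is compact and non-empty, $\DIST(w(t),\omega(w))\to 0$, so there is $T\geq 0$ with $w(t)\in\mathcal{U}$ for all $t\geq T$. For $t\geq T$ (if $\nabla f(w(t))$ vanishes at some such $t$ we are in the ``eventually constant'' case and done), using $f(w(t))>0$ and the Łojasiewicz bound,
$$
\frac{d}{dt}\big(f(w(t))^{\theta}\big) = -\theta\, f(w(t))^{\theta-1}\,\|\nabla f(w(t))\|_g^2 \leq -\frac{\theta}{C}\,\|\nabla f(w(t))\|_g = -\frac{\theta}{C}\,\|\dot w(t)\|_g .
$$
Integrating from $T$ to $t$ gives $\int_T^t\|\dot w(s)\|_g\,ds\leq \frac{C}{\theta}\,f(w(T))^{\theta}$, a finite bound independent of $t$. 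Hence $w$ has finite length on $[0,+\infty)$, so $t\mapsto w(t)$ is Cauchy for the Riemannian distance; the closure of the pre-compact trajectory is compact, hence complete, so $w(t)$ converges to some $y_0$ as $t\to+\infty$. Then $\omega(w)=\{y_0\}$ and $y_0\in\CRIT(f)$ by Lemma~\ref{rmk_omega_sets}.

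The hard part is really the second step: transferring the Łojasiewicz gradient inequality from Euclidean space to a real-analytic Riemannian manifold — this is exactly where real-analyticity of both $g$ and $f$ is used (the metric must be analytic so that passing to charts and comparing gradient norms preserves analyticity), and where one must patch the local inequalities into a single one valid on a neighborhood of the compact set $\omega(w)$ with uniform constants $\theta$ and $C$. Everything else is the standard finite-length bookkeeping.
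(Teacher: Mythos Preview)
Your argument is correct and follows the classical Łojasiewicz route: obtain a uniform gradient inequality on a neighborhood of the compact $\omega$-limit set, then differentiate the desingularizing function $f(w(t))^{\theta}$ to bound the arc-length of the trajectory directly by $\frac{C}{\theta}f(w(T))^{\theta}$. This is the standard and shortest path to the conclusion.

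The paper reaches the same endpoint by a longer detour. After establishing the uniform inequality $|f-c|^{\beta}\leq C\|\nabla f\|$ with $\beta\in(\tfrac12,1)$, it sets $h(t)=f(w(t))-c$, rewrites the inequality as $h^{\beta}\leq C\sqrt{-h'}$, and integrates the resulting differential inequality for $h^{1-2\beta}$ to obtain the polynomial decay $h(t)\lesssim t^{1/(1-2\beta)}$. Finite length is then deduced via Cauchy--Schwarz:
\[
\Big(\int_0^\infty\|\dot w\|\Big)^2 \leq \Big(\int_0^\infty -h'(t)\,t^{1+\epsilon}\,dt\Big)\Big(\int_0^\infty t^{-(1+\epsilon)}\,dt\Big),
\]
with the first factor controlled by integration by parts and the decay of $h(t)t^{1+\epsilon}$. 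Your approach is more economical; the paper's gives, as a by-product, an explicit decay rate for $f(w(t))-c$, which you do not extract (and do not need). One small remark: your aside that ``the metric must be analytic so that passing to charts and comparing gradient norms preserves analyticity'' overstates the role of $g$. What matters is that $M$ carries a real-analytic atlas so that $f$ is real-analytic in charts; comparability of $\|\nabla f\|_g$ with the Euclidean gradient norm on compacta requires only continuity of $g$.
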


\begin{proof}
By Remark~\ref{rmk_omega_sets} and the assumptions, $\omega(w)$ is a connected compact non-empty set consisting of critical points of~$f$.
Let $c \in \R$ be such that $\{c\}= f(\omega(w))$, i.e. $c$ is the critical value attained along~$\omega(w)$.
Note that~$f(w(t)) > c$ for all $t\geq 0$ since~$\lim_{t\to+\infty} f(w(t)) = c$ and~$f(w(t))$ is strictly decreasing.

The main ingredient in this proof is Lojasiewicz's inequality: for every $w_0 \in M$ there exist constants $0 < \vartheta < 1$, $C > 0$ such that $|f(w)-f(w_0)|^\vartheta \leq C \|\nabla f(w)\|$ for all $w$ on some neighborhood of~$w_0$. Here we write $\|\cdot\|$ instead of $\|\cdot\|_g$ for simplicity. By compactness of~$\omega(w)$ and a simple covering argument, one finds an open neighborhood~$U_0$ of~$\omega(w)$ and constants $0 < \vartheta < 1$, $C > 0$ such that
\begin{equation}
\label{Lojasiewicz}
|f(w)-c|^\vartheta \leq C\|\nabla f(w)\| \qquad  \forall w\in U_0.
\end{equation}
Let~$U_1$ be an open neighborhood of~$\omega(w)$ such that $|f(w)-c|\leq 1$ for all~$w\in U_1$.
Let $t_0 > 0$ be such that $t\geq t_0 \Rightarrow w(t) \in U := U_0 \cap U_1$.
On~$U$ we can replace~$\vartheta$ in inequality~\eqref{Lojasiewicz} by larger exponents smaller than~$1$.
Set $$ h(t) = f(w(t))-c $$ so that $h(t)>0$ and $h'(t) = -\|\nabla f(w(t))\|^2$. 
One finds $\frac{1}{2} < \beta < 1$ such that
\begin{equation}
\label{diff_ineq_h}
\forall t\geq t_0: \qquad\qquad h(t)^\beta \leq C \sqrt{-h'(t)} \, . 
\end{equation}
Set $$ \epsilon_* = \frac{2(1-\beta)}{2\beta-1} > 0. $$
We claim that
\begin{equation}
\label{decay_h}
\forall 0<\epsilon<\epsilon_*: \qquad  \qquad \lim_{t\to+\infty} t^{1+\epsilon} h(t) = 0.
\end{equation}
We will now prove this claim.
Since $-1 < 1-2\beta <0$, we get from~\eqref{diff_ineq_h} $$ \forall t\geq t_0: \qquad \qquad (1-2\beta)h^{2\beta} \geq C^2(2\beta-1) h' $$
hence
\begin{equation}
\begin{aligned}
(h^{1-2\beta})' 
&= (1-2\beta)h^{-2\beta}h' \\
&= (2\beta-1)h^{-2\beta}(-h') \\
&\geq \frac{2\beta-1}{C^2} h^{-2\beta}h^{2\beta} = \frac{2\beta-1}{C^2}
\end{aligned}
\end{equation}
whenever $t\geq t_0$. It follows that there are constants $\alpha \in (0,+\infty)$, $a\in\R$ such that $h^{1-2\beta} \geq \alpha t + a$ holds for all $t\geq t_0$. The map $g:(0,+\infty) \to (0,+\infty)$, $g(y) = y^{\frac{1}{1-2\beta}}$ is strictly decreasing.
Hence $$ \forall t\geq t_0: \qquad \qquad h(t) = g(h(t)^{1-2\beta}) \leq g(\alpha t+a) = (\alpha t+a)^{\frac{1}{1-2\beta}} $$
and finally we arrive at $h(t)t^{1+\epsilon} \leq (\alpha t+a)^{\frac{1}{1-2\beta}}t^{1+\epsilon} \to 0$ as $t\to+\infty$ since $\epsilon < \epsilon_*$ implies $\frac{1}{1-2\beta} + 1 + \epsilon < 0$. This concludes the proof of~\eqref{decay_h}.

Choose $\epsilon<\epsilon'<\epsilon_*$. 
Note that $\|\dot w(t)\| = \|\nabla f(w(t))\| = \sqrt{-h'(t)}$.
Hence, the square of the length of the curve $w(t)$ is estimated with the help of Cauchy-Schwartz inequality as follows:
\begin{equation*}
\begin{aligned}
\left( \int_0^{+\infty} \|\dot w(t)\| \ dt  \right)^2
&= \left( \int_0^{+\infty} \sqrt{-h'(t)} \ dt \right)^2 \\
&= \left( \int_0^{+\infty} \sqrt{-h'(t)} \ t^{\frac{1+\epsilon}{2}} \ t^{-\frac{1+\epsilon}{2}}\ dt \right)^2 \\
&\leq \left( \int_0^{+\infty} - h'(t) \ t^{1+\epsilon} \ dt \right) \quad \left( \int_0^{+\infty} t^{-(1+\epsilon)} \ dt \right)
\end{aligned}
\end{equation*}
Hence, to prove that $w(t)$ has finite length we integrate by parts and use~\eqref{decay_h}:
\begin{equation*}
\begin{aligned}
\int_0^{+\infty} -h'(t) \ t^{1+\epsilon} \ dt
&= - \lim_{t\to+\infty} h(t) \ t^{1+\epsilon} \ + \ \int_0^{+\infty} h(t) \ (1+\epsilon) t^{\epsilon} \ dt \\
&= (1+\epsilon) \int_0^{+\infty} h(t) \ t^{1+\epsilon'} \ t^{-1-\epsilon'+\epsilon} \ dt \\
&\leq (1+\epsilon) \ \|h(t) \ t^{1+\epsilon'}\|_{L^\infty} \int_0^{+\infty} t^{-1-\epsilon'+\epsilon} \ dt < +\infty
\end{aligned}
\end{equation*}
This concludes the proof that $w(t)$ has finite length.
If $\omega(w)$ contains at least two points then $w(t)$ has infinite length.
Hence $\omega(w)$ consists of precisely one point, as we wanted to show.
\end{proof}




\subsection{Completing the proof}

According to the calculations from subsection~\ref{ssec_blow_up}, the vector field $\DIST(\cdot,x_0)^{2-k}\nabla f$ gets pulled back by $\phi$ to a vector field on $(0,\rho) \times \S^{d-1}$ that has a smooth extension $X$ to $(-\rho,\rho) \times \S^{d-1}$. The flow of~$-X$ is denoted here by~$\psi^t$.

The domain of definition of $\Psi = \psi^1$ is an open set $\mathcal{U} \subset (-\rho,\rho)\times \S^{d-1}$. Hence $\Psi : \mathcal{U}\to (-\rho,\rho)\times \S^{d-1}$ is an embedding onto the open set $\Psi(\mathcal{U})$. The $n$-fold iteration $\Psi^n$ of $\Psi$ is defined on an open set $\mathcal{U}_n$ defined inductively as $\mathcal{U}_1=\mathcal{U}$, $\mathcal{U}_n = \{ q \in \mathcal{U} : \Psi(q) \in \mathcal{U}_{n-1} \} = \Psi^{-1}(\mathcal{U}_{n-1})$. Then $\Psi^n(\mathcal{U}_n)\subset(-\rho,\rho)\times \S^{d-1}$ is open and $\Psi^n$ defines a diffeomorphism of $\mathcal{U}_n$ onto $\Psi^n(\mathcal{U}_n)$. If $(r_0,u_0)\in(-\rho,\rho)\times \S^{d-1}$ is such that $\psi^t(r_0,u_0)$ is well-defined for all $t\geq0$, then $(r_0,u_0)\in \cap_{n\geq1}\mathcal{U}_n$. It follows that the latter set is non-empty since $X$ has zeros, for instance it vanishes on $\{0\}\times\CRIT(p) \neq\emptyset$.

By Lemma~\ref{lemma_C1_extension} the set of zeros of $X$ on $\{0\}\times \S^{d-1}$ is precisely $\{0\}\times \CRIT(p)$. Moreover, by Definition~\ref{defn_weak_saddle} and Lemma~\ref{lemma_C1_extension}, for every $u_* \in \CRIT(p)$ the linearization $-DX(0,u_*)$ of $-X$ at $(0,u_*)$ has an eigenvalue with strictly positive real part.
By Lemma~\ref{lemma_measure_zero_vectorfields_manifolds} for each $u_* \in \CRIT(p)$ the point $(0,u_*)$ has an open neighborhood $B(u_*) \subset (-\rho,\rho)\times \S^{d-1}$ with the following property: If $E(u_*)$ is the set of points $(r_0,u_0)$ such that $\psi^t(r_0,u_0)$ is defined and satisfies $\psi^t(r_0,u_0) \in B(u_*)$ for all $t \geq 0$ then $E(u_*)$ has measure zero. Hence, since $\#\CRIT(p)<\infty$ by assumption, the set
\[
E_* = \bigcup_{u_* \in \CRIT(p)} E(u_*)
\]
has measure zero. Again by the fact that $\#\CRIT(p)<\infty$, it can be assumed that the $B(u_*)$, $u_*\in\CRIT(p)$, are mutually disjoint. Consider 
\[
Z = \bigcup_{n\geq 0} \Psi^{-n}(E_*) = \bigcup_{n\geq 0} \{ w \in \mathcal{U}_n : \Psi^n(w) \in E_* \} \, .
\]
Then $Z$ has measure zero since it is a countable union of sets of measure zero.

We can choose $R_0\in(0,\rho)$, $\delta>0$ such that 
\begin{equation*}
(r,u) \in [0,R_0]\times \S^{d-1} \setminus \bigcup_{u_* \in \CRIT(p)} B(u_*) \ \Rightarrow \ \DIST(u,\CRIT(p)) \geq 2\delta.
\end{equation*} 
Lemma~\ref{lemma_simple_analysis} provides $R\in(0,R_0)$ such that the following holds: If the trajectory $\psi^t(r_0,u_0)=(r(t),u(t))$ is well-defined and belongs to $(0,R]\times \S^{d-1}$ for all $t\geq 0$, then $\omega(u(t))$ is contained in the $\delta$-neighborhood of $\CRIT(p)$. Thus, for such $(r_0,u_0)$ one finds $T>0$ such that 
\begin{equation*}
t\geq T \ \Rightarrow \ \DIST(u(t),\CRIT(p)) \leq \frac{3\delta}{2} \ \Rightarrow \ \psi^t(r_0,u_0) \in \bigcup_{u_* \in \CRIT(p)} B(u_*) \, .
\end{equation*}
In particular, there is a unique $u_*$ such that $\psi^t(r_0,u_0)\in B(u_*)$ for all $t\geq T$. As a consequence, if $(r_0,u_0) \in (0,R]\times \S^{d-1}$ is such that $\psi^t(r_0,u_0)$ is well-defined and belongs to $(0,R]\times \S^{d-1}$ for all $t\geq 0$ then for some $n \in \N$ we have $\Psi^n(r_0,u_0) \in E_*$, in particular $(r_0,u_0)\in Z$. 

The set $W(x_0) = \exp\big(\phi(Z\cap [0,R)\times \S^{d-1})\big)$ is contained in the $R$-ball~$B$ centered at $x_0$ with respect to the metric $g$, and has zero measure. Let $w_0\in M$ be such that $\varphi^t(w_0)$ is well-defined and belongs to $B$ for all $t\geq0$. Then, if $(r_0,u_0)$ satisfies $\phi(r_0,u_0)=w_0$, $\psi^t(r_0,u_0)$ is well-defined and belongs to $(0,R)\times\S^{d-1}$. Above it was proved that $(r_0,u_0)\in Z$. Hence, $w_0 \in W(x_0)$.

The proof of Theorem~\ref{thm_weak_center_stable} is complete.

\appendix

\section{Center-stable manifold theorem and its consequences}

In this appendix we prove the version of the center-stable manifold theorem we need.
Consider a linear isomorphism $T:\R^m \to \R^m$ and its spectrum~$\sigma(T) \subset \C$.
Choose a number~$\mu>1$ such that 
\begin{equation}
\label{number_mu}
\lambda \in \sigma(T) \Rightarrow |\lambda| \not\in (1,\mu],
\end{equation}
and choose $\theta$ and $\rho$ satisfying $1 < \theta < \rho < \mu$.
By suitably modifying the real Jordan form of $T$, or alternatively by a direct application of~\cite[Chapter~III,~subsection~6.4]{Kato}, we find a decomposition of $\R^m$ into $T$-invariant linear subspaces
\begin{equation}
\label{decomposition_Jordan}
\R^m = E_1 \oplus E_2 \qquad \qquad T(E_j) \subset E_j
\end{equation}
such that $\sigma(T|_{E_1}) = \sigma(T) \cap \D$, $\sigma(T|_{E_2}) = \sigma(T) \setminus \sigma(T|_{E_1})$.
This and~\eqref{number_mu} imply that there exist norms on $E_1$ and $E_2$, both denoted as $\|\cdot\|$ with no fear of ambiguity, such that
\begin{equation}
\label{adapted_norms_subspaces}
\|Tx\| < \theta \|x\| \quad \forall x\in E_1, \qquad \qquad  \|Ty\| > \mu\|y\| \quad \forall y\in E_2.
\end{equation}
From now on we identify $\R^m = E_1 \oplus E_2 \simeq E_1 \times E_2$ and equip it with the so-called adapted norm
\begin{equation}
\label{adapted_norm_total_space}
\|z=(x,y)\| = \max \{\|x\|,\|y\|\}.
\end{equation}

\begin{remark}
Let $(X,d)$, $(X',d')$ be metric spaces. 
The Lipschitz constant $\mathrm{Lip}(h) \in [0,+\infty]$ of a map $h:X \to X'$ is defined by
$$
\mathrm{Lip}(h) = \sup \left\{ \frac{d'(h(x_1),h(x_2))}{d(x_1,x_2)} : x_1,x_2\in X, \ x_1\neq x_2 \right\} \, .
$$
Note that $h$ is Lipschitz continuous precisely when $\mathrm{Lip}(h)<+\infty$.
\end{remark}

Let $S_1 = \{z=(x,y) \in \R^m : \|x\|\geq\|y\|\}$.
In the lemma below $\R^m$, $E_1$ and~$E_2$ are considered as metric spaces with the distance induced by the norm~\eqref{adapted_norm_total_space}.

\begin{lemma}[Theorem~III.2 in~\cite{Shub}]
\label{lemma_Shub}
There exists $\eta>0$ depending only on~$T$ with the following significance. If $f:\R^m \to \R^m$ satisfies $f(0)=0$ and $\mathrm{Lip} (f-T) < \eta$ then there exists $g:E_1\to E_2$ such that $\mathrm{Lip}(g) \leq 1$, $g(0)=0$ and such that the set $W_1 := \cap_{n\geq 0} f^{-n}(S_1)$ satisfies 
\begin{equation}
W_1 = \mathrm{Graph}(g) = \{z = (x,y) \in \R^m : y=g(x)\}.
\end{equation}
The set $W_1$ is alternatively characterized as
\begin{equation}
W_1 = \left\{ z \in \R^m \ : \ \sup _{n\geq0} \rho^{-n}\|f^n(z)\| < +\infty \right\}.
\end{equation}
\end{lemma}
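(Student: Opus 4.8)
The plan is to establish the existence of the local invariant ``pseudo-stable'' graph by a contraction mapping argument in a space of Lipschitz graphs over $E_1$, following the classical Hadamard–Perron scheme as in Shub. First I would observe that the hypothesis $\mathrm{Lip}(f-T)<\eta$ means $f$ is a small Lipschitz perturbation of the hyperbolic-type splitting $T = T|_{E_1}\oplus T|_{E_2}$, where $T|_{E_1}$ contracts by a factor $<\theta$ and $T|_{E_2}$ expands by a factor $>\mu$, with $1<\theta<\rho<\mu$. Writing $f(x,y)=(f_1(x,y),f_2(x,y))$ with $f_i:\R^m\to E_i$, the condition $\mathrm{Lip}(f-T)<\eta$ gives that $x\mapsto f_1(x,y)$ and $y\mapsto f_2(x,y)$ have Lipschitz constants close to those of $T|_{E_1}$, $T|_{E_2}$ respectively, and the ``off-diagonal'' dependence ($y\mapsto f_1$, $x\mapsto f_2$) has small Lipschitz constant $O(\eta)$. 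These are the standard ``cone/graph transform'' estimates.

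Next I would introduce the graph transform. Given a map $g:E_1\to E_2$ with $\mathrm{Lip}(g)\le 1$ and $g(0)=0$, its graph $\mathrm{Graph}(g)$ is a Lipschitz section; one defines $\Gamma(g)$ so that $\mathrm{Graph}(\Gamma(g)) = f(\mathrm{Graph}(g))$. The key sublemma is that $\Gamma$ is well-defined on the complete metric space $\mathcal{G} = \{g:E_1\to E_2 : g(0)=0,\ \mathrm{Lip}(g)\le 1\}$ (with the sup metric, or a weighted version thereof) — this requires checking that the first coordinate of $f$ restricted to $\mathrm{Graph}(g)$ is a bi-Lipschitz homeomorphism of $E_1$, so that one can solve for $x$; here smallness of $\eta$ and the spread between $\theta$ and $\mu$ enter. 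Then $\Gamma$ maps $\mathcal{G}$ into itself (the $\mathrm{Lip}\le 1$ bound is preserved because expansion dominates) and is a contraction in the sup metric with factor roughly $\theta/\mu<1$. The Banach fixed point theorem produces the unique fixed point $g$, and $W_1:=\mathrm{Graph}(g)$ is forward-invariant-in-the-appropriate-sense, i.e. $f(W_1)\supset W_1$ in the relevant chart; more precisely $W_1$ is characterized as the set of points whose entire forward orbit stays in $S_1$, which one verifies by a cone-invariance argument: the cone $S_1 = \{\|x\|\ge\|y\|\}$ is mapped by $f^{-1}$ in a controlled way, and $\cap_{n\ge0}f^{-n}(S_1)$ collapses exactly onto the graph.

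Then I would prove the growth-rate characterization $W_1 = \{z : \sup_{n\ge 0}\rho^{-n}\|f^n(z)\| < \infty\}$. For $z\in W_1=\mathrm{Graph}(g)$, since on the graph the dynamics is conjugate (via projection to $E_1$) to the contraction-dominated map $x\mapsto f_1(x,g(x))$, which has Lipschitz constant $<\rho$ for $\eta$ small, one gets $\|f^n(z)\|\le C\rho^n\|z\|$, hence the sup is finite. Conversely, if $z\notin W_1$ then some $f^{N}(z)\notin S_1$, i.e. its $E_2$-component strictly dominates; from that moment the expansion estimate $\|f^{n+1}_2(\cdot)\| > \mu\|f^n_2(\cdot)\| - O(\eta)(\cdots)$ forces $\|f^n(z)\|$ to grow at rate $\ge\mu' > \rho$ for some $\mu'$, so $\rho^{-n}\|f^n(z)\|\to\infty$. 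This ``inclination/expansion'' step is the one needing a little care with the bookkeeping of constants.

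\textbf{Main obstacle.} I expect the main difficulty to be the well-definedness of the graph transform, specifically showing that $x \mapsto f_1(x,g(x))$ is a surjective (indeed bi-Lipschitz) self-map of $E_1$ for every admissible $g$, uniformly in $g$ — this is where one must balance the three inequalities $\mathrm{Lip}(f-T)<\eta$, $\mathrm{Lip}(g)\le 1$, and the gap $1<\theta<\rho<\mu$, and it dictates how small $\eta$ must be chosen (depending only on $T$, as claimed). Everything else — the contraction estimate, the cone characterization, and the exponential-growth dichotomy — is then a routine, if slightly tedious, propagation of these linear estimates through the iteration. Since this is precisely Theorem~III.2 of~\cite{Shub}, in the write-up I would either cite it directly or reproduce the above contraction argument in the few lines it takes; I would lean toward citing, as the statement is quoted verbatim from that reference.
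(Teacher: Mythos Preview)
Your overall strategy is the right one---this is the classical Hadamard--Perron graph transform argument, and it is exactly what the paper does (adapted from Shub). However, there is a genuine direction error in your setup of the graph transform that would make the contraction step fail as written.

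You propose to define $\Gamma(g)$ by $\mathrm{Graph}(\Gamma(g)) = f(\mathrm{Graph}(g))$ and claim $\Gamma$ is a contraction with factor $\approx \theta/\mu$. It is not: since the base $E_1$ is the \emph{slow} direction and the fiber $E_2$ is the \emph{expanding} one, pushing a graph forward by $f$ amplifies vertical separations by a factor $\gtrsim\mu$, while the reparametrization in $E_1$ can help by at most a factor $\theta$. So the forward graph transform over $E_1$ is an \emph{expansion} by $\approx \mu/\theta > 1$. A one-line check in the linear case $f=T$ makes this transparent: there $\Gamma(g)(\xi) = T|_{E_2}\, g(T|_{E_1}^{-1}\xi)$, and in either the sup metric or the weighted metric the distance between $\Gamma(g_1)$ and $\Gamma(g_2)$ is at least $\mu/\theta$ times that between $g_1$ and $g_2$.

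The fix---and what the paper in fact does---is to run the graph transform with $f^{-1}$: define $\Gamma(g)$ by $\mathrm{Graph}(\Gamma(g)) = f^{-1}(\mathrm{Graph}(g))$, i.e.\ $\Gamma(g) = (\alpha_2\circ \mathrm{id}\times g)\circ(\alpha_1\circ \mathrm{id}\times g)^{-1}$ where $(\alpha_1,\alpha_2)=f^{-1}$. Then the same heuristic gives contraction factor $\approx \theta/\mu < 1$, and your ``main obstacle'' becomes the bijectivity of $x\mapsto \alpha_1(x,g(x))$ rather than of $x\mapsto f_1(x,g(x))$. The paper first shows $f$ is a bi-Lipschitz homeomorphism of $\R^m$ (so $f^{-1}$ exists globally with controlled Lipschitz constant), then carries out precisely the steps you outline: contraction of $\Gamma$ in the weighted metric $d(g_1,g_2)=\sup_{x\neq 0}\|g_1(x)-g_2(x)\|/\|x\|$ (the plain sup metric is not finite on all of $E_1$), cone invariance $f(S_2)\subset S_2$ to identify the fixed graph with $\bigcap_{n\geq 0} f^{-n}(S_1)$, and finally the growth dichotomy for the $\rho^{-n}$ characterization. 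Once you reverse the direction of the graph transform, your plan coincides with the paper's proof.
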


The proof is postponed to the end of this appendix.

\begin{remark}
It can be shown that if $f$ is $C^r$ then $g$ is $C^r$, see~\cite[Chapter~III]{Shub}.
It can also be shown that the intersection of the set $W_1$ with a small neighborhood of $0$ depends only on the germ of $f$ at $0$.
These facts are not necessary for the purposes of this paper.
\end{remark}

We now prove some useful corollaries of Lemma~\ref{lemma_Shub}.

\begin{lemma}
\label{lemma_center_stable_local}
Consider a $C^1$ map $f:U \to \R^m$ defined on an open set $U \subset \R^m$ such that $0 \in U$, $f(0)=0$, $f(U)$ is open and $f:U \to f(U)$ is a diffeomorphism.
If $Df(0)$ has an eigenvalue of absolute value strictly larger than~$1$, then there exists an open neighborhood $B \subset U$ of $0$ such that set $$ \left\{ z\in U \ : \  f^n(z)\in B \ \forall n\geq 0 \right \} $$
has Lebesgue measure zero.
\end{lemma}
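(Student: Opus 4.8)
The plan is to deduce the lemma from Lemma~\ref{lemma_Shub} by a standard localization argument. Set $T = Df(0)$, a linear isomorphism of $\R^m$; by hypothesis $T$ has an eigenvalue of absolute value strictly bigger than $1$. Fix $\mu>1$ as in~\eqref{number_mu}, together with $\theta,\rho$, the $T$-invariant splitting $\R^m = E_1\oplus E_2$, and the adapted norm~\eqref{adapted_norm_total_space}. Since no eigenvalue of $T$ has absolute value in $(1,\mu]$, the given eigenvalue of absolute value $>1$ actually has absolute value $>\mu$ and therefore belongs to $\sigma(T|_{E_2})$; consequently $E_2\neq\{0\}$, so $k:=\dim E_1 \leq m-1$. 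This dimension drop is what will eventually force a measure-zero conclusion.

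First I would build a global perturbation of $T$ that agrees with $f$ near $0$. Write $R = f - T$ on a neighborhood of $0$; then $R(0)=0$ and $DR(0)=0$, so for every $\delta>0$ there is $\varepsilon>0$ with $\bar B(0,2\varepsilon)\subset U$, $\|DR\|\leq\delta$ on $\bar B(0,2\varepsilon)$, and hence $\|R(z)\|\leq\delta\|z\|$ there. Fixing a smooth cut-off $\chi:\R^m\to[0,1]$ with $\chi\equiv1$ on the closed unit ball and $\chi\equiv0$ off the ball of radius $2$, set $\chi_\varepsilon(z)=\chi(z/\varepsilon)$ and define
\[
\tilde f(z) = Tz + \chi_\varepsilon(z)\,R(z),
\]
interpreted as $Tz$ where $\chi_\varepsilon$ vanishes. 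Then $\tilde f:\R^m\to\R^m$ satisfies $\tilde f(0)=0$ and $\tilde f = f$ on $B:=B(0,\varepsilon)$. The key estimate is that, on the support of $\chi_\varepsilon$, the factor $\varepsilon^{-1}$ coming from $D\chi_\varepsilon$ is absorbed by the bound $\|R(z)\|\leq 2\delta\varepsilon$, so that $\|D(\tilde f - T)\| \leq (2\sup\|D\chi\|+1)\delta$ everywhere; choosing $\delta$ (hence $\varepsilon$) small enough gives $\LIP(\tilde f - T) < \eta$, with $\eta$ the constant of Lemma~\ref{lemma_Shub}. Making this scaling work is the only real point of the argument.

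Now apply Lemma~\ref{lemma_Shub} to $\tilde f$: there is a Lipschitz map $g:E_1\to E_2$ with $g(0)=0$, $\LIP(g)\leq1$, such that $W_1 := \bigcap_{n\geq0}\tilde f^{-n}(S_1) = \GRAPH(g) = \{z\in\R^m : \sup_{n\geq0}\rho^{-n}\|\tilde f^n(z)\| < +\infty\}$. I claim $\{z\in U : f^n(z)\in B\ \forall n\geq0\}\subset W_1$. Indeed, if $z$ lies in this set then $z\in B$, and since $\tilde f$ and $f$ coincide on $B$ an induction gives that all iterates are defined and $\tilde f^n(z)=f^n(z)\in B$ for all $n$; as $B$ is bounded, $\sup_{n\geq0}\|\tilde f^n(z)\|<+\infty$ and a fortiori $z\in W_1$. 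It remains to note that $W_1=\GRAPH(g)$ is the image of $E_1\cong\R^k$ under the Lipschitz map $x\mapsto(x,g(x))$; since $k\leq m-1$, this image has $m$-dimensional Lebesgue measure zero. Hence the set in the statement has Lebesgue measure zero, as desired.
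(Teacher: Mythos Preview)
Your proof is correct and follows essentially the same approach as the paper's: both localize $f$ to a global map $\tilde f = T + (\text{cut-off})\cdot(f-T)$, verify the Lipschitz smallness of $\tilde f - T$ via the scaling argument that balances $\varepsilon^{-1}$ from the cut-off against the $O(\varepsilon)$ bound on $f-T$, invoke Lemma~\ref{lemma_Shub}, and then observe that orbits trapped in the small ball are bounded $\tilde f$-orbits and hence lie in $W_1=\GRAPH(g)$. You are in fact a bit more explicit than the paper in noting that $E_2\neq\{0\}$ (so $\dim E_1\leq m-1$) is what makes the graph a null set.
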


\begin{proof}
Denote $T=Df(0)$.
Consider a decomposition $\R^m = E_1 \oplus E_2$ as in~\eqref{decomposition_Jordan} into $T$-invariant subspaces, and an adapted norm~$\|\cdot\|$ as in~\eqref{adapted_norm_total_space} satisfying~\eqref{adapted_norms_subspaces} where $1<\theta<\mu$ and $\mu$ satisfies~\eqref{number_mu}.
Choose $\rho \in (\theta,\mu)$.
Fix a smooth bump function $\phi:\R^m \to \R$ such that $\phi$ takes values on $[0,1]$, $\phi$ is identically~$1$ on $B_1(0)$, and $\supp(\phi)$ is a compact subset of $B_2(0)$.
Here $B_r(0) = \{z\in\R^m : \|z\| < r\}$.
For each $n\geq1$ the $n$-fold composition $f^n$ of $f$ has as domain the open subset $U_n \subset U$ defined inductively as $U_1=U$, $U_n = f^{-1}(U_{n-1})=\{ z\in U : f(z) \in U_{n-1}\}$.
Write $f(z) = Tz + h(z)$ where $h$ satisfies $$ \|h(z)\| = o(\|z\|), \ \|Dh(z)\| = o(1) \ \text{as} \ \|z\| \to 0. $$
For all $s>0$ small enough so that $B_{2s}(0) \subset U$ define $f_s:\R^m \to \R^m$ by
\begin{equation}
\begin{aligned} 
& f_s(z) = Tz + \phi(s^{-1}z)h(z) \qquad \text{if $z\in B_{2s}(0)$,} \\
& f_s(z) = Tz \qquad \qquad \qquad \qquad \ \ \text{if $z \not\in B_{2s}(0)$.}
\end{aligned}
\end{equation}
Then $f_s$ is $C^1$ and a direct computation gives
\begin{equation}
\forall z\in B_{2s}(0): \quad Df_s(z) = T + s^{-1}\left<\nabla\phi(s^{-1}z),\cdot\right> h(z) + \phi(s^{-1}z)Dh(z) \, .
\end{equation}
In~$\R^m \setminus B_{2s}(0)$ the map $f_s$ coincides with $T$.
In~$B_{2s}(0)$ we have $\|z\|\leq 2s$ and $$ \|Df_s(z)-T\| = s^{-1} \ o(s) + o(1) = o(1) \ \text{as $s\to0$}. $$
It follows that for $s$ small enough $f_s$ satisfies the assumptions of Lemma~\ref{lemma_Shub}.
Fix such $s>0$.
Let $W_1$ be the set obtained by applying Lemma~\ref{lemma_Shub} to $f_s$.
Note that the Lebesgue measure of $W_1$ is zero since it is the graph of a Lipschitz function.
Let $z\in U$ satisfy $f^n(z)\in B_s(0)$ for all $n\geq 0$.
Then $f^n(z) = f^n_s(z)$ is bounded.
Lemma~\ref{lemma_Shub} implies that $z\in W_1$.
The desired conclusion follows.
\end{proof}

The version of the above lemma for smooth manifolds is an immediate consequence.
Manifolds are here always assumed Hausdorff and second countable.
Recall that a subset $A$ of a smooth manifold is said to have measure zero if for every chart $(U,\varphi)$ the set $\varphi(A\cap U)$ has Lebesgue measure zero.
It follows that countable unions of sets of measure zero have measure zero, subsets of sets of measure zero have measure zero, and sets of measure zero have dense complement.
To check that $A$ has measure zero it suffices to check that for every $p\in A$ there exists a chart $(U,\varphi)$ such that $p\in U$ and $\varphi(A\cap U)$ has Lebesgue measure zero.
Moreover, if $\phi:U\to V$ is a $C^1$ diffeomorphism between open sets $U,V \subset M$ and $A \subset M$ has measure zero then $f^{-1}(A) = \{p\in U:f(p)\in A\}$ has measure zero.

\begin{lemma}
\label{lemma_center_stable_local_manifolds}
Let $M$ be a smooth manifold, $p_0 \in M$, and $f:U \to M$ be a $C^1$ map defined on an open set $U \subset M$ such that $p_0 \in U$, $f(p_0)=p_0$, $f(U)$ is open and $f:U \to f(U)$ is a diffeomorphism.
If $Df(p_0)$ has an eigenvalue of absolute value strictly larger than~$1$, then there exists an open neighborhood $B$ of $p_0$ such that the set $$ \left\{ p\in U \ : \  f^n(p)\in B \ \forall n\geq 0 \right\} $$
has measure zero.
\end{lemma}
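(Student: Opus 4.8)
The plan is to localize the statement to a chart and then invoke Lemma~\ref{lemma_center_stable_local}. First I would choose a smooth chart $(\Omega,\varphi)$ around $p_0$ with $\varphi(p_0)=0$ and $\varphi(\Omega)\subset\R^m$ open. Since $f$ is continuous and $f(p_0)=p_0$, there is an open neighborhood $U'\subset U\cap\Omega$ of $p_0$ with $f(U')\subset\Omega$. Because $f\colon U\to f(U)$ is a diffeomorphism and $U'$ is open, the restriction $f|_{U'}$ is a diffeomorphism onto the open set $f(U')$, so the conjugated map
\[
\tilde f := \varphi\circ f\circ\varphi^{-1}\colon \varphi(U')\to\varphi(\Omega)\subset\R^m
\]
is $C^1$, satisfies $\tilde f(0)=0$, and maps the open set $\varphi(U')$ diffeomorphically onto the open set $\varphi(f(U'))$. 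Its differential at the origin, $D\tilde f(0)=D\varphi(p_0)\,Df(p_0)\,D\varphi(p_0)^{-1}$, is conjugate to $Df(p_0)$, hence has an eigenvalue of absolute value strictly larger than $1$. Thus $\tilde f$ on $\varphi(U')$ meets all the hypotheses of Lemma~\ref{lemma_center_stable_local}.

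Next I would apply Lemma~\ref{lemma_center_stable_local} to $\tilde f$ on $\varphi(U')$: it yields an open neighborhood $\tilde B\subset\varphi(U')$ of $0$ such that the set $\{z\in\varphi(U') : \tilde f^{\,n}(z)\in\tilde B \ \forall n\geq0\}$ has Lebesgue measure zero. I would then define $B=\varphi^{-1}(\tilde B)$, an open neighborhood of $p_0$ contained in $U'\subset U$, and transfer the conclusion back. If $p\in U$ satisfies $f^n(p)\in B$ for all $n\geq0$, then in particular all iterates $f^n(p)$ are well defined and lie in $B\subset U'\subset\Omega$, which is exactly what is needed so that, by induction, $\tilde f^{\,n}(\varphi(p))=\varphi(f^n(p))\in\tilde B$ for every $n\geq0$. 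Hence $\varphi(p)$ lies in the measure-zero set produced above. Since the whole set $\{p\in U : f^n(p)\in B \ \forall n\geq0\}$ is contained in $B\subset\Omega$, applying the chart $\varphi$ exhibits it, within a single chart, as a subset of a Lebesgue-null set; therefore it has measure zero in $M$, as desired.

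There is no essential obstacle here: the argument is a routine localization. The only point requiring a little care is the bookkeeping ensuring the conjugation is legitimate — one must choose $U'$ so that $f(U')\subset\Omega$, and observe that an orbit entirely contained in $B$ keeps every intermediate point inside $U'$, so that the identity $\tilde f^{\,n}=\varphi\circ f^n\circ\varphi^{-1}$ holds on the relevant set and the openness/diffeomorphism hypotheses of Lemma~\ref{lemma_center_stable_local} genuinely apply to $\tilde f$.
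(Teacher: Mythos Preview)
Your proposal is correct and matches the paper's approach: the paper simply states that the manifold version is ``an immediate consequence'' of Lemma~\ref{lemma_center_stable_local}, and your chart-localization argument is exactly the routine verification the paper leaves implicit.
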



For flows we have the following statement.

\begin{lemma}
\label{lemma_measure_zero_vectorfields_manifolds}
Let $X$ be a $C^1$ vector field defined on a smooth manifold $M$. Let $p_0\in M$ satisfy $X(p_0)=0$. If $DX(p_0)$ has an eigenvalue $\lambda$ satisfying $\mathrm{Re} \ \lambda >0$ then there exists an open neighborhood $B \subset M$ of $p_0$ with the following property: the set $E$ of points $p\in M$ such that the solution $\gamma(t)$ of $\dot\gamma=X\circ \gamma$, $\gamma(0)=p$ is defined and satisfies $\gamma(t) \in B$ for all $t\in[0,+\infty)$ has measure zero.
\end{lemma}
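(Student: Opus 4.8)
The plan is to deduce this flow statement from its discrete-time counterpart, Lemma~\ref{lemma_center_stable_local_manifolds}, by passing to the time-one map of the local flow of $X$. First I would invoke standard ODE theory: the equilibrium $p_0$ has an open neighborhood $U \subset M$ on which the time-one flow map $f := \psi^1$ of $X$ is defined, fixes $p_0$, has open image $f(U)$, and restricts to a $C^1$ diffeomorphism $f : U \to f(U)$ (its inverse being a restriction of the time $-1$ map). The key algebraic observation is the identity $Df(p_0) = e^{DX(p_0)}$: differentiating the flow in the initial condition along the constant solution $t \mapsto p_0$ yields the linear variational equation $\frac{d}{dt}D\psi^t(p_0) = DX(p_0)\,D\psi^t(p_0)$ with $D\psi^0(p_0) = \mathrm{id}$, hence $D\psi^t(p_0) = e^{t\,DX(p_0)}$ and in particular $Df(p_0) = e^{DX(p_0)}$. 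By the spectral mapping theorem for matrix exponentials, the hypothesis that $DX(p_0)$ has an eigenvalue $\lambda$ with $\mathrm{Re}\,\lambda > 0$ implies that $Df(p_0)$ has the eigenvalue $e^{\lambda}$, of absolute value $e^{\mathrm{Re}\,\lambda} > 1$.

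Next I would apply Lemma~\ref{lemma_center_stable_local_manifolds} to $f : U \to M$. It produces an open neighborhood $B \subset U$ of $p_0$ such that the set $\{\,p \in U : f^n(p) \in B \ \forall n \geq 0\,\}$ has measure zero. I claim that the set $E$ associated to this same $B$ in the statement of the lemma is contained in this measure-zero set, which then finishes the proof since measure zero is a local notion and $E \subset B$ lies in a single chart domain. Indeed, let $p \in E$, so that $\gamma(t) = \psi^t(p)$ is defined for all $t \geq 0$ and $\gamma(t) \in B$ for all such $t$. Then for every integer $n \geq 0$ one has $f^n(p) = \psi^n(p) = \gamma(n)$, which is defined and lies in $B \subset U$; since $f^k(p) \in U$ for all $0 \leq k \leq n-1$, the point $p$ belongs to the domain of the iterate $f^n$. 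Hence $p \in \{\,p \in U : f^n(p) \in B \ \forall n \geq 0\,\}$, as claimed.

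The argument requires no completeness assumption on $X$: forward completeness of the trajectory through $p$ is already built into the definition of $E$, and that is all one needs to make sense of $f^n(p)$ for $p \in E$. The only point demanding a little care — and the closest thing to an obstacle — is the bookkeeping of domains of definition when translating between the flow and the time-one map: one must choose $U$ small enough that $\psi^1$ is genuinely a diffeomorphism onto an open set there (so that Lemma~\ref{lemma_center_stable_local_manifolds} applies verbatim), and verify, as above, that trajectories starting in $E$ never leave the region where the composite iterates $f^n$ are defined. This is automatic because such trajectories remain in $B \subset U$ for all forward time.
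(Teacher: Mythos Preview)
Your proposal is correct and follows essentially the same route as the paper's proof: pass to the time-one map $f=\varphi^1$, use $Df(p_0)=e^{DX(p_0)}$ to transfer the spectral hypothesis, apply Lemma~\ref{lemma_center_stable_local_manifolds}, and observe the inclusion $E\subset\{p:f^n(p)\in B\ \forall n\geq0\}$. The paper is slightly terser (it takes $U=\mathcal{D}_1$, the full domain of the time-one map, and does not spell out the variational equation), but the argument is the same.
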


\begin{proof}
Standard ODE theory gives an open set $\mathcal{D} \subset \R\times M$ and a $C^1$ map $\varphi : \mathcal{D} \to M$ satisfying the following:
For each $p\in M$ the maximal interval of definition~$J_p$ of the solution $\gamma(t)$ of the initial value problem $\dot\gamma=X\circ \gamma$, $\gamma(0)=p$ satisfies $J_p = \{t\in\R : (t,p) \in \mathcal{D}\}$, and $\gamma(t) = \varphi(t,p)$ holds for all $t\in J_p$.
For each $t\in\R$ denote by $\mathcal{D}_t$ the open set $\{p\in M : (t,p) \in \mathcal{D}\}$, and denote by $\varphi^t:\mathcal{D}_t \to M$ the map $p\mapsto \varphi(t,p)$.
Set $U = \mathcal{D}_1$, $f=\varphi^1:U\to V$.
Let $\lambda$ be an eigenvalue of $DX(p_0)$ such that $\mathrm{Re} \ \lambda>0$.
Then $e^\lambda$ is an eigenvalue of $Df(p_0)$ satisfying $|e^\lambda|>1$.
By Lemma~\ref{lemma_center_stable_local_manifolds} there exists an open neighborhood $B$ of $p_0$ such that the set $E_0$ consisting of points $p\in M$ such that $f^n(p)$ is defined and satisfies $f^n(p) \in B$ for all $n\geq0$ has measure zero.
Use $B$ to define a set $E$ as in the statement.
Then $E \subset E_0$.
The desired conclusion follows.
\end{proof}

We close this appendix with a proof of Lemma~\ref{lemma_Shub} adapted from~\cite{Shub}.

\begin{proof}[Proof of Lemma~\ref{lemma_Shub}]
The proof presented here is based on~\cite[Chapter~III]{Shub}.

Note that $f$ must be Lipschitz since so are $f-T$ and $T$.
We claim that if $2\eta<\|T^{-1}\|^{-1}$ then $f$ is a homeomorphism, $f^{-1}$ is a Lipschitz map and 
\begin{equation}
\label{Lip_f_inverse}
\mathrm{Lip}(f^{-1}) < 2\|T^{-1}\|, \qquad 
\mathrm{Lip}(f^{-1}-T^{-1}) < 2\|T^{-1}\|^2 \ \mathrm{Lip}(f-T).
\end{equation}
The estimate
\begin{equation}
\label{estimate_inverse_f}
\begin{aligned}
\|f(z_2)-f(z_1)\| 
&= \|T(z_2-z_1) + (f-T)(z_2) - (f-T)(z_1)\| \\
&\geq \|T(z_2-z_1)\| - \|(f-T)(z_2) - (f-T)(z_1)\| \\
&\geq (\|T^{-1}\|^{-1}-\eta)\|z_2-z_1\| \\
&> (2\|T^{-1}\|)^{-1} \|z_2-z_1\|
\end{aligned}
\end{equation}
shows that $f$ is injective.
Let $z_0\in \R^m$ and consider $\Phi = id - T^{-1}\circ f + T^{-1}z_0$.
Then $\mathrm{Lip}(\Phi) = \mathrm{Lip}(T^{-1}\circ (T-f)) \leq \|T^{-1}\|\eta<\frac{1}{2}$.
Hence $\Phi$ is a contraction, and its unique fixed point $z$ of $\Phi$ satisfies: $z-T^{-1}f(z)+T^{-1}z_0=z \Rightarrow f(z)=z_0$.
This shows that $f$ is a bijection.
Estimate~\eqref{estimate_inverse_f} gives $\mathrm{Lip}(f^{-1}) < 2\|T^{-1}\|$ which is the first inequality in~\eqref{Lip_f_inverse}.
From $T^{-1}-f^{-1} = T^{-1} \circ (f-T) \circ f^{-1}$ one gets the second inequality in~\eqref{Lip_f_inverse} since Lipschitz constants get multiplied under composition.

Consider the set $S_2 = \{(x,y) \in E_1\times E_2 : \|y\|\geq\|x\|\}$.
From now on denote $\hat\eta = \mathrm{Lip}(f^{-1}-T^{-1})$. 
We write in components
$$ f(x,y) = (f_1(x,y),f_2(x,y)), \qquad f^{-1}(x,y) = (\alpha_1(x,y),\alpha_2(x,y)). $$
according to the identification $\R^m \simeq E_1 \times E_2$.

The set of functions
\begin{equation}
X = \{g:E_1\to E_2:g(0)=0, \ \mathrm{Lip}(g)\leq1\}
\end{equation}
can be equipped with the distance
\begin{equation}
d(g_1,g_2) = \sup_{x\in E_1 \setminus\{0\}} \frac{\|g_2(x)-g_1(x)\|}{\|x\|}.
\end{equation}
It is simple to check that $(X,d)$ is a complete metric space.

The remainder of the proof is split into several steps.

\medskip

\noindent \textbf{Step 0.} For every $g\in X$ the function $\alpha_1 \circ id\times g:E_1 \to E_1$ is a Lipschitz homeomorphism satisfying 
\begin{equation}
\label{Lip_h_inverse}
\mathrm{Lip}((\alpha_1 \circ id\times g)^{-1}) \leq (\theta^{-1}-\hat\eta)^{-1}.
\end{equation}

\medskip

Denote $h=\alpha_1\circ id\times g$.
Then $\mathrm{Lip}(h) \leq \mathrm{Lip}(\alpha_1) \mathrm{Lip}(id\times g) \leq \mathrm{Lip}(f^{-1})<+\infty$.
Thus $h$ is Lipschitz.
If $(x,y) \in S_1$ then 
$$ \|T^{-1}y\| < \mu^{-1}\|y\| \leq \mu^{-1}\|x\| < \theta^{-1}\|x\| < \|T^{-1}x\| \, . $$
Hence
$$ (x,y) \in S_1 \Rightarrow \|T^{-1}(x,y)\| = \max \{\|T^{-1}x\|,\|T^{-1}y\|\} = \|T^{-1}x\| > \theta^{-1}\|x\|. $$ 
Moreover, since $\mathrm{Lip}(g)\leq 1$ we get for all $x_1,x_2\in E_1$
$$ (x_2,g(x_2))-(x_1,g(x_1)) \in S_1, \qquad \|(x_2,g(x_2))-(x_1,g(x_1))\| = \|x_2-x_1\| \, . $$
Using these facts we estimate
\begin{equation}
\label{Lip_cte_inverse_h}
    \begin{aligned}
        & \|h(x_2)-h(x_1)\| \\
        &= \|(h(x_2)-h(x_1),g(h(x_2))-g(h(x_1)))\| \\
        &= \|(h(x_2),g(h(x_2)))-(h(x_1),g(h(x_1)))\| \\
        &= \|f^{-1}(x_2,g(x_2)) - f^{-1}(x_1,g(x_1))\| \\
        &\geq \|T^{-1}(x_2,g(x_2))-T^{-1}(x_1,g(x_1))\| \\
        &\qquad - \|(f^{-1}-T^{-1})(x_2,g(x_2)) - (f^{-1}-T^{-1})(x_1,g(x_1))\| \\
        &> (\theta^{-1} - \hat\eta) \|x_2-x_1\|.
    \end{aligned}
\end{equation}
It follows that $h$ is injective. 
To prove surjectivity of $h$ let $x_0 \in E_1$ be fixed arbitrarily, and consider the map $\Phi:\R^m\to\R^m$, $\Phi=I-T\circ f^{-1} + (Tx_0,0)$.
From $I-T\circ f^{-1} = T \circ (T^{-1}-f^{-1})$ we get $\mathrm{Lip}(\Phi) \leq \|T\|\hat\eta$.
Let $\Lambda:E_1\to\R^m$, $\Lambda(x) = \Phi \circ id\times g$, written in components as $\Lambda=(\Lambda_1,\Lambda_2)$.
Then $\mathrm{Lip}(\Lambda_1) \leq \mathrm{Lip}(\Lambda) \leq \mathrm{Lip}(\Phi) \, \mathrm{Lip}(id\times g) \leq \|T\|\hat\eta$.
If $\hat\eta$ is small enough so that $\|T\|\hat\eta<1$ then $\Lambda_1$ is a contraction and has a unique fixed point $x$.
Hence $$ x=\Lambda_1(x) = x - T\alpha_1(x,g(x)) + Tx_0 \Rightarrow h(x)=x_0 $$ as desired.
We are done showing that $h$ is a Lipschitz map and a homeomorphism. 
Estimate~\eqref{Lip_h_inverse} follows from~\eqref{Lip_cte_inverse_h}

\medskip

\noindent \textbf{Step 1.} There exists a unique $g\in X$ such that $$ \forall x\in E_1 : \qquad f^{-1}(x,g(x)) = (h(x),g(h(x))) $$ where $h:E_1 \to E_1$ is the function defined by $h(x) = \alpha_1(x,g(x))$.

\medskip

The proof is based the so-called graph transform: for any $g\in X$ define
\begin{equation}
\label{graph_transform}
\Gamma(g):E_1\to E_2 \qquad \Gamma(g) = (\alpha_2\circ id\times g) \circ (\alpha_1\circ id\times g)^{-1}
\end{equation}
where $id\times g$ denotes the map $x \mapsto (x,g(x))$. 
By Step 0 the map $\alpha_1\circ id\times g$ is a homeomorphism, hence $\Gamma(g)$ is well-defined.
We get from the identity $\alpha_2(x,g(x)) = T^{-1}g(x) + (f^{-1}-T^{-1})_2(x,g(x))$ that 
$$ 
\begin{aligned}
&\|\alpha_2(x_2,g(x_2)) - \alpha_2(x_1,g(x_1))\| \\
&\leq \|T^{-1}(g(x_2)-g(x_1))\| + \hat\eta \|(x_2-x_1,g(x_2)-g(x_1))\| \\
&< \mu^{-1}\|g(x_2)-g(x_1)\| + \hat\eta \max\{\|x_2-x_1\|,\|g(x_2)-g(x_1)\|\} \\
&\leq (\mu^{-1} + \hat\eta)\|x_2-x_1\|.
\end{aligned}
$$
Hence if $\eta$ is small enough then 
$$ \mathrm{Lip}(\Gamma(g)) \leq \mathrm{Lip}(\alpha_2 \circ id\times g) \ \mathrm{Lip}((\alpha_1\circ id\times g)^{-1}) \leq \frac{\mu^{-1}+\hat\eta}{\theta^{-1}-\hat\eta} <1. $$
We have shown that $\Gamma$ defines a map 
$$
\Gamma:X\to X
$$
and now we prove this map is a contraction in $(X,d)$ provided $\eta$ is small enough so that $\mu^{-1}+2\hat\eta < \theta^{-1} - \hat\eta$.
This is a consequence of the fact that
\begin{equation}
\label{technical_estimate_center_stable_1_inverse}
\forall (x,y) \in S_1: \ \frac{\|\alpha_2(x,y)-\Gamma(g)(\alpha_1(x,y))\|}{\|\alpha_1(x,y)\|} \leq \left( \frac{\mu^{-1}+2\hat\eta}{\theta^{-1} - \hat\eta} \right) \frac{\|y-g(x)\|}{\|x\|}
\end{equation} 
holds for all $g\in X$.
Indeed, substitute $y=\hat g(x)$ in~\eqref{technical_estimate_center_stable_1_inverse} to get
$$
\begin{aligned}
d(\Gamma(\hat g),\Gamma(g)) 
&= \sup_{x\in E_1} \frac{\|\Gamma(\hat g)(x)-\Gamma(g)(x)\|}{\|x\|} \\
&= \sup_{x\in E_1} \frac{\|\Gamma(\hat g)(\alpha_1(x,\hat g(x)))-\Gamma(g)(\alpha_1(x,\hat g(x)))\|}{\|\alpha_1(x,\hat g(x))\|} \\ 
&= \sup_{x\in E_1} \frac{\|\alpha_2(x,\hat g(x))-\Gamma(g)(\alpha_1(x,\hat g(x)))\|}{\|\alpha_1(x,\hat g(x))\|} \\
&\leq \left( \frac{\mu^{-1}+2\hat\eta}{\theta^{-1} - \hat\eta} \right) \sup_{x\in E_1} \frac{\|\hat g(x)-g(x)\|}{\|x\|} 
= \left( \frac{\mu^{-1}+2\hat\eta}{\theta^{-1} - \hat\eta} \right) d(\hat g,g).
\end{aligned}
$$

We now prove~\eqref{technical_estimate_center_stable_1_inverse}. 
Fix $(x,y) \in S_1$ arbitrarily.
From
\begin{equation*}
\begin{aligned}
& f^{-1}(x,y) - f^{-1}(x,g(x)) \\
&= (0,T^{-1}y-T^{-1}g(x)) + (f^{-1}-T^{-1})(x,y) - (f^{-1}-T^{-1})(x,g(x))
\end{aligned}
\end{equation*}
we get
\begin{equation}
\label{estimate_alpha_1_S_1}
\|\alpha_1(x,y)-\alpha_1(x,g(x))\| \leq \hat\eta \|y-g(x)\|
\end{equation}
and
\begin{equation}
\label{estimate_alpha_2_S_1}
\begin{aligned}
&\|\alpha_2(x,y)-\alpha_2(x,g(x))\| \\
&\leq \|T^{-1}(y-g(x)) \| + \|(f^{-1}-T^{-1})(x,y) - (f^{-1}-T^{-1})(x,g(x))\| \\
&< (\mu^{-1}+\hat\eta)\|y-g(x)\|.
\end{aligned}
\end{equation}
We also estimate
\begin{equation}
\label{estimate_alpha_1_S_1_second}
\begin{aligned}
\|\alpha_1(x,y)\| &= \|T^{-1}x + \alpha_1(x,y) - T^{-1}x\| \\
&\geq \|T^{-1}x\| - \|\alpha_1(x,y) - T^{-1}x\| \\
&> \theta^{-1}\|x\| - \hat\eta \|(x,y)\| \\
&= (\theta^{-1} - \hat\eta) \|x\|
\end{aligned}
\end{equation}
Putting~\eqref{estimate_alpha_1_S_1} and~\eqref{estimate_alpha_2_S_1} together gives
\begin{equation}
\begin{aligned}
& \|\alpha_2(x,y)-\Gamma(g)(\alpha_1(x,y))\| \\
&= \|\alpha_2(x,y)-\alpha_2(x,g(x)) + \alpha_2(x,g(x)) -\Gamma(g)(\alpha_1(x,y))\| \\
&= \|\alpha_2(x,y)-\alpha_2(x,g(x)) + \Gamma(g)(\alpha_1(x,g(x))) -\Gamma(g)(\alpha_1(x,y))\| \\
&\leq \|\alpha_2(x,y)-\alpha_2(x,g(x))\| + \mathrm{Lip}(\Gamma(g))\|\alpha_1(x,g(x)) - \alpha_1(x,y)\| \\
&< (\mu^{-1}+2\hat\eta)\|y-g(x)\|.
\end{aligned}
\end{equation}
Combining with~\eqref{estimate_alpha_1_S_1_second}
$$ \frac{\|\alpha_2(x,y)-\Gamma(g)(\alpha_1(x,y))\|}{\|\alpha_1(x,y)\|} \leq \left( \frac{\mu^{-1}+2\hat\eta}{\theta^{-1} - \hat\eta} \right) \frac{\|y-g(x)\|}{\|x\|} $$
which is~\eqref{technical_estimate_center_stable_1_inverse}.

Before moving on the next step note that~\eqref{technical_estimate_center_stable_1_inverse} is equivalent to 
\begin{equation}
\label{technical_estimate_center_stable_1}
(x,y) \in f^{-1}(S_1) \Rightarrow \frac{\|y-\Gamma(g)(x)\|}{\|x\|} \leq \left( \frac{\mu^{-1}+2\hat\eta}{\theta^{-1}-\hat\eta} \right)\frac{\|f_2(x,y)-g(f_1(x,y))\|}{\|f_1(x,y)\|}
\end{equation} 
where $g\in X$ is arbitrary.

\medskip

\noindent \textbf{Step 2.} Let $g$ be given by Step 1. 
Then $\mathrm{Graph}(g) = W_1 := \cap_{n\geq0} f^{-n}(S_1)$.

\medskip

From $\mathrm{Lip}(f-T) < \eta$ we have 
\begin{align*}
\|f_1(x,y)\| = \|Tx + f_1(x,y)-Tx\| < \theta\|x\| + \eta\|(x,y)\|, \\
\|f_2(x,y)\| = \|Ty + f_2(x,y)-Ty\| > \mu \|y\| - \eta\|(x,y)\|.
\end{align*}
Hence, if $(x,y)\in S_2$ then
\begin{align}
\label{estimate_f_1_S_2} \|f_1(x,y)\| < (\theta + \eta) \|y\|, \\
\label{estimate_f_2_S_2} \|f_2(x,y)\| > (\mu - \eta) \|y\|.
\end{align}
It follows that if $\eta$ is small enough so that $\theta + \eta < \rho < \mu - \eta$ then 
\begin{equation}
\label{S_2_forward_invariant}
f(S_2) \subset S_2.
\end{equation}
Assume $\eta$ is small enough so that $$ \frac{\mu^{-1}+2\hat\eta}{\theta^{-1}-\hat\eta} < 1. $$
Let $g$ be given by Step 1. 
It satisfies $\Gamma(g)=g$.
With the help~\eqref{technical_estimate_center_stable_1} we will prove
\begin{equation}
\label{point_not_in_graph}
z=(x,y) \in S_1 \setminus \mathrm{Graph}(g) \Rightarrow \exists n_0\geq 0 : f^{n_0}(z) \in S_2.
\end{equation}
Arguing by contradiction, assume that $(x,y) \in S_1 \setminus \mathrm{Graph}(g)$ satisfies $f^n(x,y) = (f^n_1(x,y),f^n_2(x,y)) \in S_1$ for all $n\in\N$.
By~\eqref{technical_estimate_center_stable_1} 
$$ 
\begin{aligned}
\left( \frac{\mu^{-1}+2\hat\eta}{\theta^{-1}-\hat\eta} \right)^{-n} \frac{\|y-g(x)\|}{\|x\|} 
&< \frac{\|f_2^n(x,y)-g(f_1^n(x,y))\|}{\|f_1^n(x,y)\|} \\
& \leq \frac{\|f_2^n(x,y)\|}{\|f_1^n(x,y)\|} + \frac{\|g(f_1^n(x,y))\|}{\|f_1^n(x,y)\|} \leq \frac{\|f_2^n(x,y)\|}{\|f_1^n(x,y)\|} + 1 . 
\end{aligned} $$
Hence $\|f_2^n(x,y)\| > \|f_1^n(x,y)\|$ if $n$ is large enough.
This contradiction proves~\eqref{point_not_in_graph}.
Together~\eqref{S_2_forward_invariant} and~\eqref{point_not_in_graph} imply that $\cap_{n\geq 0} f^{-n}(S_1) \subset \mathrm{Graph}(g)$.
The inclusion $\mathrm{Graph}(g) \subset \cap_{n\geq 0} f^{-n}(S_1)$ follows from the $f$-invariance of $\mathrm{Graph}(g) \subset S_1$, which is a consequence of $$ \forall x\in E_1: \quad f^{-1}(x,g(x)) = (h(x),g(h(x))), \quad f(x,g(x)) = (h^{-1}(x),g(h^{-1}(x))) $$ where $h = \alpha_1 \circ id\times g$.

\medskip

\noindent \textbf{Step 3.} $z \in W_1 \Leftrightarrow \sup_{n\geq0} \rho^{-n}\|f^n(z)\| < +\infty$.
\medskip

By~\eqref{Lip_h_inverse} and $h(0)=0$ 
$$ 
\begin{aligned}
\rho^{-n}\|f^n(x,g(x))\| 
&= \rho^{-n}\|(h^{-n}(x),g(h^{-n}(x)))\| \\
&= \rho^{-n}\|h^{-n}(x)\| \\
&\leq (\rho (\theta^{-1}-\hat\eta))^{-n}\|x\| \to 0
\end{aligned}
$$
for an arbitrary $x\in E_1$, provided $\eta$ is small enough so that $\theta^{-1}-\hat\eta > \rho^{-1}$.

We now show that 
\begin{equation}
\label{not_in_graph}
(x,y) \not\in \mathrm{Graph}(g) \ \Rightarrow \ \rho^{-n}\|f^n(x,y)\| \to +\infty.
\end{equation}
From $f(S_2) \subset S_2$ and~\eqref{estimate_f_2_S_2}
$$
(x,y) \in S_2 \qquad \Rightarrow \qquad \|f^k(x,y)\| \geq (\mu-\eta)^k \|(x,y)\| \ \forall k\geq 0.
$$
By Step 2, if $(x,y) \not\in \mathrm{Graph}(g)$ then there exists $n_0$ such that $f^{n_0}(x,y) \in S_2$. Hence $f^{n_0+k}(x,y) \in S_2$ for all $k\geq 0$, $$ \rho^{-k}\|f^{n_0+k}(x,y)\| \geq (\rho^{-1}(\mu-\eta))^k \ \|f^{n_0}(x,y)\| \to +\infty \ \text{as} \ k\to+\infty $$ and~\eqref{not_in_graph} follows.
\end{proof}

\section*{Acknowledgements}

We gratefully acknowledge funding by the Deutsche Forschungsgemeinschaft (DFG, German Research Foundation) -- Project number 442047500 through the Collaborative Research Center ``Sparsity and Singular Structures'' (CRC 1481). UH acknowledges support of the FAPERJ Grant ``Professor Visitante'' (203.664/2025), and thanks IMPA (Rio de Janeiro) and Henrique Bursztyn for the hospitality. UH thanks Alberto Abbondandolo for helpful technical discussions.

\printbibliography

\end{document}